\numberwithin{equation}{section}
\begin{document}
\theoremstyle{plain}
\newtheorem{thm}{Theorem}[section]
\newtheorem{lem}[thm]{Lemma}
\newtheorem{cor}[thm]{Corollary}
\newtheorem{cor*}[thm]{Corollary*}
\newtheorem{prop}[thm]{Proposition}
\newtheorem{prop*}[thm]{Proposition*}
\newtheorem{conj}[thm]{Conjecture}
\theoremstyle{definition}
\newtheorem{construction}{Construction}
\newtheorem{notations}[thm]{Notations}
\newtheorem{question}[thm]{Question}
\newtheorem{prob}[thm]{Problem}
\newtheorem{rmk}[thm]{Remark}
\newtheorem{remarks}[thm]{Remarks}
\newtheorem{defn}[thm]{Definition}
\newtheorem{claim}[thm]{Claim}
\newtheorem{assumption}[thm]{Assumption}
\newtheorem{assumptions}[thm]{Assumptions}
\newtheorem{properties}[thm]{Properties}
\newtheorem{exmp}[thm]{Example}
\newtheorem{comments}[thm]{Comments}
\newtheorem{blank}[thm]{}
\newtheorem{observation}[thm]{Observation}
\newtheorem{defn-thm}[thm]{Definition-Theorem}
\newtheorem{defn-lem}[thm]{Definition-Lemma}
\newtheorem*{Setting}{Setting}

\newcommand{\sA}{\mathscr{A}}
\newcommand{\sB}{\mathscr{B}}
\newcommand{\sC}{\mathscr{C}}
\newcommand{\sD}{\mathscr{D}}
\newcommand{\sE}{\mathscr{E}}
\newcommand{\sF}{\mathscr{F}}
\newcommand{\sG}{\mathscr{G}}
\newcommand{\sH}{\mathscr{H}}
\newcommand{\sI}{\mathscr{I}}
\newcommand{\sJ}{\mathscr{J}}
\newcommand{\sK}{\mathscr{K}}
\newcommand{\sL}{\mathscr{L}}
\newcommand{\sM}{\mathscr{M}}
\newcommand{\sN}{\mathscr{N}}
\newcommand{\sO}{\mathscr{O}}
\newcommand{\sP}{\mathscr{P}}
\newcommand{\sQ}{\mathscr{Q}}
\newcommand{\sR}{\mathscr{R}}
\newcommand{\sS}{\mathscr{S}}
\newcommand{\sT}{\mathscr{T}}
\newcommand{\sU}{\mathscr{U}}
\newcommand{\sV}{\mathscr{V}}
\newcommand{\sW}{\mathscr{W}}
\newcommand{\sX}{\mathscr{X}}
\newcommand{\sY}{\mathscr{Y}}
\newcommand{\sZ}{\mathscr{Z}}
\newcommand{\bZ}{\mathbb{Z}}
\newcommand{\bN}{\mathbb{N}}
\newcommand{\bO}{\mathbb{O}}
\newcommand{\bQ}{\mathbb{Q}}
\newcommand{\bC}{\mathbb{C}}
\newcommand{\bR}{\mathbb{R}}
\newcommand{\bH}{\mathbb{H}}
\newcommand{\bD}{\mathbb{D}}
\newcommand{\bE}{\mathbb{E}}
\newcommand{\bV}{\mathbb{V}}
\newcommand{\bfp}{\mathbf{p}}
\newcommand{\bft}{\mathbf{t}}
\newcommand{\bfM}{\mathbf{M}}
\newcommand{\bfN}{\mathbf{N}}
\newcommand{\bfX}{\mathbf{X}}
\newcommand{\bfY}{\mathbf{Y}}
\newcommand{\cH}{\mathcal{H}}
\newcommand{\cV}{\mathcal{V}}
\newcommand{\spec}{\textrm{Spec}}
\newcommand{\dbar}{\bar{\partial}}
\newcommand{\ddbar}{\partial\bar{\partial}}
\newcommand{\redref}{{\color{red}ref}}

\title[$L^2$ representation of Simpson-Mochizuki's prolongation] {$L^2$ representation of Simpson-Mochizuki's prolongation of Higgs bundles and the Kawamata-Viehweg vanishing theorem for semistable parabolic Higgs bundles}


\author[Chen Zhao]{Chen Zhao}
\email{czhao@ustc.edu.cn}
\address{School of Mathematical Sciences,
	University of Science and Technology of China, Hefei, 230026, China}

\begin{abstract}
In this paper, we provide an $L^2$ fine resolution of the prolongation of a nilpotent harmonic bundle in the sense of Simpson-Mochizuki (an analytic analogue of the Kashiwara-Malgrange filtrations). This is the logarithmic analogue of Cattani-Kaplan-Schmid's and Kashiwara-Kawai's results on the $L^2$ interpretation of the intersection complex. As an application, we give an $L^2$-theoretic proof to the Nadel-Kawamata-Viehweg vanishing theorem with coefficients in a nilpotent Higgs bundle.
\end{abstract}

\maketitle
\tableofcontents

\section{Introduction}
\subsection{Main result}
It has been long realized that the (nonabelian) Hodge theory is closely related to the $L^2$-differential forms. Besides the proof of the classical Hodge decomposition theorem using $L^2$ methods, there are many interesting and deep relations between the various extensions of the polarized variation of Hodge structure and the $L^2$-de Rham complexes. For the $L^2$ interpretation of the intersection complex, readers may see  Cattani-Kaplan-Schmid \cite{Cattani_Kaplan_Schmid1987}, Kashiwara-Kawai \cite{Kashiwara_Kawai1987}, E. Looijenga \cite{Looijenga1988}, Saper-Stern \cite{Saper_Stern1990}, S. Zucker \cite{Zucker1979} and Shentu-Zhao \cite{SC2021_CGM}. The main purpose of the present paper is to investigate the relavant problem in the context of the nonabelian Hodge theory. The main result is an $L^2$-fine resolution of the Simpson-Mochizuki's prolongation (an analytic analogue of the Kashiwara-Malgrange filtrations \cite{Kashiwara1983}) of nilpotent Higgs bundles (e.g. polarized complex variation of Hodge structure). Before stating the main result, let us fix some notations.

Let $X$ be a pre-compact open subset of a hermitian manifold $(M,\omega_M)$ and let $D=\cup_{i=1}^l D_i$ be a simple normal crossing divisor on $M$. Let $\sigma_i\in H^0(M,\sO_M(D_i))$ be the defining section of $D_i$ and $h_{i}$ an arbitrary hermitian metric on $\sO_M(D_i)$. We denote $\varphi_i=|\sigma_i|_{h_{i}}$. Let $\phi_P\in C^\infty(X\backslash D)$ such that $\omega_P:=\sqrt{-1}\ddbar\phi_P+\omega_M|_{X\backslash D}$ is a hermitian metric on $X\backslash D$ which has Poincar\'e type growth near $D\cap X$. Let $(E,\theta,h)$ be a nilpotent harmonic bundle on $M\backslash D$ together with $\dbar_E$ its holomorphic structure. For every indices ${\bf a}=(a_1,\dots,a_l)\in\bR^l$, denote $_{\bf a}E$ to be the associated prolongation of $(E,\theta,h)$ in the sense of Simpson \cite{Simpson1990} and Mochizuki \cite{Mochizuki20072,Mochizuki20071}. It is a locally free $\sO_X$-module consisting of the holomorphic local sections $s$ of $E$ such that $|s|_h\lesssim\prod_{i=1}^l\varphi_i^{-a_i-\epsilon}$ for every $\epsilon>0$. The restricted Higgs field $\theta|_{_{\bf a}E}$ has at most logarithmic poles along $D$. We denote by $${\rm Dol}(_{\bf a}E,\theta):= {_{\bf a}}E\to {_{\bf a}}E\otimes\Omega_{X}(\log D)\to\cdots$$
the associated logarithmic Dolbeault complex. The main purpose of the present  paper is to construct a complex of fine sheaves (consisting of certain locally square integrable  differential forms) that is canonically quasi-isomorphic to ${\rm Dol}(_{\bf a}E,\theta)$.

The set of prolongations $\{_{\bf a}E\}_{{\bf a}\in\bR^l}$ forms a parabolic bundle. In particular, the set of jumping indices along each component $D_i$
$$\sJ_{D_i}(E):=\left\{a\in\bR\big|{\rm Gr}^{(i)}_{{\bf a}\uparrow_ia}E:=_{{\bf a}\uparrow_ia}E/\cup_{b<a}{_{{\bf a}\uparrow_ib}}E\neq0,\forall {\bf a}\in\bR^l\right\}$$
is a discrete subset of $\bR$ and $\sJ_{D_i}(E)+\bZ=\sJ_{D_i}(E)$. Here we denote ${\bf a}\uparrow_i c$ by the indices obtained by deleting the $i$th component of $\bm{a}$ and replacing it with the number $c$.

For $N\in \bZ$, denote $h_N({\bf a}):=he^{-N\phi_P+\sum_{i=1}^l2a_i\log\varphi_i}$ to be the modified metric on $E$. Define the operator $D''=\dbar_E+\theta$. Denote by $\sD^{k}_{X,\omega_P}(E,D'',h_N({\bf a}))$ the sheaf of measurable $E$-valued $k$-forms $\alpha$ such that $\alpha$ and $D''\alpha$ are locally square integrable near every point of $X$ with respect to $\omega_P$ and $h_N(\bm{a})$.  Denote  
$$\sD^{\bullet}_{X,\omega_P}(E,D'',h_N({\bf a})):=\sD^{0}_{X,\omega_P}(E,D'',h_N({\bf a}))\stackrel{D''}{\to}\sD^{1}_{X,\omega_P}(E,D'',h_N({\bf a}))\stackrel{D''}{\to}\cdots$$
to be the associated $L^2$-Dolbeault complex. There is a natural inclusion $${\rm Dol}(_{\bf a}E,\theta)\to \sD^{\bullet}_{X,\omega_P}(E,D'',h_N({\bf a}+\bm{\epsilon}))$$ for every $(0,\dots,0)=:\bm{0}<\bm{\epsilon}\in\bR^l$\footnote{By $(a_1,\dots,a_l)<(b_1,\dots,b_l)$ we mean that $a_i<b_i$ for every $i=1,\dots,l$.}.

For every ${\bf a}=(a_1,\dots,a_l)\in\bR^l$, define
$$\sigma_{E,i}(a_i)=\min\{|b-a_i||b\in \sJ_{D_i}(E), b\neq a_i\},\quad\forall 1\leq i\leq l$$
and $\bm{\sigma}_E({\bf a})=(\sigma_{E,1}(a_1),\dots,\sigma_{E,l}(a_l))$.
The main result of the present paper is
\begin{thm}\label{thm_main}
	There is a constant $N_0$,  depending only on $(E,\theta,h)$ and $X$ (independent of $\bm{a}$ and $\bm{\epsilon}$ in the following), such that  the inclusion map $${\rm Dol}( _{{\bf a}}E,\theta)\to \sD^{\bullet}_{X,\omega_P}(E,D'',h_N({\bf a}+\bm{\epsilon}))$$ is a quasi-isomorphism for every
	${\bf a}\in\bR^l$, every $N>N_0$ and every $\bm{0}<\bm{\epsilon}<\bm{\sigma}_E({\bf a})$. 
\end{thm}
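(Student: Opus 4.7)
The plan is to prove the theorem stalk-wise on $X$, since the statement is a local quasi-isomorphism of complexes of sheaves. Off $D$ the metric $\omega_P$ is quasi-isometric to a smooth Hermitian metric and $h_N(\bm{a})$ is mutually bounded with $h$, so there both complexes are fine resolutions of $\mathrm{Dol}(E,\theta)$ by elliptic theory and the inclusion is trivially a quasi-isomorphism. The actual work is therefore at each boundary point $p\in D$. Choosing a polydisk chart $(z_1,\dots,z_n)$ centred at $p$ with $D=\{z_1\cdots z_k=0\}$, the goal becomes to identify, for all sufficiently small polydisks $U\ni p$, the cohomology of $\Gamma(U,\sD^{\bullet}_{X,\omega_P}(E,D'',h_N(\bm{a}+\bm{\epsilon})))$ with that of $\Gamma(U,\mathrm{Dol}(_{\bm{a}}E,\theta))$ via the inclusion.

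The first stage is a \emph{reduction to a model bundle}. Using Simpson's main estimate and Mochizuki's refined norm estimates, I would replace the harmonic metric $h$ on $U\setminus D$, up to mutually bounded changes, by an explicit product model metric built from the parabolic weights $\bm{a}$ and the monodromy weight filtrations $W^{(i)}$ of the commuting nilpotent residues $\mathrm{Res}_i\theta$. Since mutually bounded metrics define equivalent $L^2$-Dolbeault complexes, the computation can be carried out on the model, where $(E,\theta,h^{\mathrm{mod}})$ is a K\"unneth-type tensor product of one-variable logarithmic factors (one for each $z_i$, $i\le k$) and a smooth Dolbeault factor in the remaining variables.

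The second stage is a \emph{mode-by-mode $L^2$-Poincar\'e lemma} for $D''=\dbar_E+\theta$ on the model. For each logarithmic factor I would expand sections in Fourier modes in $\arg z_i$ and Mellin-type modes in $-\log|z_i|$; the $L^2$ condition then reduces to an explicit convergent-integral criterion on the exponents, and $D''$ acts preserving this decomposition up to the off-diagonal nilpotent contribution of $\mathrm{Res}_i\theta$. Mode-by-mode computation of $\ker D''$ and $\mathrm{coker}\, D''$ shows that, once the twist $e^{-N\phi_P}$ shifts the spectrum sufficiently, the higher $L^2$-cohomology vanishes and the degree-zero part is exactly the fibre of $_{\bm{a}}E$. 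The constraint $\bm{0}<\bm{\epsilon}<\bm{\sigma}_E(\bm{a})$ enters here to guarantee that $\bm{a}+\bm{\epsilon}$ stays strictly between two consecutive jumping indices, so the $L^2$-threshold selects $_{\bm{a}}E$ and not a neighbouring piece of the parabolic filtration. A standard spectral sequence assembles the K\"unneth factors into the quasi-isomorphism with $\mathrm{Dol}(_{\bm{a}}E,\theta)$.

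The hardest part is controlling the uniformity of $N_0$ in $\bm{a}$. The nilpotent part of $\theta$ couples distinct parabolic weight pieces, so the computation cannot be carried out one weight at a time; one must arrange that the spectral shift induced by $e^{-N\phi_P}$ dominates all off-diagonal nilpotent terms simultaneously, uniformly over $\bm{a}$. This uniformity is expected to follow from the discreteness and $\bZ$-periodicity of each $\sJ_{D_i}(E)$ together with the pre-compactness of $X$ in $M$: these reduce the verification to finitely many local archetypes on $\overline{X}$, from which $N_0$ can be extracted uniformly.
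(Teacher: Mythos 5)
Your outline splits into two stages: (i) identify the degree-zero part (the sheaf of $L^2$-bounded holomorphic sections) with $_{\bm a}E$ via the Simpson--Mochizuki norm estimates and an explicit integrability criterion on the exponents; and (ii) kill the higher cohomology of the weighted $L^2$ complex by a ``mode-by-mode $L^2$-Poincar\'e lemma'' on a K\"unneth-type model. Stage (i) is essentially identical to what the paper does (its Proposition 4.14 combined with the elementary Lemma 4.13), including the role of the constraint $\bm 0<\bm\epsilon<\bm\sigma_E(\bm a)$, and is sound. The genuine gap is in stage (ii), which is asserted rather than proved, and which is exactly the hard analytic core of the theorem. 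Two specific problems. First, replacing $h$ by a quasi-isometric model metric changes the $L^2$ norms but not the operators: $\dbar_E$ and $\theta$ are fixed, and they do \emph{not} decompose as a tensor product of one-variable logarithmic factors; the off-diagonal terms of the holomorphic structure and of the nilpotent part of $\theta$ relative to the parabolic/weight gradings survive and couple your Fourier--Mellin modes across weights, so the ``mode-by-mode computation of $\ker D''$ and $\mathrm{coker}\,D''$'' is not actually a mode-by-mode computation. Second, controlling these couplings in several variables with commuting nilpotent residues is precisely the content of the long Cattani--Kaplan--Schmid and Kashiwara--Kawai arguments in the intersection-complex setting; you give no estimate showing that the spectral shift from $e^{-N\phi_P}$ dominates them, only the statement that this ``is expected to follow.'' As written, the higher-degree exactness --- hence the quasi-isomorphism --- is not established.

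For comparison, the paper sidesteps the harmonic analysis entirely: since $|\theta|_{h,\omega_P}$ is bounded (Simpson/Mochizuki) and $R(h)=-[\theta,\theta_h^\ast]$, the curvature of $h$ is bounded with respect to $\omega_P$; the weight $\sum_i a_i\log\varphi_i$ is pluriharmonic off $D$ and contributes no curvature at all (which is the real reason $N_0$ is independent of $\bm a$ --- no appeal to discreteness or $\bZ$-periodicity of $\sJ_{D_i}(E)$ is needed), so for $N>N_0$ the twisted metric is uniformly Nakano-positive and Demailly's H\"ormander-type existence theorem on the complete-K\"ahler punctured polydisk solves the $\dbar$-equation in every bidegree $(p,q)$ with $q\ge 1$. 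This gives column-exactness of the $(\dbar,\theta)$ double complex, reducing everything to the degree-zero identification of stage (i). If you want to salvage your route, you would have to either carry out the CKS/Kashiwara--Kawai-type estimates with the nilpotent couplings in full, or replace stage (ii) by a curvature-positivity argument of this kind.
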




\subsection{Application: vanishing theorems}
Besides its own insterests, Theorem \ref{thm_main} allows us to prove the following Kawamata-Viehweg type vanishing theorem for stable parabolic higgs bundles. 
\begin{thm}[=Theorem \ref{thm_main_relative}]\label{thm_main1}
Let $X$ be a  projective manifold of dimension $n$, $Y$ be an analytic space and $f:X\rightarrow Y$ be a proper surjective holomorphic map. Let $D=\sum_{i=1}^l D_i$ be a normal crossing divisor on $X$ and $(\{{_\alpha}E\},\theta,h)$ a stable parabolic higgs bundles on $X\backslash D$ where $\theta$ is nilpotent. Let $L$ be a holomorphic line bundle on $X$.
Suppose that some positive multiple $mL=A+F_1+F_2$ where $A$ is an ample line bundle, $F_1$ and $F_2$ are effective divisors supported in $D$ such that $A+F_1$ is nef. Suppose that $F_2=\sum_{i=1}^l r_i D_i$ with $r_i\in \bZ_{\geq 0},\forall i$. Denote ${\bf{r}}:=(r_1,\dots,r_l)$.

Then
\begin{align}\label{align_main_KVvanishing}
	R^if_\ast({\rm Dol}(_{-\frac{\bf{r}}{m}+{\bf a}}E,\theta)\otimes L)=0
\end{align}
for any $i>n$ and any ${\bf a}=(a_1,\dots,a_l)\in \bR^l$ such that $|a_j|<\sigma_{E,j}(-\frac{r_j}{m})$ for every $j=1,\dots,l$.
\end{thm}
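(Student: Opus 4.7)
The plan is to translate the vanishing into an $L^2$-cohomology vanishing via Theorem \ref{thm_main}, and then apply Simpson's Bochner-Kodaira-Nakano identity for harmonic Higgs bundles twisted by a suitably positive line bundle. \textbf{Step 1 (harmonic metric).} By Mochizuki's Kobayashi-Hitchin correspondence for stable parabolic Higgs bundles with nilpotent Higgs field, the given parabolic structure arises from a tame harmonic metric $h$ on $E|_{X\backslash D}$. In particular $(E,\theta,h)$ is a nilpotent harmonic bundle and Theorem \ref{thm_main} applies.

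\textbf{Step 2 (singular metric on $L$).} Using $mL=A+F_1+F_2$, equip $A$ with a smooth strictly positively curved metric $h_A$; equip $A+F_1$ with a metric $h_{A+F_1}$ whose curvature current is $\geq -\eta\omega_M$ (possible by nefness, for any small $\eta>0$); and let $h_{F_2}$ be the canonical singular metric on $\sO(F_2)$ whose local weight near $D_i$ is $\varphi_i^{-2r_i}$. Set
\[
h_L:=\bigl(h_A^{\eta}\otimes h_{A+F_1}\otimes h_{F_2}\bigr)^{1/(m+\eta)}.
\]
Its curvature $\sqrt{-1}\Theta(L,h_L)$ is a closed positive $(1,1)$-current, strictly positive on $X\backslash F_2$ by ampleness of $A$, and whose singular part along $D_i$ contributes a weight asymptotic to $\varphi_i^{-2r_i/m}$ as $\eta\to 0$.

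\textbf{Step 3 ($L^2$-representation).} For $\mathbf{a}$ with $|a_j|<\sigma_{E,j}(-r_j/m)$, set $\mathbf{a}':=-\mathbf{r}/m+\mathbf{a}$; the hypothesis ensures $\sigma_{E,j}(a'_j)>0$, so a $\bm{0}<\bm{\epsilon}<\bm{\sigma}_E(\mathbf{a}')$ exists. Tensoring Theorem \ref{thm_main} with $L$ yields a quasi-isomorphism of fine-sheaf complexes
\[
{\rm Dol}(_{\mathbf{a}'}E,\theta)\otimes L \;\xrightarrow{\sim}\; \sD^\bullet_{X,\omega_P}\bigl(E,D'',h_N(\mathbf{a}'+\bm{\epsilon})\bigr)\otimes L
\]
for every $N>N_0$. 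Since $f$ is proper, for $y\in Y$ the stalk $(R^if_*(\cdots))_y$ is then computed by the $i$-th $L^2$-cohomology of $(E\otimes L)$-valued forms over $f^{-1}(U)\backslash D$, $U\ni y$ a small Stein open, with respect to the product metric $h_N(\mathbf{a}'+\bm{\epsilon})\otimes h_L$ and the complete Poincar\'e-type metric $\omega_P$. Now invoke the Bochner-Kodaira-Nakano identity for the harmonic Higgs bundle $(E\otimes L,\theta,h\otimes h_L)$: by the harmonic equation, the curvature terms involving $(E,h)$ together with $[\theta,\theta^*]$ cancel in pairs, leaving $[\sqrt{-1}\Theta(L,h_L),\Lambda_{\omega_P}]$ as the net curvature operator. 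Its strict positivity on $(p,q)$-forms with $p+q>n$ (inherited from the Nakano positivity built in Step 2), combined with a H\"ormander--Demailly $L^2$-existence argument valid on $(X\backslash D,\omega_P)$ because $\omega_P$ is complete, produces a $D''$-primitive for every $D''$-closed $L^2$-form of total degree $>n$. This gives the asserted vanishing.

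\textbf{Main obstacle.} The delicate step is the $L^2$-existence argument, where one must simultaneously control (a) the Poincar\'e-type singularity of $\omega_P$ near $D$, (b) the singular weight of $h_L$ concentrated on $F_2\subset D$, and (c) the tame but unbounded behavior of $[\theta,\theta^*]$ near $D$ coming from the residue structure of $\theta$. The role of the large parameter $N$ in $h_N$ is precisely to absorb (a) and (c) into the positive contributions from (b) and from the ampleness of $A$; a regularization argument for $h_{A+F_1}$ (letting $\eta\to 0$) is needed to upgrade the distributional curvature estimate to one useable pointwise. The hypothesis $|a_j|<\sigma_{E,j}(-r_j/m)$ enters precisely to allow a compatible $\bm{\epsilon}$ in Theorem \ref{thm_main}, reflecting the fact that crossing a jumping index of the parabolic filtration would alter the prolongation.
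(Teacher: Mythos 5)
Your overall architecture matches the paper's: Mochizuki's correspondence to produce a tame harmonic metric, a singular metric on $L$ interpolating between the ample and nef pieces of $mL$, the $L^2$ resolution of Theorem \ref{thm_main} to identify $R^if_*$ with an $L^2$-cohomology, and a H\"ormander-type estimate for the $D''$-operator (Deng--Hao, Proposition \ref{prop_estimate}). But there is a genuine gap at the curvature step. You claim that $[\sqrt{-1}\Theta(L,h_L),\Lambda_{\omega_P}]$ is strictly positive on $(p,q)$-forms with $p+q>n$. The curvature of your $h_L$ is bounded below only by a small multiple of the \emph{smooth} metric $\omega_M$; near $D$ its eigenvalues $\lambda_1\leq\cdots\leq\lambda_n$ with respect to the Poincar\'e metric $\omega_P$ degenerate and become wildly unequal, and for $p<n$, $q<n$, $p+q>n$ the relevant quantity $\sum_{i\leq p}\lambda_i+\sum_{j\leq q}\lambda_j-\sum_{i=1}^n\lambda_i$ can then be negative (e.g. $p=n-1$, $q=2$ gives $\lambda_1+\lambda_2-\lambda_n$). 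So the operator is not uniformly positive in all bidegrees of total degree $>n$, and your H\"ormander step does not close. The paper's fix (Lemma \ref{lem_modifiedmetric} and Proposition \ref{prop_positivecurvature}) is to run the estimate not against $\omega_P$ but against $\omega_N(\mathbf a)=\frac{\delta\epsilon}{2mn}\omega+\sqrt{-1}R(h_{\epsilon,\delta}(\mathbf a))$, essentially the curvature form of the twisting metric itself (which is of Poincar\'e type because of the $(-\log|\sigma_i|^2)^N$ factor); then every eigenvalue lies in $[\tfrac{2n-1}{2n},1)$ and the positivity with constant $\tfrac12$ follows for all $p+q>n$.

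A second gap is the completeness claim in the relative setting: $\omega_P$ is complete on $X\setminus D$ when $X$ is compact, but not on $f^{-1}(U)\setminus D$ for a small Stein $U\subset Y$, which is exactly where you must solve the $\widetilde{D''}$-equation to kill the stalk of $R^if_*$. The paper first proves the vanishing on weakly pseudoconvex sublevel sets $X_c$ (Theorem \ref{thm_main_KV}), achieving completeness there via the conformal factor $e^{-\chi(\psi_c^2)}$ with $\psi_c=\psi+\frac{1}{c-\psi}$ (Demailly's device), checks the solution stays $L^2$ for the unmodified metric, and only then exhausts $f^{-1}(y)$. Finally, a smaller slip: $h_L=(h_A^{\eta}\otimes h_{A+F_1}\otimes h_{F_2})^{1/(m+\eta)}$ is a metric on $\frac{1}{m+\eta}((1+\eta)A+F_1+F_2)$, not on $L$, unless $(m-1)A=F_1+F_2$; the correct convex combination is the paper's $\varphi_L=\frac1m\bigl((1-\delta)\varphi_{\epsilon}+\delta(\varphi_A+\varphi_{F_1})+\varphi_{F_2}\bigr)$, and the extra singular weight $\delta\varphi_{F_1}/m$ shifts the prolongation index by $\delta\bm{\alpha}/m$, which must be absorbed by taking $\delta$ small relative to the gaps $\sigma_{E,j}$ (Proposition \ref{prop_quasiiso}) --- a point your Step 3 does not address.
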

\begin{rmk}
	The nilpotentness condition in Theorem \ref{thm_main1} is not necessary. By using the trick of \cite{AHL2019} we could deduce the relavant vanishing results for an arbitrary tame harmonic bundle to Theorem \ref{thm_main1}.
\end{rmk}
Theorem \ref{thm_main1} generalizes several recent vanishing results. When $Y$ is a point and $L$ is ample, Theorem \ref{thm_main1} is proved by Arapura-Hao-Li \cite{AHL2019} and Deng-Hao \cite{DF2021}, which extends Arapura's logarithmic Saito-Kodaira vanishing theorem for complex polarized variations of Hodge structure with unipotency condition in \cite{Arapura2019}. When $Y$ is a point, ${\bm a}={\bm 0}$ and $F_2=0$, Theorem \ref{thm_main1} implies J. Suh's vanishing theorem for the canonical extension of a polarizable variation of Hodge structure \cite{Suh2018} which provides many interesting vanishing results on Shimura varieties. We will come back to this issue in \S \ref{section_Saito_vanishing}.

The most interesting phenomenon in Theorem \ref{thm_main1}, compared to other vanishing theorems, is the appearance of the parameter ${\bf a}\in \bR^l$. There may exist more than one vanishing result in Theorem \ref{thm_main1} depending on whether $-\frac{r_i}{m}\in\sJ_{D_i}(E)$ for some $i\in\{1,\dots,l\}$.
Assume that $$\sJ_{D_i}(E)=\{\cdots <c_{i,j-1}<c_{i,j}<c_{i,j+1}<\cdots\},\quad\forall 1\leq i\leq l.$$ 
For every  $1\leq i\leq l$, we denote 
$$\sJ_{E,D_i}(b)=\begin{cases}
	\{c_{i,j-1},c_{i,j}\}, & b=c_{i,j} \\
	\{c_{i,j}\}, & c_{i,j}<b<c_{i,j+1}.
\end{cases}$$
Set $$\sJ_{E,D}({\bf a})=\prod_{i=1}^l \sJ_{E,D_i}(a_i)$$ for each ${\bf a}=(a_1,\dots,a_l)\in \bR^l$. The vanishing result (\ref{align_main_KVvanishing}) is equivalent to the family of vanishing results
\begin{align}
	R^if_\ast({\rm Dol}(_{{\bf a}}E,\theta)\otimes L)=0,\quad \forall i>n,\quad {\bf a}\in \sJ_{E,D}(-\frac{\bf{r}}{m}).
\end{align}
\subsubsection{Kawamata-Viehweg vanishing theorem and its $(i,j)$ version}
Notations as in Theorem \ref{thm_main1}.
For an easy example, let us consider the trivial Higgs bundle $\sO_{X\backslash D}$ with vanishing Higgs field. In this case $\sJ_{D_i}(\sO_{X\setminus D})=\bZ$ for every $i=1,\dots,l$. Hence 
\begin{align}\label{align_KV_indices}
\sJ_{E,D_i}(-\frac{r_i}{m})=\begin{cases}
\{-\frac{r_i}{m}-1,-\frac{r_i}{m}\}, & \frac{r_i}{m}\in\bZ \\
\{\lfloor-\frac{r_i}{m}\rfloor\} & \frac{r_i}{m}\notin\bZ
\end{cases}.
\end{align}
Let $B$ be a big and nef $\bQ$-divisor such that $L=\lceil B\rceil$ and $F_2=\lceil B\rceil-B$. In this case $0\leq\frac{r_i}{m}<1$ for every $i=1,\dots,l$.
Since
$${\rm Dol}({_{\bf b}}E,\theta)\simeq\bigoplus_{j=0}^n\Omega^{j}_{X}(\log D)(\sum_{i=1}^l \lfloor b_i\rfloor D_i)[-j],\quad\forall\bm{b}=(b_1,\dots,b_l)\in\bR^l,$$
we obtain the following Kawamata-Viehweg type vanishing theorem.
\begin{cor}\label{cor_KV_pq}
	Let $X$ be a  projective manifold of dimension $n$, $Y$ be an analytic space and $f:X\rightarrow Y$ be a proper surjective holomorphic map. Let $D=\sum_{i=1}^l D_i$ be a normal crossing divisor on $X$. Let 
	$B=A+F$ be a $\bQ$-divisor where $B$ is nef, $A$ is an ample $\bQ$-divisor, ${\rm supp}(F)\subset D$ and ${\rm supp}(\lceil B\rceil-B)\subset D$. 
	Then
	\begin{align}\label{align_rel_KV}
	R^if_\ast(\Omega^{j}_{X}(\log D)\otimes{\sO_X}(\lceil B\rceil-G))=0,\quad \forall i+j>n
	\end{align}
	for every reduced effective divisor $G$ such that ${\rm supp}(\lceil B\rceil-B)\subset G\subset D$. 
\end{cor}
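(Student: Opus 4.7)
The plan is to derive Corollary \ref{cor_KV_pq} as a direct consequence of Theorem \ref{thm_main1} applied to the trivial nilpotent Higgs bundle $(\sO_{X\setminus D},0)$ together with the line bundle $L:=\sO_X(\lceil B\rceil)$, with the parabolic parameter $\mathbf{a}$ chosen so as to produce the twist by $-G$.

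First I would record the two facts about $(\sO_{X\setminus D},0)$ already singled out in the excerpt: the jumping indices are $\sJ_{D_i}(\sO)=\bZ$ for every $i$, and the logarithmic Dolbeault complex has vanishing differential, so
$${\rm Dol}({_{\mathbf{b}}}\sO,0)\simeq\bigoplus_{j=0}^n\Omega^j_X(\log D)\Big(\sum_{i=1}^l\lfloor b_i\rfloor D_i\Big)[-j].$$
Next I would fix a positive integer $m$ divisible enough that $mA$ and $mB$ are Cartier, and set $r_i:=m(\lceil b_i\rceil-b_i)\in\{0,\ldots,m-1\}$, where $b_i$ denotes the coefficient of $D_i$ in $B$. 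Then
$$m\lceil B\rceil=mB+\sum_{i=1}^l r_i D_i=(mA+mF)+\sum_{i=1}^l r_iD_i$$
realizes the decomposition $mL=A_0+F_1+F_2$ required by Theorem \ref{thm_main1}, with ample part $A_0=mA$, effective parts $F_1=mF$ and $F_2=\sum_i r_iD_i$ supported in $D$, and $A_0+F_1=mB$ nef. (If $F$ happens to have negative components, one first invokes the Kodaira lemma to replace $(A,F)$ by a pair $(A',F')$ with $B\sim_{\bQ}A'+F'$, $A'$ ample and $F'$ effective supported in $D$.)

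With these inputs in place, Theorem \ref{thm_main1} yields the vanishing of $R^if_*({\rm Dol}({_{-\mathbf{r}/m+\mathbf{a}}}\sO,0)\otimes L)$ for all $i>n$ and all $\mathbf{a}$ satisfying $|a_j|<\sigma_{\sO,j}(-r_j/m)$. A direct computation gives $\sigma_{\sO,j}(-r_j/m)=1$ when $r_j=0$ and $\sigma_{\sO,j}(-r_j/m)=\min(r_j/m,1-r_j/m)$ otherwise, so one may choose $a_j$ slightly negative when $D_j\subset G$ and slightly positive (or zero) when $D_j\not\subset G$; the hypothesis $\mathrm{supp}(\lceil B\rceil-B)\subset G$ ensures that whenever $r_j\neq 0$ the component $D_j$ already lies in $G$, so the forced value $\lfloor-r_j/m+a_j\rfloor=-1$ in that case is compatible with the choice. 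The net effect is $\lfloor-r_j/m+a_j\rfloor=-1$ if $D_j\subset G$ and $0$ otherwise, which when substituted into the splitting of ${\rm Dol}({_{\mathbf{b}}}\sO,0)$ translates the theorem's vanishing into the stated $R^pf_*(\Omega^q_X(\log D)(\lceil B\rceil-G))=0$ for all $p+q>n$. The only delicate point is securing the ample-plus-effective decomposition of $mB$; once that is available, the remainder is purely bookkeeping with the parabolic parameter $\mathbf{a}$.
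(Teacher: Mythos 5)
Your proposal is correct and follows the paper's own route: apply Theorem \ref{thm_main1} to the trivial nilpotent Higgs bundle $(\sO_{X\setminus D},0)$ with $L=\sO_X(\lceil B\rceil)$, $F_1=mF$, $F_2=m(\lceil B\rceil-B)$, use that $\sJ_{D_i}(\sO)=\bZ$, and exploit the splitting ${\rm Dol}({_{\bf b}}\sO,0)\simeq\bigoplus_j\Omega^j_X(\log D)(\sum_i\lfloor b_i\rfloor D_i)[-j]$ so that the admissible choices of the parameter give exactly the twists $-G$ with ${\rm supp}(\lceil B\rceil-B)\subset G\subset D$. The only cosmetic difference is that you realize the choice of $G$ by a small real perturbation ${\bf a}$ within the bound $|a_j|<\sigma_{\sO,j}(-r_j/m)$ rather than via the paper's reformulation through the integer indices in $\sJ_{E,D}(-{\bf r}/m)$; your parenthetical appeal to Kodaira's lemma is superfluous under the intended reading that $F$ is effective (and on its own would not produce an $F'$ supported in $D$).
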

When $j=n$, by standard arguments Corollary \ref{cor_KV_pq} implies the classical Kawamata-Viehweg vanishing theorem for $\bQ$-divisors \cite[Theorem 2.64]{Mori1998} which is widely used in birational geometry. 
When $B$ is ample and $Y$ is algebraic, (\ref{align_rel_KV}) has been recently  proved by Arapura-Matsuki-Patel-W{\l}odarczyk \cite{Arapura2018}. 
\subsubsection{Saito-type vanishing theorems}\label{section_Saito_vanishing}
Notations as in Theorem \ref{thm_main1}.
Assume that $(E,\theta,h)$ is associated with a polarized $\bC$-variation of Hodge structure $(\cV,\nabla,\{F^\bullet\},h)$ via Simpson's correspondence \cite{Simpson1988}. For every ${\bm{a}}=(a_1,\dots,a_l)\in\bR^l$, let $\cV_{\geq\bf{a}}$ be the unique locally free $\sO_X$-module extending $\cV$ such that $\nabla$ induces a connection with logarithmic singularities 
$\nabla:\cV_{\geq{\bm{a}}}\to\cV_{\geq{\bm{a}}}\otimes\Omega_X(\log D)$ whose real parts of the eigenvalues of the residue of $\nabla$ along $D_i$ belong to $[a_i,a_i+1)$. Let $j:X\backslash D\to X$ be the open immersion.
Denote $F^p_{\geq{\bm{a}}}:=j_\ast F^p\cap\cV_{\geq{\bm{a}}}$.
The recent work of Deng \cite{Deng2022} shows that $F^{p}_{\geq{\bm{a}}}$ is a subbundle of $\cV_{\geq{\bm{a}}}$ and $\nabla$ induces a complex
$$F^{p}_{\geq{\bm{a}}}\stackrel{\nabla}{\to} F^{p-1}_{\geq{\bm{a}}}\otimes\Omega_{X}(\log D)\stackrel{\nabla}{\to}F^{p-2}_{\geq{\bm{a}}}\otimes\Omega^2_{X}(\log D)\stackrel{\nabla}{\to}\cdots,\quad\forall p.$$
Denote the graded quotient complex as 
$$_{\bf a}{\rm DR}_{(X,D)}(\cV,F^\bullet):=\bigoplus_p F^{p}_{\geq{\bm{a}}}/F^{p+1}_{\geq{\bm{a}}}\stackrel{{\rm Gr}\nabla}{\to}\bigoplus_p F^{p-1}_{\geq{\bm{a}}}/F^{p}_{\geq{\bm{a}}}\otimes\Omega_{X}(\log D)\stackrel{{\rm Gr}\nabla}{\to}\cdots.$$
By \cite{Simpson1988} one has $E=Gr_{\sF}(\cV)$ and $\theta=Gr_{\sF}(\nabla)$.
We obtain that 
\begin{align}
	_{\bf a}{\rm DR}_{(X,D)}(\cV,F^\bullet)\simeq{\rm Dol}(_{\bm{a}}E,\theta),\quad\forall \bm{a}\in\bR^l.
\end{align}
Thus Theorem \ref{thm_main1} implies the following Saito-type vanishing result. 
	\begin{cor}\label{cor_main_suh}
	Let $X$ be a  projective manifold of dimension $n$, $Y$ be an analytic space and $f:X\rightarrow Y$ be a proper surjective holomorphic map. Let $D=\sum_{i=1}^l D_i$ be a normal crossing divisor on $X$. Let $(\cV,\nabla,\{F^\bullet\},h)$ be a polarizable $\bC$-variation of Hodge structure on $X\backslash D$. Let $L$ be a holomorphic line bundle on $X$.
	Suppose that some positive multiple $mL=A+F_1+F_2$ where $A$ is an ample line bundle, $F_1$ and $F_2$ are effective divisors supported in $D$ such that $A+F_1$ is a nef holomorphic line bundle. Suppose that $F_2=\sum_{i=1}^l r_i D_i$ with $r_i\in \bZ_{\geq 0},\forall i$. Denote ${\bf{r}}:=(r_1,\dots,r_l)$.
	
	Then
	$$R^if_\ast\left(_{-\frac{\bf{r}}{m}+{\bf a}}{\rm DR}_{(X,D)}(\cV,F^\bullet)\otimes L\right)=0$$
for any $i>n$ and any ${\bf a}=(a_1,\dots,a_l)\in \bR^l$ such that $|a_j|<\sigma_{E,j}(-\frac{r_j}{m})$ for every $j=1,\dots,l$.
\end{cor}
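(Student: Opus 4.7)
The strategy is to reduce Corollary \ref{cor_main_suh} directly to Theorem \ref{thm_main1} via Simpson's correspondence, combined with the quasi-isomorphism $_{\bf a}{\rm DR}_{(X,D)}(\cV,F^\bullet)\simeq {\rm Dol}(_{\bm{a}}E,\theta)$ recalled just before the statement. The first step is to pass from the polarized $\bC$-VHS $(\cV,\nabla,\{F^\bullet\},h)$ to the associated Higgs bundle $(E,\theta,h)=(Gr_\sF\cV,Gr_\sF\nabla,h)$ on $X\setminus D$. Griffiths transversality forces $\theta$ to lower the Hodge filtration by one, and since the Hodge filtration has finite length, $\theta$ is automatically nilpotent in the sense required by Theorem \ref{thm_main1}. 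That $(E,\theta,h)$ is a tame harmonic bundle whose Simpson-Mochizuki prolongation $\{{_\alpha}E\}$ forms a parabolic Higgs bundle on $X$ is a standard consequence of the VHS hypothesis once $D$ is a simple normal crossing divisor in a projective manifold.

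The second step is to dispose of the stability hypothesis in Theorem \ref{thm_main1}. By Simpson's theorem, the parabolic Higgs bundle produced by a polarized VHS is polystable with vanishing parabolic Chern numbers. Decomposing it as a finite direct sum $\bigoplus_s (\{{_\alpha}E_s\},\theta_s,h_s)$ of stable parabolic Higgs bundles, and using $\sJ_{D_j}(E)=\bigcup_s \sJ_{D_j}(E_s)$ so that $\sigma_{E,j}(a)\leq \sigma_{E_s,j}(a)$ for each summand, the hypothesis $|a_j|<\sigma_{E,j}(-r_j/m)$ passes to every factor. Applying Theorem \ref{thm_main1} to each stable summand with the same data $(L,A,F_1,F_2)$ and summing, one obtains
\begin{equation*}
R^if_\ast\bigl({\rm Dol}(_{-\frac{\bf r}{m}+{\bf a}}E,\theta)\otimes L\bigr)=0,\qquad \forall i>n.
\end{equation*}

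The third step is to translate this vanishing via the identification $_{\bm{a}}{\rm DR}_{(X,D)}(\cV,F^\bullet)\simeq {\rm Dol}(_{\bm{a}}E,\theta)$ from the setup, applied with $\bm{a}=-\frac{\bf r}{m}+{\bf a}$, which converts the Dolbeault vanishing into the desired graded de Rham vanishing. The only real obstacle is bookkeeping: one must verify that Simpson-Mochizuki prolongation and the graded de Rham/Dolbeault comparison both commute with direct summation in the polystable decomposition, and that the jumping index sets of the stable factors cover $\sJ_{D_j}(E)$ as claimed. These compatibilities are built into the parabolic structure induced by the harmonic metric and can be extracted from the relevant constructions in Mochizuki and Deng; everything else is a formal consequence of Theorem \ref{thm_main1}.
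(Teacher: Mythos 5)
Your argument is essentially correct, but it reaches the statement by a more roundabout reduction than the paper does. You share the two key ingredients the paper uses: the observation that the Higgs field of the VHS-associated Higgs bundle $(E,\theta,h)=({\rm Gr}_F\cV,{\rm Gr}_F\nabla,h)$ is nilpotent (the paper's Remark 5.11), and Deng's identification $_{\bm a}{\rm DR}_{(X,D)}(\cV,F^\bullet)\simeq{\rm Dol}({_{\bm a}}E,\theta)$ (Theorem 5.9), which converts the Dolbeault vanishing into the de Rham vanishing. Where you diverge is in how you feed $(E,\theta,h)$ into the vanishing machinery: you invoke polystability of the induced parabolic Higgs bundle, split it into stable parabolic factors, check that $\sJ_{D_j}(E)=\bigcup_s\sJ_{D_j}(E_s)$ so the gap condition $|a_j|<\sigma_{E,j}(-r_j/m)$ passes to each factor, and apply Theorem \ref{thm_main1} summand by summand. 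The paper avoids all of this: since a polarized complex VHS is by definition already equipped with a harmonic (Hodge) metric, $(E,\theta,h)$ is a nilpotent tame harmonic bundle and one applies the harmonic-bundle version of the vanishing theorem (Theorem \ref{thm_main_relative}, which is the actual engine behind Theorem \ref{thm_main1}) directly; no stability, no polystable decomposition, and no compatibility checks between the decomposition, the prolongations, and the jumping sets are needed. Your route is logically sound (the jump-set inclusion gives $\sigma_{E,j}\leq\sigma_{E_s,j}$, and $R^if_\ast$ and prolongation commute with finite direct sums), but it imports extra nontrivial inputs — Mochizuki's polystability of the parabolic Higgs bundle attached to a tame harmonic bundle over a higher-dimensional base (this is Mochizuki rather than Simpson in dimension $>1$) and, implicitly, the parabolic Kobayashi--Hitchin correspondence for each stable factor hidden inside Theorem \ref{thm_main1} — all of which the direct appeal to Theorem \ref{thm_main_relative} with the given Hodge metric renders unnecessary.
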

When $Y$ is a point, $F_2\simeq\sO_X$, ${\bf a}={\bf 0}$ and $(\cV,\nabla,\{F^\bullet\},h)$ is an $\bR$-polarized variation of Hodge structure, it is proved by J. Suh \cite{Suh2018} by using the theory of Hodge modules. As noted above, there are actually many other vanishing theorems in the setting of Corollary \ref{cor_main_suh} when $-\frac{r_i}{m}$ is the jumping index with respect to $D_i$ for some $i$. The jumping indices of the set of prolongations $\{{_{\bm{a}}}E|{\bm{a}}\in\bR^l\}$ are closely related to the monodromies of the flat bundle $(\cV,\nabla)$. Let ${\rm Res}_i(\nabla)$ be a residue of the monodromy of $(\cV,\nabla)$ along $D_i$ whose real parts of the eigenvalues lie in $[0,1)$. Let 
$$J_i=\{\textrm{the real part of an eigenvalue of the residue of }(\cV,\nabla) \textrm{ along }D_i\}$$
By Simpson's table \cite[p. 720]{Simpson1990}, one obtains that 
\begin{align}
	\sJ_{D_i}(E)=J_i+\bZ.
\end{align}

	{\bf Notations:}
	\begin{itemize}
\item For a holomorphic vector bundle $E$ on a complex manifold $X$, we denote $\dbar_E$ to be its holomorphic structure. We write it as $\dbar$ if no ambiguity appears.
\item 	We say that two metrics $ds_1^2$ and $ds_2^2$ are quasi-isometric, i.e., $ds_1^2\sim ds_2^2$, if there exists a constant $C>0$ such that $C^{-1}ds_1^2\leq ds_2^2\leq Cds_1^2$.  Two hermitian metrics $h$ and $h'$ of a holomorphic vector bundle on $X$ are called quasi-isometric if there exits a contant $C>0$ such that $C^{-1}h<h'<Ch$. We denote it by $h\sim h'$.
	\item Let $\alpha$ and $\beta$ be functions, metrics or $(1,1)$-forms. We denote $\alpha\lesssim\beta$ if $\alpha\leq C\beta$ for some $C\in\bR_{>0}$. We say that $\alpha$ and $\beta$ are quasi-isometric if $\alpha\lesssim\beta$ and $\beta\lesssim\alpha$. We denote it by $\alpha\sim \beta$.
    \item For a hermitian vector bundle $(E,h)$ on a complex manifold $X$, we always denote $R(E,h)$ (or simply $R(h)$) to be its Chern curvature.
    \item 
    	Let $\Theta\in A^{1,1}(X, {\rm End}(E))$ be a real form. Assume locally that
    	$$\Theta=\sqrt{-1}\sum_{i,j}\omega_{ij}e_i\otimes e_j^\ast$$
    	where $\omega_{ij}\in A^{1,1}_{X}$, $(e_1,\dots,e_r)\in E$ is an orthogonal local frame of $E$ and $(e^\ast_1,\dots,e^\ast_r)\in E^\ast$ is the dual frame.
    	We say that $\Theta$ is Nakano semi-positive, denoting $\Theta\geq_{{\rm Nak}}0$, if the bilinear form
    	\begin{align*}
    	\theta(u_1,u_2):=\sum_{i,j}\omega_{ij}(u_{1i},\overline{u_{2j}}),\quad{\textrm{where}}\quad u_l=\sum_{i}u_{lk}\otimes e_k\in T_{X}\otimes E,\quad \forall l=1,2
    	\end{align*}
    	is semi-positive definite.
    	
    	Let $\Theta_1,\Theta_2\in A^{1,1}(X, {\rm End}(E))$ be two real forms. We denote $\Theta_1\geq_{{\rm Nak}}\Theta_2$ if $\Theta_1-\Theta_2\geq_{{\rm Nak}}0$. $(E,h)$ is Nakano semi-positive if $\sqrt{-1}R(E,h)\geq_{{\rm Nak}}0$.
	\end{itemize}

\section{Preliminary}
In this section, we review the basic knowledge of Higgs bundles and nilpotent harmonic bundles.  References include \cite{Simpson1988,Simpson1990,Simpson1992,Mochizuki20071,Mochizuki20072}  and so on.
\subsection{Nilpotent harmonic bundles}\begin{defn}
 Let $X$ be a complex manifold. A Higgs bundle on $X$
	is a pair $(E,\theta)$ where $E$ is a holomorphic vector bundle on $X$ together with $\dbar_E$ its holomorphic structure, and  $\theta:E\rightarrow E\otimes \Omega_X^1$ is a holomorphic one form
	such that $\theta\wedge \theta=0$ in $\rm End$$(E)\otimes \Omega_X^2$.
	Here $\theta$ is called the Higgs field.
\end{defn}

Let $X$ be a complex manifold and let $(E,\theta)$ be a Higgs bundle on $X$. Define an operator $D''=\dbar_E+\theta$. Then $D''^2=0$. Consider a smooth hermitian metric $h$ on $E$. Let $\partial_h+\dbar_E$ be the Chern connection associated to $h$ and denote $\theta_h^\ast$ to be the adjoint of $\theta$ with respect to $h$. Denote $D_h':=\partial_h+\theta_h^\ast$.
\begin{defn}
	A smooth hermitian metric $h$ on a Higgs bundle $(E,\theta)$ is called harmonic if the operator $D_h:=D_h'+D''$ is integrable, that is, $D_h^2=0$. A harmonic bundle is a Higgs bundle endowed with a harmonic metric.
\end{defn}
Let $(E,\theta,h)$ be a harmonic bundle. Then one has the self-dual equation 
\begin{align}\label{align_selfdual_equ}
R(h)+[\theta,\theta_h^\ast]=0.
\end{align}

Let $X$ be an $n$-dimensional complex manifold and $D=\cup_{i\in I}D_i$ a simple normal crossing divisor on $X$. 

\begin{defn}[Admissible coordinate]
Let $p$ be a point of $X$ and $\{D_j\}_{j=1,\dots,l}$  the components of $D$ containing $p$. An
admissible coordinate around $p$ is the tuple $(U, \varphi)$ (or $(U;z_1,\dots,z_n)$ if no ambiguity appears) such that
\begin{enumerate}
	\item $U$ is an open subset of $X$ containing $p$.
	\item $\varphi$ is a biholomorphic morphism $U\rightarrow \Delta^n=\{(z_1,\dots,z_n)\in\bC^n\big||z_i|<1,\forall 1\leq i\leq n\}$ such that $\varphi(p) = (0,\dots,0)$ and $\varphi(D_j)= \{z_j = 0\}$ for any $j=1,\dots,l$.
\end{enumerate}
\end{defn}

\begin{defn}\label{defn_tamenilpotent}
	Let $(E,\theta,h)$ be a harmonic bundle of rank $r$ defined  on $X\backslash D$.
	Let $p$ be any point of $X$ and $(U, \varphi )$ an admissible coordinate around $p$. On $U$, we have
	the description:
	\begin{align}\label{align_theta}
	\theta=\sum_{j=1}^l f_j\cdot d\log  z_j+\sum_{k=l+1}^n g_k\cdot dz_k.
	\end{align}
	\begin{enumerate}
		\item (tameness) Let $t$ be a formal variable. We have the polynomials $\textrm{det}(t-f_j )$ and $\textrm{det}(t-g_k )$ of $t$, whose
		coefficients are holomorphic functions defined over $U\backslash{\cup_{j=1}^lD_j}$. When the functions are extended to
		the holomorphic functions over $U$, the harmonic bundle is called tame at $p$.
		A harmonic bundle is called tame if it is tame at every point $p\in X$.
		\item (nilpotentness) When $\textrm{det}(t-f_j)|_{U\cap D_{j}}=t^r$, the
		harmonic bundle is called nilpotent at $p$.
		When $(E, h, \theta)$ is nilpotent at any point $p\in X$,  it is called a nilpotent harmonic
		bundle.
		
	\end{enumerate}
\end{defn}

\subsection{Boundedness for the Higgs field}
Denote $\Delta$ (resp. $\Delta^\ast$) to be the unit disc (resp. punctured unit disc) in $\bC$. A Poincar\'e metric $\omega_P$ on $(\Delta^\ast)^l\times \Delta^{n-l}$ is defined as
$$\omega_P=\sum_{j=1}^l \frac{\sqrt{-1}dz_j\wedge d\bar{z}_j}{|z_j|^2({\log}|z_j|^2)^2}+\sum_{k=l+1}^n \sqrt{-1}dz_k\wedge d\bar{z}_k.$$

It can be defined by a potential function as
$$\omega_P=-\sqrt{-1}\partial\dbar \log(\prod_{j=1}^l(-\log|{z}_j|^2))+\sqrt{-1}\partial\dbar\sum_{k=l+1}^n |z_k|^2.$$

\begin{defn}[Poincar\'e type metric]
	 A  metric $ds^2$ on $X\backslash D$ is said to have Poincar\'e type growth near the divisor $D$ if, for every point $p\in D$ there is a coordinate neighborhood $U_p\subset X$ of $p$ with $U_p\cap (X\backslash D)\simeq(\Delta^\ast)^l\times \Delta^{n-l}$ for some $1\leq l\leq n$ such that in these coordinates, $ds^2$ is quasi-isometric to $\omega_P$.
	 
	 We will always denote $\omega_P$ to be the Poincar\'e type metric if no ambiguity causes.
\end{defn}

\begin{prop}[\cite{CG1975,Zucker1979}]\label{prop_poincare}
The Poincar\'e type metric	has finite volume, bounded curvature tensor and bounded covariant derivatives.
	
	\end{prop}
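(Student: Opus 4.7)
My plan is to reduce everything to an explicit computation on the local model $(\Delta^*)^l \times \Delta^{n-l}$. Since $X$ is pre-compact and the three properties are local in nature, after covering $\overline{X}$ by finitely many charts we split into two regimes: the region away from $D$, where $\omega_P$ is quasi-isometric to the smooth hermitian metric $\omega_M$ and hence has bounded geometry on any relatively compact set, and a neighborhood of each point of $D$, where by choosing a concrete representative in the quasi-isometry class we may assume $\omega_P$ coincides with the standard product Poincar\'e model. All three claims therefore reduce to verifying them for
\[
\omega_P=\sum_{j=1}^l \frac{\sqrt{-1}\,dz_j\wedge d\bar z_j}{|z_j|^2(\log|z_j|^2)^2}+\sum_{k=l+1}^n\sqrt{-1}\,dz_k\wedge d\bar z_k.
\]

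Finite volume is a direct integration. The volume form $\omega_P^n/n!$ factors as a product of the area forms of the $l$ punctured discs with the standard Euclidean form on the remaining $\Delta^{n-l}$. Passing to polar coordinates $z_j=r_je^{i\theta_j}$, each singular factor becomes $\frac{2\,dr_j\,d\theta_j}{r_j(\log r_j^2)^2}$, and the substitution $u_j=-\log r_j$ converts this to $\frac{du_j\,d\theta_j}{2 u_j^2}$, which is integrable as $r_j\to 0^+$. The flat factors contribute a finite integral over $\Delta^{n-l}$ trivially.

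For the boundedness of the curvature tensor and its covariant derivatives, the key remark is that the model metric is a Riemannian product. On each singular factor $(\Delta^*,\omega_P)$, the holomorphic universal covering $w\mapsto e^{2\pi i w}$ from the upper half plane realizes $\Delta^*$ as an isometric quotient of $(\mathbb{H},g_{\mathrm{hyp}})$ by a discrete group of isometries. Since $\mathbb{H}$ has constant sectional curvature $-1$ and is a homogeneous space under $\mathrm{PSL}(2,\mathbb{R})$, its Riemann curvature tensor and all its iterated covariant derivatives are parallel, and in particular pointwise bounded; these bounds descend to the quotient. The flat factor $\Delta^{n-l}$ contributes zero. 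Uniform bounds on curvature and on all covariant derivatives of curvature are preserved under Riemannian products, so the claim follows on the model.

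The only routine bookkeeping left is to confirm that the potential formula $\omega_P=-\sqrt{-1}\partial\dbar\log\prod_{j=1}^l(-\log|z_j|^2)+\sqrt{-1}\partial\dbar\sum_{k=l+1}^n|z_k|^2$ really recovers the explicit product model above, which is a one-line computation of $\partial\dbar\log(-\log|z_j|^2)$ on each factor. I do not anticipate any genuine obstacle: the main point is conceptual, namely exploiting the homogeneity of $\mathbb{H}$, and the analytic step is the convergence of $\int dr/(r(\log r)^2)$ near $r=0$.
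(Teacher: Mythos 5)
The paper itself gives no argument here: Proposition \ref{prop_poincare} is quoted directly from Cornalba--Griffiths and Zucker. Your computation on the product model is the standard one and is correct as far as it goes: the polar-coordinate integral $\int_0^{1/2} dr/(r(\log r)^2)<\infty$ gives finite volume; the covering $w\mapsto e^{2\pi i w}$ exhibits $(\Delta^*,\omega_P)$ as a local isometric quotient of a rescaled hyperbolic plane, whose curvature tensor is parallel, so $R$ and all $\nabla^k R$ are bounded (indeed $\nabla^k R=0$ for $k\geq 1$); and these bounds pass to Riemannian products. That is exactly the content of the cited references for the model metric.

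The genuine gap is the reduction ``by choosing a concrete representative in the quasi-isometry class we may assume $\omega_P$ coincides with the standard product Poincar\'e model.'' Finite volume is a quasi-isometry invariant, so that part of the reduction is legitimate. Bounded curvature and bounded covariant derivatives are \emph{not}: replacing a metric $g$ by $e^{f}g$ with $f$ bounded but with unbounded Hessian destroys curvature bounds while preserving quasi-isometry, so nothing about the curvature of the actual metric $\omega_P=\omega_M|_{X\setminus D}+\sqrt{-1}\ddbar\phi_P$ (or of a general metric ``of Poincar\'e type growth'' in the sense of the paper's definition) follows from your computation on the model. To close this you must work with the metric itself: either carry out the curvature computation for the explicit potential $-\log\prod_j(-\log|z_j|^2)$ plus the smooth background form, as Cornalba--Griffiths and Zucker do, or show that the given metric agrees with the product model up to terms that are small in $C^\infty$ (i.e.\ with all covariant derivatives controlled), which is a strictly stronger statement than quasi-isometry. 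As written, your argument proves the proposition only for the exact product model, not for the metric the paper actually uses.
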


For nilpotent harmonic bundles, the following important norm estimates  lead to the boundedness of the Higgs field $\theta$ with respect to the Poincar\'e type metric.

\begin{thm}[\cite{Simpson1990}, Theorem 1 and \cite{Mochizuki2002}, Proposition 4.1]\label{thm_thetaestimate}
	Let $(E,\theta,h)$ be a nilpotent harmonic bundle on $X\backslash D$. Let $f_j,g_k$ be the matrix valued holomorphic functions as in (\ref{align_theta}). Then there exists a positive constant $C>0$ such that
	$$|f_j|_h\leq C(-\log |z_j|^2)^{-1},\quad\quad \textrm{for}\quad j=1,\dots,l;$$
	$$|g_k|_h\leq C,\quad\quad \textrm{for}\quad k=l+1,\dots,n.$$
	
	Therefore $$|\theta|_{h,\omega_P}\leq C$$
	holds on $U\backslash D$ for some admissible neighborhood $U$.
\end{thm}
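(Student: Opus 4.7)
The plan is to work locally in an admissible coordinate $(U; z_1,\ldots,z_n)$ around a point $p\in D$ and to reduce the two coefficient estimates to a one-dimensional statement via restriction to curve sections. For each $k\in\{l+1,\ldots,n\}$, I would restrict to the disc $\{z_i = c_i\}_{i\neq k}$ with $c_i\in\Delta^\ast$ generic for $i\leq l$; this gives a harmonic bundle on the full disc $\Delta$ (no puncture), and the bound $|g_k|_h\leq C$ follows from a compactness argument using the self-dual equation (\ref{align_selfdual_equ}) and the boundedness of the coefficients of $\det(t-g_k)$ coming from tameness. For each $j\in\{1,\ldots,l\}$ I restrict to the disc in the $z_j$-direction (with the remaining coordinates fixed generically), obtaining a nilpotent harmonic bundle on $\Delta^\ast$ whose Higgs field contains the logarithmic term $f_j\cdot dz_j/z_j$, with $f_j$ extending holomorphically to the origin and $f_j(0)$ nilpotent. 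The theorem thus reduces to the one-variable assertion: for a nilpotent harmonic bundle on $\Delta^\ast$ with $\theta = f\cdot dz/z$, one has $|f|_h\leq C(-\log|z|^2)^{-1}$.

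For the one-variable estimate I would follow Simpson's strategy of constructing an explicit \emph{model} harmonic metric $h_{\mathrm{mod}}$ adapted to the nilpotent residue $f(0)$. Using the Jacobson--Morozov completion of $f(0)$ into an $\mathfrak{sl}_2$-triple produces a weight filtration $W_\bullet$ on the fiber, and $h_{\mathrm{mod}}$ is built by scaling the weight-$k$ piece by $(-\log|z|^2)^{-k}$; a direct computation shows that $h_{\mathrm{mod}}$ is asymptotically harmonic and that $|f|_{h_{\mathrm{mod}}}\sim (-\log|z|^2)^{-1}$. It remains to prove the quasi-isometry $h \sim h_{\mathrm{mod}}$ near $z=0$. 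The self-adjoint endomorphism $s=h^{-1}_{\mathrm{mod}}h$ satisfies an elliptic equation obtained by subtracting the self-dual equations for $h$ and for $h_{\mathrm{mod}}$; applying the maximum principle on $(\Delta^\ast,\omega_P)$, whose bounded geometry is guaranteed by Proposition \ref{prop_poincare}, shows that $s$ and $s^{-1}$ are both bounded. An equivalent, more direct route is to exploit $\bar\partial f = 0$ together with the self-dual equation to derive a Bochner-type inequality for $\log|f|^2_h$ and then apply the Ahlfors--Schwarz--Yau lemma, which for the negatively curved Poincar\'e metric yields precisely the logarithmic decay.

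Combining the coefficient bounds with $|dz_j/z_j|^2_{\omega_P}\sim(\log|z_j|^2)^2$ and $|dz_k|^2_{\omega_P}\sim 1$ then yields $|\theta|^2_{h,\omega_P}\leq C$ on $U\setminus D$, finishing the proof. The hardest step is the construction and analysis of the model metric: one must correctly use the full $\mathfrak{sl}_2$-structure encoded by the weight filtration to obtain positivity of exactly the right order in the Bochner inequality, and then control the behaviour of the comparison endomorphism $s$ near the puncture. A further subtlety is verifying that the quasi-isometry constants are uniform as the transverse parameters $c_i$ vary, so that a pointwise one-dimensional estimate upgrades to a uniform bound on $U\setminus D$. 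The holomorphic dependence of $f(0)$ and of its weight filtration on the transverse directions is what provides this uniformity, and this is precisely the place where Mochizuki's higher-dimensional refinement of Simpson's curve estimate is essential.
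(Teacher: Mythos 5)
You should first note that the paper itself does not prove this statement: it is quoted verbatim from \cite{Simpson1990} (Theorem 1) and \cite{Mochizuki2002} (Proposition 4.1), so the only meaningful comparison is with those proofs. Measured against them, your sketch has the right tools in view but the main route you propose is backwards. The quasi-isometry $h\sim h_{\mathrm{mod}}$ with the model metric built from the weight filtration of the nilpotent residue is a substantially deeper statement (it is essentially what appears in this paper as Theorem \ref{thm_tame_estimate}), and in both Simpson's and Mochizuki's developments its proof \emph{uses} the boundedness $|\theta|_{h,\omega_P}\le C$ as an input; deducing Theorem \ref{thm_thetaestimate} from $h\sim h_{\mathrm{mod}}$ is therefore circular, and you give no independent control of the comparison endomorphism $s=h_{\mathrm{mod}}^{-1}h$ near the puncture (the maximum principle on $(\Delta^\ast,\omega_P)$ needs a priori growth information, which is exactly what is being proved). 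Your alternative route, a Bochner inequality for $\log|f|_h^2$ plus the Ahlfors--Schwarz comparison, is indeed Simpson's actual argument, and that is the route you should have developed.

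The concrete gap in that route is the linear-algebra input that makes it run: for a nilpotent matrix $A$ one has $|[A,A^\ast]|\ge c(r)\,|A|^2$ with $c(r)>0$ depending only on the rank (compactness of the set of norm-one nilpotent matrices), so the self-dual equation (\ref{align_selfdual_equ}) gives $\Delta\log|\theta|_h^2\gtrsim |\theta|_h^2$ for the Euclidean norm of $\theta$, and the Ahlfors--Schwarz comparison with the Poincar\'e metric of $\Delta^\ast$ yields $|f|_h\le C(-\log|z|^2)^{-1}$ with $C$ depending only on the rank and the radius. This universality of the constant is what upgrades the curve-by-curve estimate to a uniform bound on $U\setminus D$; your proposed source of uniformity, ``holomorphic dependence of $f(0)$ and of its weight filtration on the transverse directions,'' is not tenable, since the weight filtration can jump and does not vary holomorphically, and it plays no role in the cited proofs. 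Likewise, for $|g_k|\le C$ a restriction to interior discs plus ``a compactness argument'' does not by itself give a bound that stays uniform as the transverse point approaches $D$; this is precisely the genuinely higher-dimensional content of Mochizuki's Proposition 4.1 and needs an argument (again with constants depending only on the rank), not an appeal to compactness.
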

{\color{red}

}

\section{$L^2$ cohomology and $L^2$ complex}
Let $(X,ds^2)$ be a complex hermitian manifold of dimension $n$ and $D$  a normal crossing divisor on $X$. Let $(E,h)$ be a hermitian holomorphic vector bundle on $X^\ast:=X\backslash D$. 
It gives us an inner product on the vector space of $E$-valued $(p, q)$-forms
$$\langle u,v\rangle_{h,ds^2}:=\int_{X^\ast}(u,v)_{h,ds^2}{\rm vol}_{ds^2}$$ where 
$(u,v)_{h,ds^2}$ is the pointwise inner product of $u$ and $v$ with respect to $h$ and $ds^2$. The $L^2$ norm of $u$ is defined as 
$$\|u\|_{h,ds^2}=\sqrt{\langle u,u\rangle_{h,ds^2} }.$$

Let $\sA^{p,q}_{X^\ast}$ denote the sheaf of smooth $(p,q)$-forms on $X^\ast$ for every $0\leq p,q\leq n$. Denote $\dbar:E\to E\otimes\sA^{0,1}_{X^\ast}$ to be the canonical $\dbar$ operator. Let $L^{p,q}_{(2)}(X^\ast,E;ds^2,h)$ (resp. $L^{m}_{(2)}(X^\ast,E;ds^2,h)$) be the space of square integrable $E$-valued $(p,q)$-forms (resp. $m$-forms) on $X^\ast$ with respect to the metrics $ds^2$ and $h$. Denote $\dbar_{\rm max}$ to be the maximal extension of the $\dbar$ operator defined on the domains
$$D^{p,q}_{X^\ast,ds^2}(E,h):=\textrm{Dom}^{p,q}(\dbar_{\rm max})=\{\phi\in L_{(2)}^{p,q}(X^\ast,E;ds^2,h)|\dbar\phi\in L_{(2)}^{p,q+1}(X^\ast,E;ds^2,h)\}.$$
Here $\dbar$ is taken in the sense of distribution.

The $L^2$-$\dbar$ cohomology $H_{(2),\rm max}^{p,\bullet}(X^\ast,E;ds^2,h)$ is defined as the cohomology of the complex
\begin{align}\label{align_L2_dol_cohomology}
D^{p,\bullet}_{X^\ast,ds^2}(E,h):=D^{p,0}_{X^\ast,ds^2}(E,h)\stackrel{\dbar_{\rm max}}{\to}\cdots\stackrel{\dbar_{\rm max}}{\to}D^{p,n}_{X^\ast,ds^2}(E,h).
\end{align}

Let $U\subset X$ be an open subset. Define $L_{X,ds^2}^{p,q}(E,h)(U)$ (resp. $L_{X,ds^2}^{m}(E,h)(U)$) to be the space of measurable $E$-valued $(p,q)$-forms (resp. $m$-forms) $\alpha$ on $U^\ast:=U\backslash D$ such that for every point $x\in U$, there is a neighborhood $V_x$ of $x$ so that
$$\int_{V_x\cap U^\ast}|\alpha|^2_{ds^2,h}{\rm vol}_{ds^2}<\infty.$$
For each $p$ and $q$, we define a sheaf $\sD_{X,ds^2}^{p,q}(E,h)$ on $X$ by
$$\sD_{X,ds^2}^{p,q}(E,h)(U):=\{\sigma\in L_{X,ds^2}^{p,q}(E,h)(U)|\bar{\partial}_{\rm max}\sigma\in L_{X,ds^2}^{p,q+1}(E,h)(U)\}$$
for every open subset $U\subset X$.
Define the $L^2$-Dolbeault complex of sheaves $\sD_{X,ds^2}^{p,\bullet}(E,h)$ as
\begin{align}\label{align_D_complex2}
\sD_{X,ds^2}^{p,0}(E,h)\stackrel{\dbar}{\to}\sD_{X,ds^2}^{p,1}(E,h)\stackrel{\dbar}{\to}\cdots\stackrel{\dbar}{\to}\sD_{X,ds^2}^{p,n}(E,h)
\end{align}
where $\dbar$ is defined in the sense of distribution.

\begin{lem}[fineness of the $L^2$ sheaf]\label{lem_fine_sheaf}
		Let $X$ be a compact complex manifold and $D$ a simple normal crossing divisor on $X$. Let $ds^2$ be a hermitian metric on $X^\ast:=X\backslash D$ which has Poincar\'e type growth along $D$ and let $(E,h)$ be a holomorphic vector bundle on $X^\ast$ with a possibly singular hermitian metric. Then $\sD^{p,q}_{X,ds^2}(E,h)$ is a fine sheaf for each $p$ and $q$.
	\end{lem}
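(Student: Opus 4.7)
Recall that a sheaf of $\sO_X$-modules (or more generally, of modules over the sheaf of continuous functions) is fine if and only if it admits partitions of unity subordinate to any locally finite open cover. So the plan is to fix an arbitrary locally finite open cover $\{U_i\}$ of $X$ together with a smooth partition of unity $\{\rho_i\}$ subordinate to it, and verify that multiplication by each $\rho_i$ defines a sheaf endomorphism of $\sD^{p,q}_{X,ds^2}(E,h)$ with support in $U_i$.

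The support condition is automatic, and since $X$ is compact every $\rho_i$ is uniformly bounded, so multiplication by $\rho_i$ clearly preserves local square-integrability. The only nontrivial point is to check that if $\sigma \in \sD^{p,q}_{X,ds^2}(E,h)(U)$, then
\[
\dbar(\rho_i \sigma) \;=\; (\dbar\rho_i)\wedge \sigma \;+\; \rho_i\,\dbar\sigma
\]
again lies in $L^2_{X,ds^2}(E,h)(U)$ locally. The term $\rho_i\,\dbar\sigma$ is fine by boundedness of $\rho_i$; the crux is the term $(\dbar\rho_i)\wedge \sigma$, which I would handle by the following key estimate.

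\textbf{Key estimate.} For any smooth function $\rho$ on $X$, the Poincar\'e norm $|\dbar\rho|_{ds^2}$ is bounded on $X^\ast$. Working in an admissible chart $(U;z_1,\dots,z_n)$ with $D\cap U=\{z_1\cdots z_l=0\}$, the Poincar\'e metric $ds^2\sim \omega_P$ has dual metric on $(0,1)$-forms comparable to
\[
\sum_{j=1}^l |z_j|^2(\log|z_j|^2)^2\, \frac{\partial}{\partial\bar z_j}\otimes\frac{\partial}{\partial z_j} \;+\; \sum_{k=l+1}^n \frac{\partial}{\partial\bar z_k}\otimes\frac{\partial}{\partial z_k},
\]
and since $\partial\rho/\partial \bar z_j$ is smooth (hence bounded) on $\overline U$, one has
\[
|\dbar\rho|^2_{\omega_P} \;\lesssim\; \sum_{j=1}^l |z_j|^2(\log|z_j|^2)^2\,\Bigl|\tfrac{\partial\rho}{\partial\bar z_j}\Bigr|^2 + \sum_{k=l+1}^n \Bigl|\tfrac{\partial\rho}{\partial \bar z_k}\Bigr|^2 \;\lesssim\; 1
\]
near $D$ (in fact the first sum tends to $0$). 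Therefore $|(\dbar\rho)\wedge\sigma|_{h,ds^2}\lesssim |\sigma|_{h,ds^2}$, and local square-integrability is preserved.

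\textbf{Conclusion.} Combining these facts, $\rho_i\cdot$ is a well-defined endomorphism of $\sD^{p,q}_{X,ds^2}(E,h)$ supported in $U_i$, and $\sum_i \rho_i=1$, so we obtain a partition of unity for the sheaf. The main (and really only) difficulty is the key estimate above, but it is essentially immediate from the explicit form of the Poincar\'e metric; no serious work is needed beyond noticing that $|\dbar\rho|_{\omega_P}$ vanishes at $D$ to order at least $|z_j|\log|z_j|^{-2}$ in each boundary direction. The singularity of $h$ plays no role since it enters identically on both sides of the multiplication-by-$\rho_i$ map. This completes the plan.
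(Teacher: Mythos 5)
Your proposal is correct and follows essentially the same route as the paper: reduce fineness to showing that multiplication by a smooth cutoff preserves the sheaf, apply the Leibniz rule $\dbar(\rho\sigma)=\dbar\rho\wedge\sigma+\rho\,\dbar\sigma$, and observe that $|\dbar\rho|_{ds^2}$ is bounded because the Poincar\'e-type metric makes $|d\bar z_j|^2_{\omega_P}\sim|z_j|^2(\log|z_j|^2)^2$ degenerate toward $D$. The paper simply invokes the boundedness properties of the Poincar\'e metric where you compute the dual-metric estimate explicitly, but the argument is the same.
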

	\begin{proof}
			Take an open subset $U\subset X$. It suffices to show that  $\eta\alpha\in\sD_{X,ds^2}^{p,q}(E,h)(U)$ for every $\alpha\in \sD_{X,ds^2}^{p,q}(E,h)(U)$ and every $\eta\in A^0_{\rm cpt}(U)$. Due to the asymptotic behavior of $ds^2$ (Proposition \ref{prop_poincare}), $|\eta|$ and $|\dbar\eta|_{ds^2}$ are  $L^\infty$ bounded on $U$. If we assume that $\alpha$ and $\dbar\alpha$ are locally $L^2$ integrable, then  $$\eta\alpha\in L_{X,ds^2}^{p,q}(E,h)(U),\quad \dbar(\eta\alpha)=\dbar\eta\wedge\alpha+\eta\wedge \dbar\alpha\in L_{X,ds^2}^{p,q+1}(E,h)(U).$$
			Thus the lemma is proved. 
	\end{proof}
\section{$L^2$ representation of the prolongation of Higgs bundles}
The purpose of this section is to establish a fine $L^2$ bi-complex resolution of the prolongation of Higgs bundles.
\subsection{Prolongation and parabolic structure}
Let $X$ be a complex  manifold of dimension $n$ and $D$  a normal crossing divisor on $X$. 
Let $(E,h)$ be a holomorphic vector bundle on $X\backslash D$ with a smooth hermitian metric $h$.  Let ${\bf a}=(a_1,\dots,a_l)\in \bR^l$
be a tuple of real numbers. For $\bm{b}=(b_1,\dots,b_l)\in\bR^l$, we denote $\bm{b}<\bm{a}$ if $b_i<a_i$ for every $i=1,\dots,l$.
\begin{defn}
	Let $X$ be a complex manifold and $D=\cup_{i=1}^l D_i$ a simple normal crossing divisor on $X$. A parabolic higgs bundle is a triple $(\{E_{\bm{a}}\},\theta)$ where
	for each $\bm{a}\in \bR^l$, $E_{\bm{a}}$ is a locally free coherent sheaf such that the following hold.
	\begin{itemize}
		\item  $ E_{\bm{a}+\bm{\epsilon}} = E{_{\bm{a}}}$ for any vector $\bm{\epsilon} = (\epsilon_1,\cdots,\epsilon_l)$ with $0<\epsilon_i\ll 1, \forall 1\leq i\leq l$.
		\item $E_{\bm{a}-{\bf 1}_i}={E_{\bm{a}}}\otimes \sO(-D_i)$ for every $1\leq i\leq l$. Here ${\bf 1}_i$ denotes $ (0,\dots,0,1,0,\dots,0)$ with 1 in
		the $i$-th component.
		\item The set of $\bm{a}$ such that ${\rm Gr}_{\bm{a}}E\neq 0$  is discrete in $\bR^l$.  Such $\bf{a}$ are called the weights.
		\item The Higgs field $\theta$ has at most logarithmic poles on $E_{\bm{a}}$, that is, $\theta$ can be extended to 
		\begin{align}
		E_{\bm{a}}\to E_{\bm{a}}\otimes\Omega_{X}(\log D)
		\end{align}
		for every $\bm{a}\in\bR^l$.
	\end{itemize} 
\end{defn}
\begin{defn}[Prolongation](Mochizuki\cite{Mochizuki2002}, Definition 4.2)\label{defn_prolongation}
	Let $U$ be an open subset of $X$ admissible to $D$. 	
	For any section $s\in \Gamma(U\backslash D,E)$,  let $|s|_h$ denote the norm function of $s$ with respect to the metric $h$. 
	We describe $|s|_h=O(\prod_{i=1}^l |z_i|^{-a_i})$ if there exists a
	positive number $C$ such that $$|s|_h\leq C\cdot \prod_{i=1}^l |z_i|^{-a_i}. $$
	
	We call $-\rm{ord}(s)\leq \bf{a}$ if $|s|_h=O(\prod_{i=1}^l |z_i|^{-a_i-\epsilon})$ for any positive number $\epsilon$.
	
    The $\sO_X$-module $ _{\bm{a}}E$ is defined as follows: For any open subset
	$U\subset X$,
	$$\Gamma(U, {_{\bm{a}}E}):=\{s\in\Gamma(U\backslash D,E)|-\textrm{ord}(s)\leq {\bm{a}}\}.$$
	The sheaf $_{\bf a}E $ is called the prolongment of $E$ by an increasing
	order $\bm{a}$.	
	Denote
	\begin{align}
		{\rm Gr}_{\bm{a}}E:={_{\bm{a}}}E/\cup_{\bm{b}<\bm{a}}{_{\bm{b}}}E.
	\end{align}	
\end{defn}
\begin{thm}[\cite{Mochizuki2009}, Proposition 2.53]\label{thm_parabolic}
	Let $X$ be a complex manifold and $D=\cup_{i=1}^l D_i$ a simple normal crossing divisor on $X$. Let $(E,\theta,h)$ be a tame harmonic bundle on $X\backslash D$.
	Then $(\{_{\bm{a}}E\},\theta)$ is a parabolic higgs bundle.
	The same conclusions hold for the flat bundle $(\cV,\nabla,h)$ associated with $(E,\theta,h)$ via Simpson's correspondence. 
\end{thm}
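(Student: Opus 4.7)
The plan is to reduce everything to a local analysis near a point $p \in D$ with admissible coordinates $(U; z_1,\dots,z_n)$, and to exploit Mochizuki's asymptotic structure theorem for tame harmonic bundles. The key analytic input is a preferred holomorphic frame of $_{\bm{a}}E$ near $p$ in which the harmonic metric $h$ is quasi-isometric to a "diagonal model" $\mathrm{diag}\bigl(\prod_{i=1}^l |z_i|^{-2 b_{k,i}}\bigr)_{k=1,\ldots,r}$, with weights $b_{k,i}$ determined by the KMS spectrum of the harmonic bundle along each $D_i$. Once such a frame exists, local freeness of $_{\bm{a}}E$ is immediate, and the whole parabolic structure can be read off combinatorially from the weights.

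Granting the existence of such an adapted frame, I would verify the four bullets of the definition in turn. Property $ _{\bm{a}+\bm{\epsilon}}E = {_{\bm{a}}E}$ for $0<\bm{\epsilon}\ll \bm 1$ is forced by the $\epsilon$-relaxation built into Definition \ref{defn_prolongation}: between two consecutive weights the norm condition does not change. The equality $_{\bm{a}-{\bf 1}_i}E = {_{\bm{a}}E}\otimes \sO(-D_i)$ is checked section-wise, since $|s\cdot z_i|_h \lesssim \varphi_i \cdot |s|_h$ and $\varphi_i \sim |z_i|$ near $D_i$, so that multiplication by $z_i$ shifts the admissible order by $-{\bf 1}_i$. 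Discreteness of the weights reduces, after passing to the adapted frame, to the discreteness of the real parts of the KMS spectrum; this is a genuine theorem of Simpson-Mochizuki, coming from the nilpotent orbit theorem in the tame harmonic setting, and is the main analytic obstacle.

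For the logarithmic pole property of the Higgs field, I would write $\theta = \sum_{j=1}^l f_j\, d\log z_j + \sum_{k=l+1}^n g_k\, dz_k$ as in (\ref{align_theta}), and invoke Theorem \ref{thm_thetaestimate} (together with its generalization to tame harmonic bundles by Mochizuki): $|f_j|_h \lesssim (-\log |z_j|^2)^{-1}$ and $|g_k|_h \lesssim 1$. Combined with the norm characterization in Definition \ref{defn_prolongation}, these bounds imply that for a local section $s$ with $|s|_h \lesssim \prod_i \varphi_i^{-a_i-\epsilon}$, each of $f_j s$ and $g_k s$ still satisfies the same asymptotic bound (the logarithmic factor is absorbed by an arbitrarily small $\epsilon'$). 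Hence the coefficients of $\theta$ preserve $_{\bm{a}}E$, giving the extension $\theta: {_{\bm{a}}E} \to {_{\bm{a}}E}\otimes \Omega_X(\log D)$.

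Finally, the statement for the flat bundle $(\cV,\nabla,h)$ is obtained by transport through Simpson's correspondence \cite{Simpson1988}: the same norm-growth characterization defines $_{\bm{a}}\cV$ with respect to $h$, and Simpson's table on p.~720 of \cite{Simpson1990} relates the real parts of the eigenvalues of the residue of $\nabla$ to the weights of the prolongation of $(E,\theta)$, so all four bullet conditions transfer verbatim. The heart of the argument is thus the local asymptotic description of $h$ around each stratum of $D$; once that is in hand, properties (1)--(4) are essentially formal. The main obstacle, as noted, is the structure theorem giving the adapted frame and the discreteness of the KMS spectrum, both of which I would quote from Mochizuki's work rather than reprove.
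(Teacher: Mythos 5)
The paper offers no proof of this statement at all: it is quoted as \cite{Mochizuki2009}, Proposition 2.53, so there is nothing internal to compare your argument against. Your outline is, in substance, the standard proof from the cited literature, and it is sound as a sketch precisely because you place the genuine analytic content --- the adapted frame near a stratum of $D$, local freeness of $_{\bm{a}}E$, and discreteness of the KMS spectrum --- in Mochizuki's structure theorem rather than claiming to reprove it; the remaining bullets (the $\bm{\epsilon}$-stability, the relation $_{\bm{a}-{\bf 1}_i}E={_{\bm{a}}}E\otimes\sO(-D_i)$, the logarithmic extension of $\theta$, and the transfer to $(\cV,\nabla,h)$ via Simpson's correspondence) are indeed formal once those inputs are granted.

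Two points would need correcting in a written version. First, $h$ is not quasi-isometric to the pure diagonal model $\mathrm{diag}\bigl(\prod_i|z_i|^{-2b_{k,i}}\bigr)$ when the residues of $\theta$ have nontrivial nilpotent part: the true asymptotics carry logarithmic corrections governed by the weight filtrations (compare Theorem \ref{thm_tame_estimate} in this paper); what is true, and all you need, is the comparison up to factors $\prod_i|z_i|^{\pm\epsilon}$, which the $\epsilon$-relaxation in Definition \ref{defn_prolongation} absorbs. Second, the decay $|f_j|_h\lesssim(-\log|z_j|^2)^{-1}$ of Theorem \ref{thm_thetaestimate} is specific to the nilpotent case; for a merely tame harmonic bundle one only gets boundedness of $|f_j|_h$ and $|g_k|_h$ (Mochizuki), but boundedness is all your section-wise verification of $\theta({_{\bm{a}}}E)\subset{_{\bm{a}}}E\otimes\Omega_X(\log D)$ actually uses, so the conclusion is unaffected. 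Finally, be aware that the first bullet ($_{\bm{a}+\bm{\epsilon}}E={_{\bm{a}}}E$ for $0<\bm{\epsilon}\ll\bm{1}$) is not a tautology of the definition: it already invokes discreteness of the jumping indices, i.e.\ the same structure theorem you quote, as your phrasing implicitly concedes.
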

The following norm estimate for meromorphic sections is crucial in the proof of our main theorem.
\begin{thm}[\cite{Mochizuki20072}, Part 3, Chapter 13]\label{thm_tame_estimate}
	Let $(\cV,\nabla,h)$ be a tame harmonic bundle on $X^\ast=(\Delta^\ast)^l\times\Delta^{n-l}$. Let
	$$p:\bH^{l}\times \Delta^{n-l}\to (\Delta^\ast)^l\times \Delta^{n-l},$$
	$$(z'_1,\dots,z'_l,w_1,\dots,w_{n-l})\mapsto(e^{2\pi\sqrt{-1}z'_1},\dots,e^{2\pi\sqrt{-1}z'_l},w_1,\dots,w_{n-l})$$
	be the universal covering. Let ${\bf a}=(a_1,\dots,a_l)\in\bR^l$. Let
	$W^{(1)}=W(N_1),\dots,W^{(n)}=W(N_1+\cdots+N_n)$ be the residue weight filtrations on $V:=\Gamma(\bH^n,p^\ast\cV)^{p^\ast\nabla}$.
	Then
	for any $v\in V$ such that $$0\neq [v]\in {\rm Gr}_{l_n}^{W^{(n)}}\cdots{\rm Gr}_{l_1}^{W^{(1)}}V\cap {\rm Gr}_{\bf a}\cV,$$ one has
	\begin{align}\label{align_VHS_regular_sing}
		|v|_{h_\bV}\sim |s_1|^{-a_1}\cdots|s_l|^{-a_l} \left(\frac{\log|s_1|}{\log|s_2|}\right)^{l_1}\cdots\left(-\log|s_l|\right)^{l_n}
	\end{align}
	over any region of the form
	$$\left\{(s_1,\dots, s_l,w_1,\dots,w_{n-l})\in (\Delta^\ast)^l\times \Delta^{n-l}\bigg|\frac{\log|s_1|}{\log|s_2|}>\epsilon,\dots,-\log|s_l|>\epsilon,(w_1,\dots,w_{n-l})\in K\right\}$$
	for any $\epsilon>0$ and an arbitrary compact subset $K\subset \Delta^{n-l}$. The same conclusion holds for the holomorphic sections in the Higgs bundle $(E,\theta,h)$ associated with $(\cV,\nabla,h)$ via Simpson's correspondence.
\end{thm}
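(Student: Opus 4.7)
The plan is to establish the norm estimate by induction on the number of punctured directions, taking as the base case Simpson's one-variable norm estimate and, for the inductive step, invoking Mochizuki's multi-variable $\mathrm{SL}(2)$-orbit theorem for tame harmonic bundles.

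First I would dispose of the one-variable case $l=1$. Here $V$ carries a single nilpotent operator $N_1$ (the logarithm of the unipotent part of the monodromy around the puncture), together with a parabolic grading, and for $v$ in a pure piece $\mathrm{Gr}_{l_1}^{W(N_1)}V\cap\mathrm{Gr}_{a_1}\mathcal{V}$ the desired estimate $|v|_{h_{\mathbb{V}}}\sim|s_1|^{-a_1}(-\log|s_1|)^{l_1}$ is Simpson's norm estimate from his work on harmonic bundles on non-compact curves. Its proof uses the self-duality equation $R(h)+[\theta,\theta^\ast_h]=0$ together with a maximum-principle / sub-mean-value argument for the Laplacian of $\log|v|_h$, reducing the statement to an asymptotic computation of the Hermitian--Einstein equation near the puncture.

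For general $l$, the strategy is to exploit the sectorial hypothesis $\log|s_1|/\log|s_2|>\epsilon,\dots,-\log|s_l|>\epsilon$, which prescribes an ordering of the speeds at which $s_1,\dots,s_l$ tend to the boundary. On such a sector, one rescales the harmonic metric along appropriately chosen sequences and applies Mochizuki's $\mathrm{SL}(2)$-orbit theorem for tame harmonic bundles (the analogue, in the non-polarised setting, of the Cattani--Kaplan--Schmid asymptotics for polarised variations of Hodge structure). The conclusion is that $h_{\mathbb{V}}$ is uniformly comparable on the sector to a model metric $h_{\mathrm{mod}}$ built out of the semisimple $\mathfrak{sl}_2$-representations produced by the iterated weight filtrations $W^{(1)}\subset\cdots\subset W^{(n)}$ and the parabolic grading. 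For the class $[v]$, which is by hypothesis pure of bidegree $(l_1,\dots,l_n;\mathbf{a})$ with respect to these data, the model metric contributes exactly $|s_1|^{-a_1}\cdots|s_l|^{-a_l}$ from the parabolic weights and the iterated logarithmic factors $(\log|s_1|/\log|s_2|)^{l_1}\cdots(-\log|s_l|)^{l_n}$ from the weights under the various $\mathfrak{sl}_2$-triples; these factors are forced by standard $\mathfrak{sl}_2^{\oplus n}$ representation theory applied to a highest-weight vector.

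The main obstacle is establishing the multi-variable $\mathrm{SL}(2)$-orbit asymptotics in the tame, non-polarised setting. In the variation-of-Hodge-structure case, polarisability produces compactness for sequences of period maps and drives Schmid's and Cattani--Kaplan--Schmid's arguments; in the tame harmonic bundle setting there is no polarisation in the classical sense, and Mochizuki substitutes the theory of polarised mixed twistor structures, proving first a twistor version of the nilpotent-orbit theorem and then bootstrapping it to the $\mathrm{SL}(2)$-orbit theorem by a double induction on the rank of the bundle and on the number of punctured directions. Once this machinery is in place, extracting the exponents in \eqref{align_VHS_regular_sing} becomes the representation-theoretic computation above, and the statement for the Higgs bundle $(E,\theta,h)$ transfers from $(\mathcal{V},\nabla,h)$ via Simpson's correspondence, which matches parabolic filtrations and nilpotent residues on the two sides.
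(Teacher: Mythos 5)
The paper does not prove this statement at all: it is quoted from Mochizuki (\cite{Mochizuki20072}, Part 3, Chapter 13) and used as a black box, so there is no in-paper argument to compare against. Your outline is a faithful summary of the architecture of Mochizuki's own proof --- reduction to Simpson's one-variable norm estimate, then sectorial multi-variable asymptotics via the nilpotent-orbit and $\mathrm{SL}(2)$-orbit theorems for polarised mixed twistor structures, with the exponents in \eqref{align_VHS_regular_sing} read off from the iterated weight filtrations and parabolic grading --- so in substance you are invoking exactly the cited machinery rather than giving an independent derivation, which is the same treatment the paper itself adopts.
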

\subsection{$L^2$ representation}\label{section_L2_representation}
Let $X$ be a pre-compact open subset of a hermitian manifold $(M,\omega_M)$ and let $D=\cup_{i=1}^l D_i$ be a simple normal crossing divisor on $M$. Let $\sigma_i\in H^0(M,\sO_M(D_i))$ be the defining section of $D_i$ and $h_i$ an arbitrary hermitian metric on $\sO_M(D_i)$. We denote $\varphi_i=|\sigma_i|_{h_i}$. Let $\phi_P\in C^\infty(X\backslash D)$ such that $\omega_P:=\sqrt{-1}\ddbar\phi_P+\omega_M|_{X\backslash D}$ is a hermitian metric on $X\backslash D$ which has Poincar\'e type growth near $D\cap X$. Let $(E,\theta,h)$ be a nilpotent harmonic bundle on $M\backslash D$ together with $\dbar_E$ its holomorphic structure. For every indices ${\bf a}=(a_1,\dots,a_l)\in\bR^l$, denote $_{\bf a}E$ to be the associated prolongation of $(E,\theta)$.  We denote by $${\rm Dol}(_{\bf a}E,\theta):= {_{\bf a}}E\to {_{\bf a}}E\otimes\Omega_{X}(\log D)\to\cdots$$
the associated logarithmic Dolbeault complex.

For $N\in \bZ$, denote $h_N({\bf a}):=he^{-N\phi_P+\sum_{i=1}^l2a_i\log\varphi_i}$ to be a modified metric on $E$. Define the operator $D''=\dbar_E+\theta$. Denote by $\sD^{k}_{X,\omega_P}(E,D'',h_N({\bf a}))$ the sheaf of measurable $E$-valued $k$-forms $\alpha$ such that $\alpha$ and $D''\alpha$ are locally square integrable near every point of $X$.  Denote 
$$\sD^{\bullet}_{X,\omega_P}(E,D'',h_N({\bf a})):=\sD^{0}_{X,\omega_P}(E,D'',h_N({\bf a}))\stackrel{D''}{\to}\sD^{1}_{X,\omega_P}(E,D'',h_N({\bf a}))\stackrel{D''}{\to}\cdots$$
to be the associated $L^2$-Dolbeault complex. There is a natural inclusion $${\rm Dol}(_{\bf a}E,\theta)\to \sD^{\bullet}_{X,\omega_P}(E,D'',h_N({\bf a}+\bm{\epsilon}))$$ for every $\bm{0}<\bm{\epsilon}\in\bR^l$.

For every ${\bf a}=(a_1,\dots,a_l)\in\bR^l$, define
$$\sigma_{E,i}(a_i)=\min\{|b-a_i||b\in \sJ_{D_i}(E), b\neq a_i\},\quad \forall 1\leq i\leq l$$
and $\bm{\sigma}_E({\bf a})=(\sigma_{E,1}(a_1),\dots,\sigma_{E,l}(a_l))$.
The main result of this section is the following.
\begin{thm}\label{thm_main2}
	There is a constant $N_0$,  depending only on $(E,\theta,h)$ and $X$ (independent of $\bm{a}$ and $\bm{\epsilon}$ in the following), such that  the inclusion map $${\rm Dol}( _{{\bf a}}E,\theta)\to \sD^{\bullet}_{X,\omega_P}(E,D'',h_N({\bf a}+\bm{\epsilon}))$$ is a quasi-isomorphism for every
	${\bf a}\in\bR^l$, every $N>N_0$ and every $\bm{0}<\bm{\epsilon}<\bm{\sigma}_E({\bf a})$. 
\end{thm}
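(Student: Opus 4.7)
The inclusion is a morphism of complexes of sheaves and $\sD^{\bullet}_{X,\omega_P}(E,D'',h_N({\bf a}+\bm{\epsilon}))$ consists of fine sheaves (the cut-off argument of Lemma \ref{lem_fine_sheaf} carries over verbatim, since $\theta$ is bounded by Theorem \ref{thm_thetaestimate}), so the claim is local on $X$. Away from $D$ the inclusion is the usual smooth Dolbeault resolution, so I work in an admissible chart $U\simeq (\Delta^{*})^{l}\times\Delta^{n-l}$ around a point of $D$. The boundedness $|\theta|_{h,\omega_P}\leq C$ from Theorem \ref{thm_thetaestimate} shows that wedging with $\theta$ preserves the $L^2$ condition, so there is an orthogonal bidegree decomposition
\[
\sD^{k}_{X,\omega_P}(E,D'',h_N({\bf a}+\bm{\epsilon})) \;=\; \bigoplus_{p+q=k}\sD^{p,q}_{X,\omega_P}(E,h_N({\bf a}+\bm{\epsilon})),
\]
and $D''=\dbar+\theta\wedge\cdot$ preserves the decreasing filtration by holomorphic degree $p$.

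The spectral sequence of this filtration has $E_0$-page $(\sD^{p,\bullet}_{X,\omega_P}(E,h_N({\bf a}+\bm{\epsilon})),\dbar)$. The theorem then reduces to the statement that for every $p$ this pure $L^2$-$\dbar$-complex is a fine resolution of the logarithmic sheaf $_{\bf a}E\otimes\Omega_X^{p}(\log D)$. Indeed, granting the resolution statement the $E_1$-page identifies canonically with ${\rm Dol}(_{\bf a}E,\theta)$, since the induced boundary maps on holomorphic sections are given precisely by wedging with $\theta$, and the inclusion intertwines the two filtrations so the comparison of spectral sequences yields the quasi-isomorphism.

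The identification of the $q=0$ cohomology sheaf with $_{\bf a}E\otimes\Omega^{p}(\log D)$ rests on Mochizuki's asymptotic norm estimate, Theorem \ref{thm_tame_estimate}: combined with the bounded growth $|dz_i/z_i|_{\omega_P}=O(|\log\varphi_i|)$ of logarithmic forms and the Poincar\'e volume form, a direct integration shows that a holomorphic section $s\otimes\omega$ is locally $L^2$ with respect to $\omega_P$ and $h_N({\bf a}+\bm{\epsilon})$ iff $|s|_h\lesssim\prod\varphi_i^{-a_i-\epsilon}$ for every $\epsilon>0$, which is precisely the defining condition of $_{\bf a}E$. The hypothesis $\bm{0}<\bm{\epsilon}<\bm{\sigma}_E({\bf a})$ enters here: it excludes any jumping index of $\sJ_{D_i}(E)$ from the interval $(a_i,a_i+\epsilon_i]$, so the $L^2$ condition captures exactly $_{\bf a}E$ and not any strictly larger prolongation.

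The main obstacle is proving the local vanishing of the $L^2$-$\dbar$-cohomology in bidegrees $(p,q)$ with $q\geq 1$. My plan is a Bochner-Kodaira-H\"ormander argument on a complete K\"ahler modification of $\omega_P$ (Demailly's standard procedure). The curvature of the twisted bundle metric decomposes as
\[
\sqrt{-1}R(h_N({\bf a}+\bm{\epsilon})) = \sqrt{-1}R(h) + \Bigl(N(\omega_P-\omega_M) + \sum_{i}(a_i+\epsilon_i)\sqrt{-1}R(h_i)\Bigr)\otimes \mathrm{Id}_E,
\]
where $R(h_i)$ is the smooth, $\omega_M$-bounded curvature of the auxiliary metric $h_i$ on $\sO_M(D_i)$. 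The self-dual equation \eqref{align_selfdual_equ} combined with Theorem \ref{thm_thetaestimate} forces $\sqrt{-1}R(h)$ to be bounded with respect to $\omega_P$ as an $\mathrm{End}(E)$-valued form, with a bound depending only on $(E,\theta,h)$. In a shrinking neighborhood of $D$ the ratio $\omega_M/\omega_P$ goes to zero, so the bounded terms $\sqrt{-1}R(h)$ and $(a_i+\epsilon_i)\sqrt{-1}R(h_i)$ become negligible relative to $N\omega_P$ once $N>N_0=N_0((E,\theta,h),X)$, yielding strict Nakano positivity $\sqrt{-1}R(h_N({\bf a}+\bm{\epsilon}))\geq_{\mathrm{Nak}}\omega_P\otimes\mathrm{Id}_E$ on the relevant neighborhood; this $N_0$ is manifestly independent of $\bm{a}$ and $\bm{\epsilon}$, since it only needs to absorb bounds intrinsic to $(E,\theta,h)$ and $X$ (the $\bm{a},\bm{\epsilon}$-dependent term is then absorbed by shrinking the neighborhood, which is legitimate at the stalk level). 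Away from $D$ the classical Dolbeault-$\dbar$ resolution applies and the two regions are patched by a partition of unity. H\"ormander's $L^2$ estimate on the complete K\"ahler setup then provides the required $\beta\in L^2$ solving $\dbar\beta=\alpha$ for every $\dbar$-closed $\alpha$ of bidegree $(p,q)$ with $q\geq 1$, completing the proof.
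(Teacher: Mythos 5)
Your overall architecture coincides with the paper's: reduce to an admissible chart, use the boundedness of $\theta$ (Theorem \ref{thm_thetaestimate}) to split $\sD^{k}$ into bidegrees, prove $\dbar$-exactness in each row for $q\geq 1$ via a H\"ormander-type estimate on a complete K\"ahler modification, and identify the $q=0$ kernel with $_{\bf a}E\otimes\Omega^p_X(\log D)$ via Mochizuki's norm estimates, the hypothesis $\bm{0}<\bm{\epsilon}<\bm{\sigma}_E(\bm{a})$ entering exactly as you say. However, the curvature positivity step --- the heart of the $q\geq1$ vanishing --- has a genuine gap. You write $\sqrt{-1}R(h_N(\bm{a}+\bm{\epsilon}))=\sqrt{-1}R(h)+N(\omega_P-\omega_M)+\sum_i(a_i+\epsilon_i)\sqrt{-1}R(h_i)$ (correct) and then claim that near $D$ the ratio $\omega_M/\omega_P$ tends to zero, so that everything except $N\omega_P$ is negligible. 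This is false: $\omega_P$ has Poincar\'e growth only in the directions conormal to $D$, while in the directions tangent to the branches of $D$ one has $\omega_P\sim\omega_M$ on every neighborhood, however small. Consequently $-N\omega_M$ is a negative term \emph{comparable to} $N\omega_P$ in tangential directions (and it scales with $N$, so increasing $N$ cannot absorb it unless one happens to know $\omega_M\leq(1-\delta)\omega_P$, which is not part of the hypotheses on $\phi_P$); likewise $(a_i+\epsilon_i)\sqrt{-1}R(h_i)$ is comparable to $|\bm{a}|\,\omega_P$ tangentially and cannot be killed by shrinking, so your claimed independence of $N_0$ from $\bm{a}$ is not actually established.

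The repair is the one the paper uses and your argument is missing: the complex $\sD^{p,\bullet}_{X,\omega_P}(E,h_N(\bm{a}+\bm{\epsilon}))$ depends only on the quasi-isometry class of the metric, so one may replace $h_N(\bm{a}+\bm{\epsilon})$ locally by the model metric $h\cdot e^{-\kappa}$ with $\kappa=-N\bigl(\sum_j\log(-\log|z_j|^2)-\sum_k|z_k|^2\bigr)-\sum_j(a_j+\epsilon_j)\log|z_j|^2$. Its weight has $\sqrt{-1}\ddbar\kappa=N\omega_P^{\rm std}$ exactly, because the $\bm{a}$-dependent part is pluriharmonic off $D$; hence $\sqrt{-1}R=\sqrt{-1}R(h)+N\omega_P^{\rm std}$, and only the intrinsic bound $|R(h)|_{h,\omega_P}\leq C$ coming from \eqref{align_selfdual_equ} and Theorem \ref{thm_thetaestimate} must be absorbed, giving an $N_0$ depending only on $(E,\theta,h)$ and $X$. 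A secondary omission: the H\"ormander/Demailly estimate applies to $(n,q)$-forms, so for bidegree $(p,q)$ with $p<n$ you must rewrite $E$-valued $(p,q)$-forms as $(n,q)$-forms with values in $\Lambda^{n-p}T_{X^\ast}\otimes E$ and verify Nakano positivity of \emph{that} bundle; the curvature of the Poincar\'e metric on $\Lambda^{n-p}T_{X^\ast}$ is bounded, so this only enlarges $N_0$ by a universal constant, but it must be said (it is the role of $\sE_p$ in the paper's Propositions \ref{prop_nakanopositivity} and \ref{prop_localexactness1}).
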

The proof of the theorem is postponed to the end of this section.
\subsection{Local behavior of the metrics}\label{section_local_behavior}
Since the statement of Theorem \ref{thm_main2} is local, we assume that $X_1=\Delta^n$ and $D=\sum_{i=1}^l D_i$ with $D_i=\{z_i=0\}$ for each $i$. Let $(E,h)$ be a nilpotent harmonic bundle over $X_1^\ast:=X_1\backslash D$.

For any $N\in \bZ_{> 0}$ and any ${\bf{a}}=(a_1,\cdots,a_l)\in \bR^l$, we define
\begin{align}
	\kappa({\bf{a}},N):=-N(\sum_{j=1}^l \log (-\log |z_j|^2)-\sum_{k=l+1}^n|z_k|^2)-\sum_{j=1}^l a_j {\rm{log}}|z_j|^2.
\end{align}
Set $h({\bf{a}},N):=h\cdot e^{-\kappa({\bf{a}},N)}$. Then
$$R(h({\bf{a}},N))=R(h)+\sqrt{-1}\partial\dbar \kappa({\bf{a}},N)=R(h)+N\omega_P.$$
\begin{rmk}\label{rmk_quasi}
	For the general settings as in \S 4.2, the modified hermitian metric  $h_{N}({\bf{a}}):=h\cdot (-\prod_{i=1}^l \log |\sigma_i|_{h_i}^2)^N\cdot \prod_{i=1}^l |\sigma_i|_{h_i}^{2a_i}$ is   locally quasi-isometric to $h({\bf{a}},N)$.
\end{rmk}
For every $1\leq i\leq n$, let $p_i$ be the projection from  $(\Delta^\ast)^l \times \Delta^{n-l}$ to its $i$-th factor. Note that $\Omega_{X_1^\ast}=\oplus_{i=1}^n L_i$  where $L_i$ is the trivial line bundle defined by $L_i:=p_i^\ast \Omega_{\Delta^\ast}$ for $i=1,...,l$ and $L_i=p_i^\ast\Omega_{\Delta}$ for $i=l+1,...,n$.   For any $p=0,...,n$, set $h_p$ to be the hermitian metric
on $T_{X_1^\ast}^p$ induced by $\omega_P$. Then there is a positive constant $C(p,l)>0$ depending
only on $p$ and $l$ so that $|R(h_p)|_{h_p,\omega_P}\leq C(p,l)$. Set $C_0:=\textrm{sup}_{p=0,...,n;l=1,...,n}C(p,l)$.
\begin{prop}\label{prop_nakanopositivity}
	Let $(E,\theta,h)$ be a nilpotent harmonic bundle over $X_1^\ast$. Then there exists a constant $N_0>0$ so that the following property holds after a possible shrinking of $X_1$.	
	For the  vector bundle $\mathcal{E}_p:=T_{X_1^\ast}^p\otimes E$  endowed with the metric $h_{\mathcal{E}_p}$ induced by $h({\bf{a}},N)$ and $\omega_P$, one has the following estimate
	\begin{align}\label{align_modifiedmetric}
		\sqrt{-1}R(h_{\mathcal{E}_p})\geq_{{\rm Nak}}  \omega_P\otimes {\rm Id}_{\mathcal{E}_p}
	\end{align}
	over $X_1^\ast$ for any $N\geq N_0$. Such $N_0$ does not depend on the choice of ${\bf{a}}$.
\end{prop}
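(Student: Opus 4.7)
The plan is to expand the curvature $R(h_{\sE_p})$ as a sum of the curvatures of $h_p$ on $T_{X_1^\ast}^p$ and of $h({\bf a},N)$ on $E$, control the $E$-piece via the self-duality equation combined with Simpson's Higgs-field estimate (Theorem~\ref{thm_thetaestimate}), and then absorb the resulting bounded curvature defect by choosing $N$ large in the extra term $N\omega_P$.

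Since $h_{\sE_p}$ is the tensor product metric, I would first write
\begin{equation*}
R(h_{\sE_p}) = R(h_p)\otimes{\rm Id}_E + {\rm Id}_{T_{X_1^\ast}^p}\otimes R(h({\bf a},N)).
\end{equation*}
A direct computation of $\sqrt{-1}\ddbar\kappa({\bf a},N)$ shows that the ${\bf a}$-dependent terms $-\sum_j a_j\log|z_j|^2$ are pluriharmonic and the remaining terms contribute exactly $N\omega_P$. Combined with the self-duality identity (\ref{align_selfdual_equ}), $R(h)=-[\theta,\theta_h^\ast]$, this gives
\begin{equation*}
\sqrt{-1}R(h({\bf a},N)) = -\sqrt{-1}[\theta,\theta_h^\ast] + N\omega_P,
\end{equation*}
already manifestly independent of ${\bf a}$---the sole role of ${\bf a}$ in the construction is the pluriharmonic weight shift.

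Next, Theorem~\ref{thm_thetaestimate} gives $|\theta|_{h,\omega_P}\leq C$ on $X_1^\ast$ after a possible shrinking of $X_1$, hence a uniform pointwise bound $|\sqrt{-1}[\theta,\theta_h^\ast]|_{h,\omega_P}\leq C_1$ depending only on $(E,\theta,h)$. Together with the given $|R(h_p)|_{h_p,\omega_P}\leq C_0$, I would invoke the elementary fact that any ${\rm End}(F)$-valued $(1,1)$-form $\Theta$ satisfying $|\Theta|_{h_F,\omega_P}\leq M$ pointwise obeys $\Theta\geq_{{\rm Nak}}-M\omega_P\otimes{\rm Id}_F$ (a Cauchy--Schwarz argument on the Nakano quadratic form). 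This yields a constant $C'=C'(C_0,C_1)$ with
\begin{equation*}
\sqrt{-1}R(h_p)\otimes{\rm Id}_E - {\rm Id}_{T_{X_1^\ast}^p}\otimes\sqrt{-1}[\theta,\theta_h^\ast] \geq_{{\rm Nak}} -C'\omega_P\otimes{\rm Id}_{\sE_p},
\end{equation*}
so that adding back $N\omega_P\otimes{\rm Id}_{\sE_p}$ produces $\sqrt{-1}R(h_{\sE_p})\geq_{{\rm Nak}}(N-C')\omega_P\otimes{\rm Id}_{\sE_p}$. Taking $N_0:=C'+1$ then establishes (\ref{align_modifiedmetric}) for every $N\geq N_0$, with $N_0$ depending only on $(E,\theta,h)$ and $X$.

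The only nontrivial ingredient is the uniform boundedness of $|\theta|_{h,\omega_P}$ near $D$: this is precisely where the nilpotent tameness hypothesis is indispensable, via Theorem~\ref{thm_thetaestimate}. Without it, the curvature $R(h)$ could blow up near the boundary and the linear-in-$N$ positive contribution $N\omega_P$ would fail to dominate. Everything else is a routine Nakano comparison, and the independence of $N_0$ from ${\bf a}$ is built into the construction itself, since the ${\bf a}$-piece of $\kappa({\bf a},N)$ is pluriharmonic and contributes nothing to the curvature.
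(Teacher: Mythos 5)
Your proposal is correct and follows essentially the same route as the paper: expand the curvature of the tensor-product metric, use the self-duality equation $R(h)=-[\theta,\theta_h^\ast]$ together with the norm estimate of Theorem \ref{thm_thetaestimate} and the bound $|R(h_p)|_{h_p,\omega_P}\leq C_0$ to get a Nakano lower bound $-C'\omega_P\otimes{\rm Id}_{\sE_p}$, note that the weight $\kappa({\bf a},N)$ contributes exactly $N\omega_P$ (its ${\bf a}$-part being pluriharmonic, whence the ${\bf a}$-independence of $N_0$), and take $N_0=C'+1$. This matches the paper's argument step for step.
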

\begin{proof}
	Since $(E,\theta,h)$ is a nilpotent harmonic bundle, $|\theta|_{h,\omega_P}$ is bounded after a possible shrinking of $X_1$ (Theorem \ref{thm_thetaestimate}). This, together with the formula  $R(h)=-[\theta,\theta_h^\ast]$ (\ref{align_selfdual_equ}), implies that  $|R(h)|_{h,\omega_P}\leq C$ for some positive constant $C$. Therefore $$\sqrt{-1}R(h)\geq_{\rm Nak}-C\omega_P\otimes {\rm Id}_{E}.$$
	
	We also know from $|R(h_p)|_{h_p,\omega_P}\leq C_0$ that $\sqrt{-1}R(h_p)\geq_{\rm Nak}-C_0\omega_P\otimes {\rm Id}_{T_{X_1^\ast}^p}$.
	
	Hence
	$$\sqrt{-1}R(h_p h)\geq_{\rm Nak}-(C+C_0)\omega_P\otimes {\rm Id}_{\sE_p}.$$
	
	Modifying the metric $hh_p$  to $h({\bf a},N)h_p$  on $\sE_p$, we obtain that 
	$$\sqrt{-1}R(h_ph({\bf a},N))\geq_{\textrm{Nak}}(N-C-C_0) \omega_P\otimes I_{\sE_p}.$$
	
	Taking $N\geq N_0=C+C_0+1$, we get the desired result.
\end{proof}
\subsection{Fine bi-complex resolution}
Since $|\theta|_{h_{N}({\bf a}),\omega_P}=|\theta|_{h,\omega_P}$ is bounded by Theorem \ref{thm_thetaestimate},  $$\theta:\sD_{X,\omega_P}^{p,q}(E,h_{N}({\bf{a}}))\rightarrow \sD_{X,\omega_P}^{p+1,q}(E,h_{N}({\bf{a}}))$$ is bounded for each $0\leq p,q\leq n$.

There is therefore the decomposition
\begin{align}\label{prop_decomp}
	\sD_{X,\omega_P}^{m}(E,D'',h_{N}({\bf{a}}))=\bigoplus_{p+q=m}\sD_{X,\omega_P}^{p,q}(E,h_{N}({\bf{a}})).
\end{align}
To construct the $L^2$ bi-complex resolution, we recall Demailly's formulation of  Hormander estimate to solving $\dbar$-equations on an incomplete K\"ahler manifold that admits a complete K\"ahler metric.
\begin{thm}[\cite{Demailly1982}, Theorem 4.1]\label{thm_demaillyestimate1}
	Let $X$ be a complete K\"ahler manifold with a (possibly) incomplete K\"ahler metric $\omega$. Let $(E,h)$ be a smooth hermitian vector bundle on $X$ such that
	$$\sqrt{-1}R(h)\geq_{{\rm Nak}} \epsilon \omega\otimes {\rm Id}_{\rm E},$$
	where $\epsilon>0$ is a positive constant. For every $q\geq 1$, assume that $g\in L_{(2)}^{n,q}(X,E;\omega,h)$ such that $\dbar g=0$. Then there exists $f\in L_{(2)}^{n,q-1}(X,E;\omega,h)$ so that $\dbar f=g$ and
	$$\|f\|_{h,\omega}^2\leq \frac{1}{\epsilon}\|g\|_{h,\omega}^2.$$	
\end{thm}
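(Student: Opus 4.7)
I would prove this by the classical Bochner-Kodaira-Nakano-H\"ormander method, adapted to the possibly incomplete $\omega$ via approximation by complete K\"ahler metrics. First, on smooth compactly supported forms, the Bochner-Kodaira-Nakano identity $\Delta'' = \Delta' + [\sqrt{-1}R(h), \Lambda_\omega]$ implies, after integration by parts, the inequality
$$\|\dbar u\|^2_{h,\omega} + \|\dbar^\ast u\|^2_{h,\omega} \;\geq\; \langle [\sqrt{-1}R(h), \Lambda_\omega]u, u\rangle_{h,\omega}$$
for every $u \in C^\infty_c(X, \Lambda^{n,q}T^\ast X \otimes E)$. For $(n,q)$-forms with $q \geq 1$, the Nakano-positivity hypothesis $\sqrt{-1}R(h) \geq_{\mathrm{Nak}} \epsilon\,\omega \otimes \mathrm{Id}_E$ together with the pointwise eigenvalue computation for the curvature operator in top holomorphic degree yields $\langle [\sqrt{-1}R(h), \Lambda_\omega]u, u\rangle_{h,\omega} \geq \epsilon\,|u|^2_{h,\omega}$, producing the fundamental a priori estimate $\|u\|^2_{h,\omega} \leq \epsilon^{-1}(\|\dbar u\|^2_{h,\omega} + \|\dbar^\ast u\|^2_{h,\omega})$ on $C^\infty_c$.

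Second, I would transfer this estimate to the maximal Hilbert-space domain. Since $\omega$ may be incomplete, density of $C^\infty_c$ in the graph norm of $\dbar$ and $\dbar^\ast$ is not automatic, and the maximal closure of $\dbar^\ast$ might disagree with the formal adjoint. Let $\hat\omega$ be an auxiliary complete K\"ahler metric on $X$ (furnished by the hypothesis) and set $\omega_\delta := \omega + \delta\hat\omega$, which is K\"ahler and complete for each $\delta > 0$. By the Andreotti-Vesentini density theorem, $C^\infty_c$ is dense in the graph norm on $(X,\omega_\delta,h)$, and the a priori estimate extends to all of $\mathrm{Dom}(\dbar)\cap\mathrm{Dom}(\dbar^\ast)$ computed with respect to $\omega_\delta$. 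The crucial, and I expect hardest, technical input is the classical $(n,q)$-form pointwise monotonicity: simultaneously diagonalizing $\omega$ and $\hat\omega$ shows $|u|^2_{h,\omega_\delta}\,dV_{\omega_\delta} \leq |u|^2_{h,\omega}\,dV_\omega$ for every $(n,\ast)$-form $u$, and a parallel lower bound holds for the curvature pairing, yielding $\langle[\sqrt{-1}R(h), \Lambda_{\omega_\delta}]u,u\rangle_{h,\omega_\delta}\,dV_{\omega_\delta} \geq \epsilon\,|u|^2_{h,\omega_\delta}\,dV_{\omega_\delta}$. Consequently the a priori inequality holds on each $(X,\omega_\delta,h)$ with the same sharp constant $\epsilon^{-1}$, uniformly in $\delta$.

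Third, I would run H\"ormander's duality argument on each complete $(X,\omega_\delta,h)$. The a priori estimate forces $\ker\dbar\cap\ker\dbar^\ast = 0$ in positive degree, so the linear functional $L(\dbar^\ast v) := \langle g, v\rangle_{h,\omega_\delta}$ on $\dbar^\ast\bigl(\mathrm{Dom}(\dbar^\ast)\cap\ker\dbar\bigr)$ is well-defined and satisfies $|L(\dbar^\ast v)|\leq \epsilon^{-1/2}\|g\|_{h,\omega_\delta}\,\|\dbar^\ast v\|_{h,\omega_\delta}$. Hahn-Banach and Riesz representation then furnish $f_\delta \in L^2_{(n,q-1)}(X, E;\omega_\delta, h)$ with $\dbar f_\delta = g$ distributionally and $\|f_\delta\|^2_{h,\omega_\delta}\leq \epsilon^{-1}\|g\|^2_{h,\omega_\delta}\leq \epsilon^{-1}\|g\|^2_{h,\omega}$ (the second inequality by the $(n,q)$-form monotonicity applied to $g$). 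Finally, extracting a weakly convergent subsequence $f_{\delta_k}$ in $L^2_{\mathrm{loc}}$ as $\delta_k \to 0$ with limit $f$, and invoking the pointwise monotone convergence $|f|^2_{h,\omega_\delta}\,dV_{\omega_\delta}\nearrow |f|^2_{h,\omega}\,dV_\omega$ (valid because $\omega_\delta \searrow \omega$), I would obtain $\dbar f = g$ on $X$ together with the required norm bound $\|f\|^2_{h,\omega}\leq \epsilon^{-1}\|g\|^2_{h,\omega}$.
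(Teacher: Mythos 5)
The paper does not prove this statement: it is imported verbatim from Demailly's 1982 paper (Theorem 4.1 there), so there is no internal proof to compare against. Your reconstruction follows the architecture of Demailly's original argument --- Bochner--Kodaira--Nakano a priori estimate, regularization of the incomplete metric by $\omega_\delta=\omega+\delta\hat\omega$, the special monotonicity of $(n,q)$-form norms, H\"ormander duality on each complete $(X,\omega_\delta)$, and a weak limit as $\delta\to0$ --- and your first and third steps are sound.

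However, the pivotal claim in your second step is false as stated. You assert that $\langle[\sqrt{-1}R(h),\Lambda_{\omega_\delta}]u,u\rangle_{h,\omega_\delta}\,dV_{\omega_\delta}\geq\epsilon\,|u|^2_{h,\omega_\delta}\,dV_{\omega_\delta}$ holds with the same constant uniformly in $\delta$. Since $\omega_\delta\geq\omega$, the eigenvalues of $\sqrt{-1}R(h)$ measured against $\omega_\delta$ are \emph{smaller} than those measured against $\omega$, so the positivity constant degrades: already for $n=q=1$, $\omega_\delta=(1+\delta)\omega$ and $\sqrt{-1}R(h)=\epsilon\,\omega\otimes{\rm Id}_E$, the left-hand side equals $\frac{\epsilon}{1+\delta}|u|^2_{\omega_\delta}\,dV_{\omega_\delta}$, strictly below the right-hand side. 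Because $\hat\omega/\omega$ is in general unbounded ($\hat\omega$ complete, $\omega$ not), you cannot absorb this loss into a factor tending to $1$. The correct statement --- and the actual content of Demailly's ``lemme fondamental'' --- is the monotonicity of the \emph{inverse} pairing: for $(n,q)$-forms and a Nakano-semipositive curvature form $\Theta$, the density $\langle[\sqrt{-1}\Theta,\Lambda_\omega]^{-1}u,u\rangle_{\omega}\,dV_\omega$ is non-increasing as $\omega$ increases. It is this quantity, not the direct curvature pairing, that enters the H\"ormander duality estimate $|\langle g,v\rangle|^2\leq\left(\int\langle[\sqrt{-1}\Theta,\Lambda]^{-1}g,g\rangle\right)\cdot\|\dbar^\ast v\|^2$, and the chain $\|f_\delta\|^2_{\omega_\delta}\leq\int\langle[\sqrt{-1}\Theta,\Lambda_{\omega_\delta}]^{-1}g,g\rangle_{\omega_\delta}\,dV_{\omega_\delta}\leq\int\langle[\sqrt{-1}\Theta,\Lambda_{\omega}]^{-1}g,g\rangle_{\omega}\,dV_{\omega}\leq\epsilon^{-1}\|g\|^2_{h,\omega}$ is what yields the $\delta$-uniform bound. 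With that substitution the remainder of your argument goes through.
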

Now we are ready to prove the following proposition which gives the fine bi-complex resolution.
\begin{prop}\label{prop_localexactness1}
	Notations as in \S \ref{section_local_behavior}.
	Let $(E,h)$ be a hermitian holomorphic vector bundle on $X_1^\ast$.
	Denote by  $\omega_P=\sqrt{-1}\partial\dbar \phi_P$ the Poincar\'e metric on $X_1^\ast$ as in \S 2.2. Let $\tilde{h}:=h\cdot e^{-N\phi_P+\sum_{j=1}^l a_j {\log}|z_j|^2}$ be a modified metric on $E$ such that 
\begin{align}\label{align_positive}
	\sqrt{-1}R(\sE_p,h_p \otimes \tilde{h})\geq \omega_P\otimes {\rm Id}_{\sE_p},\quad\forall 0\leq p\leq n.
\end{align}
	Here $\sE_p=T_{X_1^\ast}^p\otimes E$ and  $h_{p}$ is the metric on $T^p_{X_1^\ast}$ induced by $\omega_P$.
	Then the complex 
	$$D_{X_1,\omega_P}^{p,0}(E,\tilde{h})\xrightarrow{\dbar} D_{X_1,\omega_P}^{p,1}(E,\tilde{h})\xrightarrow{\dbar}\cdots\xrightarrow{\dbar} D_{X_1,\omega_P}^{p,n}(E,\tilde{h})$$
	is exact at each $q\geq 1$ for every $p$.
\end{prop}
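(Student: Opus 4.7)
The plan is to reduce the claimed $\dbar$-solvability to Demailly's $L^2$-estimate (Theorem~\ref{thm_demaillyestimate1}), which is formulated only for $(n,q)$-forms. To this end, I first use the canonical isomorphism of holomorphic vector bundles
\[
\Omega^p_{X_1^\ast}\otimes E \;\cong\; \Omega^n_{X_1^\ast}\otimes \mathcal{E}_{n-p},
\]
where $\mathcal{E}_{n-p}=T^{n-p}_{X_1^\ast}\otimes E$ is exactly the bundle appearing in hypothesis~(\ref{align_positive}). This isomorphism intertwines $\dbar$, and the pointwise Hermitian pairings induced by $\omega_P$, $h_{n-p}$, and $\tilde{h}$ match up to a universal constant, so it identifies $D^{p,q}_{X_1,\omega_P}(E,\tilde{h})$ with the domain of $\dbar_{\mathrm{max}}$ inside $L^{n,q}_{(2)}(X_1^\ast,\mathcal{E}_{n-p};\omega_P,h_{\mathcal{E}_{n-p}})$.

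Before invoking Demailly I must check that $X_1^\ast$ admits \emph{some} complete K\"ahler metric, since $\omega_P$ itself fails to be complete toward the outer boundary of $\Delta^n$. This is automatic: $X_1^\ast=(\Delta^\ast)^l\times\Delta^{n-l}$ is a product of Stein manifolds and is therefore Stein, hence admits a complete K\"ahler metric. Concretely, for any strictly plurisubharmonic exhaustion $\psi$ of $X_1^\ast$, $\omega_P+\sqrt{-1}\partial\dbar\psi$ is such a metric; the $L^2$-estimates themselves are still taken with respect to the original $\omega_P$, so the auxiliary complete metric plays only a qualitative role.

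Now fix $q\geq 1$ and $g\in D^{p,q}_{X_1,\omega_P}(E,\tilde{h})$ with $\dbar g=0$. Viewing $g$ as an $(n,q)$-form valued in $\mathcal{E}_{n-p}$, hypothesis~(\ref{align_positive}) supplies the Nakano lower bound $\sqrt{-1}R(\mathcal{E}_{n-p},h_{\mathcal{E}_{n-p}})\geq_{\mathrm{Nak}}\omega_P\otimes \mathrm{Id}_{\mathcal{E}_{n-p}}$ demanded by Theorem~\ref{thm_demaillyestimate1} with $\epsilon=1$. The theorem then furnishes $f\in L^{n,q-1}_{(2)}(X_1^\ast,\mathcal{E}_{n-p};\omega_P,h_{\mathcal{E}_{n-p}})$ with $\dbar f=g$ and $\|f\|\leq \|g\|$. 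Transporting $f$ back through the isomorphism yields a $(p,q-1)$-form valued in $E$ that is $L^2$ and whose $\dbar$ equals the $L^2$ form $g$; hence it lies in $D^{p,q-1}_{X_1,\omega_P}(E,\tilde{h})$ and solves $\dbar f=g$. This is the desired exactness at level $q$.

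The only point requiring genuine attention is the completeness issue above, handled by the Stein property of $X_1^\ast$; after that, the proof is essentially a direct citation of Demailly's estimate together with the bookkeeping needed to pass between $(p,q)$- and $(n,q)$-forms.
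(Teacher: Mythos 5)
Your argument is the same as the paper's: identify $E$-valued $(p,q)$-forms with $\mathcal{E}_{n-p}$-valued $(n,q)$-forms via $\Omega^p_{X_1^\ast}\otimes E\cong\Omega^n_{X_1^\ast}\otimes\mathcal{E}_{n-p}$, invoke Demailly's Theorem \ref{thm_demaillyestimate1} with $\epsilon=1$ using hypothesis (\ref{align_positive}), and transport the solution back, so the proposal is correct and essentially identical to the paper's proof. The only (cosmetic) difference is the completeness step: the paper exhibits the complete K\"ahler metric explicitly as $\omega_P+\sqrt{-1}\partial\dbar\sum_{i=1}^n(1-|z_i|^2)^{-1}$, while you appeal to Steinness of $X_1^\ast$ — which suffices, though your parenthetical claim that $\omega_P+\sqrt{-1}\partial\dbar\psi$ is complete for \emph{any} strictly plurisubharmonic exhaustion $\psi$ is overstated (one may need to compose with a rapidly growing convex function), but this does not affect the argument.
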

\begin{proof}
	First, notice that $X^\ast_1$ admits a complete K\"ahler metric by modifying $\omega_P$ to $\omega_P+\sqrt{-1}\partial\dbar\sum_{i=1}^n (1-|z_i|^2)^{-1}$. 
	Notice that 	
	\begin{align}\label{align_formtransfer}
		L_{(2)}^{n,q}(X_1^\ast,\mathcal{E}_{n-p};\omega_P,h_{n-p}\otimes \tilde{h})\simeq L_{(2)}^{p,q}(X_1^\ast,E;\omega_P,\tilde{h})
	\end{align}
	holds for any $0\leq p,q\leq n$.
	For any $q\geq 1$, any  $g\in 	L_{(2)}^{n,q}(X_1^\ast,\mathcal{E}_{n-p};\omega_P,h_{n-p}\otimes \tilde{h})$ with $\dbar g=0$, by  Theorem \ref{thm_demaillyestimate1} and (\ref{align_positive}) there is $f\in 	L_{(2)}^{n,q-1}(X_1^\ast,\mathcal{E}_{n-p};\omega_P,h_{n-p}\otimes \tilde{h})$ so that $\dbar f=g$.
	The proposition therefore follows from (\ref{align_formtransfer}).
\end{proof}
The following corollary is a direct consequence of the above proposition.
\begin{cor}\label{cor_exactness1}
Notations as in the beginning of \S \ref{section_L2_representation}. There is a constant $N_0$, depending only on $X$ and $(E,\theta,h)$, such that complex of sheaves
	$$\sD_{X,\omega_{P}}^{p,0}(E,h_{N}({\bf{a}}))\xrightarrow{\dbar} \sD_{X,\omega_{P}}^{p,1}(E,h_{N}({\bf{a}}))\xrightarrow{\dbar} \cdots\xrightarrow{\dbar} \sD_{X,\omega_{P}}^{p,n}(E,h_{N}({\bf{a}}))$$
	is exact at each $q\geq 1$ for every $p$ and every $N>N_0$.
\end{cor}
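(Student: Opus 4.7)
The plan is to derive this sheaf-theoretic exactness from the global $L^2$-exactness on polydisks supplied by Proposition \ref{prop_localexactness1}, via a standard localization argument. Since exactness of a complex of sheaves is a stalkwise property, it suffices to fix an arbitrary point $x \in X$, choose an admissible coordinate chart $(U; z_1,\dots,z_n)$ around $x$ with $U \simeq \Delta^n$, and show that every $\dbar$-closed local section $\alpha$ of $\sD^{p,q}_{X,\omega_P}(E, h_N({\bf a}))$ with $q \geq 1$ admits a local primitive on some smaller polydisk. I would take $N_0$ to be the constant produced by Proposition \ref{prop_nakanopositivity}, which depends only on $X$ and $(E,\theta,h)$ and, crucially, not on ${\bf a}$. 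For $N > N_0$ this proposition delivers, on the chart, the Nakano positivity
$$\sqrt{-1}R(h_p \otimes h({\bf a}, N)) \geq_{\rm Nak} \omega_P \otimes {\rm Id}_{\sE_p}, \quad \forall\, 0 \leq p \leq n,$$
which is exactly the positivity hypothesis of Proposition \ref{prop_localexactness1} with $\tilde h = h({\bf a}, N)$.

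Next I would shrink $U$ to a strictly smaller polydisk $U' = \Delta^n_r \Subset U$, itself biholomorphic to $\Delta^n$. On $U'$ the local-$L^2$ condition for $\alpha$ upgrades to a global $L^2$ bound on $U' \setminus D$ with respect to $\omega_P$ and $h_N({\bf a})$. By Remark \ref{rmk_quasi} the weights $h_N({\bf a})$ and $h({\bf a}, N)$ are mutually quasi-isometric on $U'$, so $\alpha$ is also globally $L^2$ with respect to $h({\bf a}, N)$ and remains $\dbar$-closed. Proposition \ref{prop_localexactness1}, applied to $U' \simeq \Delta^n$ with $\tilde h = h({\bf a}, N)$, then produces a primitive $\beta \in L^{p,q-1}_{(2)}(U' \setminus D, E; \omega_P, h({\bf a}, N))$ with $\dbar \beta = \alpha$. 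Running the quasi-isometry in the reverse direction shows that $\beta$ defines a section of $\sD^{p,q-1}_{X,\omega_P}(E, h_N({\bf a}))$ over $U'$, supplying the required local $\dbar$-primitive and proving exactness at degree $q$.

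The main point to verify carefully is that $N_0$ really can be chosen uniformly in ${\bf a}$. This is already encoded in Proposition \ref{prop_nakanopositivity}: the parabolic correction $-\sum_i a_i \log|z_i|^2$ inside $\kappa({\bf a}, N)$ is pluriharmonic away from $D$, so it contributes nothing to the curvature of $h({\bf a}, N)$, and the curvature gain over $R(h) + R(h_p)$ is simply $N\omega_P$. Choosing $N$ large enough to dominate the $\omega_P$-bounded curvatures of $h$ (bounded via Theorem \ref{thm_thetaestimate} together with the self-dual equation) and of the auxiliary $h_p$ then yields the positivity independently of ${\bf a}$. Everything else is routine bookkeeping on $L^2$-spaces with quasi-isometric weights, so no further obstruction is anticipated.
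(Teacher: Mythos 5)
Your proposal is correct and follows essentially the same route as the paper: localize at a point via an admissible polydisk chart, use the quasi-isometry of $h_N({\bf a})$ with $h({\bf a},N)$ (Remark \ref{rmk_quasi}), invoke the Nakano positivity of Proposition \ref{prop_nakanopositivity} with its ${\bf a}$-independent $N_0$, and conclude local $\dbar$-exactness from Proposition \ref{prop_localexactness1}. The only point the paper makes slightly more explicit is that the compactness of $\overline{X}$ is what turns the chart-by-chart constants (e.g.\ the local bound on $|\theta|_{h,\omega_P}$ from Theorem \ref{thm_thetaestimate}) into a single uniform $N_0$, which your argument uses implicitly.
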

\begin{proof}
	If $x\notin D$, we can take an open neighborhood $U\subset X\backslash D$ of $x$ which is biholomorphic to a polydisk. Then the corollary follows from the usual $L^2$ Dolbeault lemma.
	It is therefore sufficient to consider an arbitrary point $x\in D$. Let $(U;z_1,\dots,z_n)$ be an admissible coordinate neighborhood which is biholomorphic to $\Delta_r^n$ for some $r\leq 1$ such that $U\backslash D\simeq(\Delta_r^\ast)^l\times \Delta_r^{n-l}$ for some positive integer $l$. 
	
	Note that $h_N({\bf a})$ is locally quasi-isometric to $\tilde{h}$ in Proposition \ref{prop_localexactness1}. 
	Choosing $N$ large enough that satisfies the conditions in Proposition \ref{prop_nakanopositivity}, the exactness is obtained from Proposition \ref{prop_localexactness1}. Notice that the compactness of $\overline{X}$ ensures the existence of the uniformed bound $N_0$.
\end{proof}
\subsection{Proof of Theorem \ref{thm_main2}}
Choosing $h_{N}({\bf{a}})$ that assures the validity of Corollary \ref{cor_exactness1}, we denote the sheaf
\begin{align*}
	(\Omega^m\otimes E)_{(2),N,{\bf a}}:={\rm Ker}\left(\sD_{X,\omega_P}^{m,0}(E,h_{N}({\bf{a}}))\xrightarrow{\dbar} \sD_{X,\omega_P}^{m,1}(E,h_{N}({\bf{a}}))\right),\quad \forall 0\leq m\leq n
\end{align*} 
and a complex of sheaves
$$Dol(E)^{\bullet}_{(2),N,{\bf a}}:=(\Omega^0\otimes E)_{(2),N,{\bf a}}\stackrel{{\theta}}{\to}(\Omega^1\otimes E)_{(2),N,{\bf a}}\stackrel{{\theta}}{\to}\cdots\stackrel{{\theta}}{\to}(\Omega^n\otimes E)_{(2),N,{\bf a}}.$$
That is, $(\Omega^m\otimes E)_{(2),N,{\bf a}}$ is the sheaf of germs of holomorphic sections $\sigma\in \Omega^m\otimes E$ which are locally square integrable with respect to  $h_{N}({\bf{a}})$ and $\omega_{P}$.

Then there exists a quasi-isomorphism
\begin{align}\label{align_bicomplex}
	\xymatrix{
		(\Omega^0\otimes E)_{(2),N,{\bf a}} \ar[r]^-{{\theta}}\ar[d]^{\dbar} & (\Omega^1\otimes E)_{(2),N,{\bf a}} \ar[r]^-{{\theta}}\ar[d]^{\dbar} &  (\Omega^2\otimes E)_{(2),N,{\bf a}} \ar[r]^-{{\theta}} \ar[d]^{\dbar} & \cdots\\
		\sD_{X,\omega_P}^{0,1}(E,h_{N}({\bf{a}}))\ar[r]^-{{\theta}}\ar[d]^{\dbar} & \sD_{X,\omega_P}^{1,1}(E,h_{N}({\bf{a}})) \ar[r]^-{{\theta}}\ar[d]^{\dbar} &
		\sD_{X,\omega_P}^{2,1}(E,h_{N}({\bf{a}}))
		\ar[r]^-{{\theta}}\ar[d]^{\dbar}&\cdots\\
		\sD_{X,\omega_P}^{0,2}(E,h_{N}({\bf{a}}))\ar[r]^-{{\theta}}\ar[d]^{\dbar} & \sD_{X,\omega_P}^{1,2}(E,h_{N}({\bf{a}})) \ar[r]^-{{\theta}}\ar[d]^{\dbar} &
		\sD_{X,\omega_P}^{2,2}(E,h_{N}({\bf{a}}))
		\ar[r]^-{{\theta}}\ar[d]^{\dbar}&\cdots\\
		\vdots&\vdots&\vdots\\
	}
\end{align}
By taking the total complex in (\ref{prop_decomp}), one gets
\begin{prop}\label{prop_quasi}
	The canonical morphism
	\begin{align}\label{align_quasiiso}
		Dol(E)^{\bullet}_{(2),N,{\bf a}}\to \sD^\bullet_{X,\omega_P}(E,D'',h_{N}({\bf{a}}))
	\end{align}
	is a quasi-isomorphism.
\end{prop}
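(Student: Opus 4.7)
The plan is a standard double-complex spectral sequence argument: the hard analytic work has already been done in Corollary \ref{cor_exactness1}, and what remains is essentially formal homological algebra.

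First I would verify that the bi-complex (\ref{align_bicomplex}) really is a bi-complex. Since $D''^2=0$, $\dbar^2=0$, and $\theta\wedge\theta=0$, one has $\dbar\theta+\theta\dbar=0$, so $\sD^{p,q}:=\sD^{p,q}_{X,\omega_P}(E,h_N({\bf a}))$ forms a first-quadrant double complex with $\dbar$ and $\theta$ anticommuting, and the total differential is $D''=\dbar+\theta$. The horizontal map
$$\theta:(\Omega^p\otimes E)_{(2),N,{\bf a}}\to(\Omega^{p+1}\otimes E)_{(2),N,{\bf a}}$$
on the top row is well defined because $\theta$ is $\dbar$-closed (so wedging with $\theta$ preserves $\dbar$-closedness), and because $|\theta|_{h,\omega_P}$ is uniformly bounded by Theorem \ref{thm_thetaestimate}, so wedging by $\theta$ preserves local square-integrability with respect to $(\omega_P,h_N({\bf a}))$. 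In particular the row complex $Dol(E)^{\bullet}_{(2),N,{\bf a}}$ makes sense and injects into the total complex in the prescribed way.

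The key input, already supplied by Corollary \ref{cor_exactness1}, is that for every $N>N_0$ each column
$$(\Omega^p\otimes E)_{(2),N,{\bf a}}\hookrightarrow \sD^{p,0}\xrightarrow{\dbar}\sD^{p,1}\xrightarrow{\dbar}\cdots$$
is a (fine) resolution. I would then consider the spectral sequence associated to the column filtration of $\mathrm{Tot}(\sD^{\bullet,\bullet})$. The $E_0$-differential is $\dbar$, so by column exactness $E_1^{p,q}=H^q_{\dbar}(\sD^{p,\bullet})$ vanishes for $q\geq 1$ and equals $(\Omega^p\otimes E)_{(2),N,{\bf a}}$ for $q=0$. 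The induced $d_1$ is $\theta$, so $E_2^{p,0}$ is the $p$-th cohomology sheaf of $Dol(E)^{\bullet}_{(2),N,{\bf a}}$ while all other $E_2$-entries vanish. The spectral sequence degenerates at $E_2$ and converges to the cohomology of the total complex, which by the decomposition (\ref{prop_decomp}) coincides with $\sD^{\bullet}_{X,\omega_P}(E,D'',h_N({\bf a}))$. This yields the quasi-isomorphism (\ref{align_quasiiso}).

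I do not anticipate any real obstacle here. All the nontrivial analytic content, namely the local $L^2$-$\dbar$-Poincar\'e lemma for the metrics $h_N({\bf a})$ (built on Demailly's H\"ormander estimate together with the Nakano positivity of Proposition \ref{prop_nakanopositivity}), has already been established. The statement being local on sheaves, no additional convergence or finiteness issue arises; one can equivalently verify quasi-isomorphism at stalks. The only two checks beyond quoting Corollary \ref{cor_exactness1} are the anticommutation of $\dbar$ and $\theta$ and the closure of the kernel sheaves under $\theta$, both of which are immediate from the harmonic bundle data.
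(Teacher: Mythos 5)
Your proposal is correct and follows essentially the same route as the paper: the paper also deduces the quasi-isomorphism from the column exactness supplied by Corollary \ref{cor_exactness1} together with the decomposition (\ref{prop_decomp}) identifying the total complex of the bi-complex (\ref{align_bicomplex}) with $\sD^\bullet_{X,\omega_P}(E,D'',h_N({\bf a}))$, the spectral-sequence (equivalently, acyclic-columns) argument being left implicit. Your additional checks (anticommutation of $\dbar$ and $\theta$, boundedness of $\theta$ making the kernel row and the decomposition well defined, and the implicit restriction $N>N_0$) are exactly the points the paper relies on in the surrounding text.
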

Before calculating $Dol(E)^{\bullet}_{(2),N,{\bf a}}$, we would like to state a simple lemma that will be used.
\begin{lem}\label{lem_finite}
	When $N>1$,	$$\int_{\Delta^\ast_{\frac{1}{2}}}|z|^r({\rm{log}}|z|)^N{\rm vol}_{\omega_P}<+\infty$$ if and only if $r>0$.
\end{lem}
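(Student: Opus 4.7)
The plan is to prove this by explicit computation in polar coordinates, since the integrand and volume form are rotationally symmetric. First I will write $z=\rho e^{i\theta}$ and express the Poincar\'e volume form on $\Delta^\ast$ as $\mathrm{vol}_{\omega_P}=\frac{d\rho\,d\theta}{2\rho(\log\rho)^2}$, which comes directly from $\sqrt{-1}\,dz\wedge d\bar z=2\rho\,d\rho\wedge d\theta$ and $(\log|z|^2)^2=4(\log\rho)^2$. Integrating out the angle $\theta$ then reduces the integral in question to
\begin{equation*}
\int_{\Delta^\ast_{1/2}}|z|^r(\log|z|)^N\,\mathrm{vol}_{\omega_P}=\pi\int_0^{1/2}\rho^{r-1}(\log\rho)^{N-2}\,d\rho.
\end{equation*}

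Next I would make the substitution $u=-\log\rho$, under which the above integral becomes a constant multiple of
\begin{equation*}
\int_{\log 2}^{\infty} u^{N-2}e^{-ur}\,du.
\end{equation*}
The integrand is bounded near $u=\log 2$, so the question of finiteness is purely about the tail as $u\to\infty$. Here the dichotomy is clean: if $r>0$, the exponential factor $e^{-ur}$ dominates any polynomial $u^{N-2}$ and the integral converges by comparison with the Gamma integral; if $r<0$ the exponential blows up and divergence is immediate.

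The only subtlety, and the one reason the hypothesis $N>1$ enters, is the boundary case $r=0$: there the integral collapses to $\int_{\log 2}^\infty u^{N-2}\,du$, which diverges precisely when $N-2\ge -1$, i.e.\ when $N\ge 1$. Thus under the assumption $N>1$, convergence forces $r>0$, and conversely $r>0$ always gives convergence. I expect no serious obstacle here; the only mild care needed is keeping track of the sign $(-1)^{N-2}$ (or working with $|\log\rho|$ throughout) so that the comparison test is applied to a nonnegative integrand.
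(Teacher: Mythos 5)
Your proposal is correct and follows essentially the same route as the paper: the polar-coordinate reduction to $\pi\int_0^{1/2}\rho^{r-1}(\log\rho)^{N-2}\,d\rho$ is exactly the paper's computation. The substitution $u=-\log\rho$ and the case analysis at $r=0$ merely spell out the convergence dichotomy that the paper asserts without further detail, so there is nothing to add.
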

\begin{proof}
	Let $z=\rho e^{i\theta}$. Then 
	\begin{align}\label{align_integral}
	\int_{\Delta^\ast_{\frac{1}{2}}}|z|^r({\rm{log}}|z|)^N {\rm vol}_{\omega_P}=\pi\int_{0}^{\frac{1}{2}}\rho^{r-1}({\rm{log}}\rho)^{N-2}d\rho .
	\end{align}
	The right hand side of (\ref{align_integral}) is finite if and only if $r>0$.
\end{proof}
The proof of Theorem \ref{thm_main2} therefore follows from the following proposition.
\begin{prop}\label{prop_prolong}
  There is a constant $N_0$, depending only on $(E,\theta,h)$, such that
  $$Dol(E)^{\bullet}_{(2),N,{\bf a}+\bm{\epsilon}}= {\rm Dol}( {_{{\bf a}}}E,\theta)$$ for every ${\bf 0}<{\bm{\epsilon}}<\bm{\sigma}_E({\bf a})$ and $N>N_0$.
\end{prop}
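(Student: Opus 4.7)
The assertion is local on $X$ and trivial away from $D$, so I would work on an admissible coordinate $(U;z_1,\dots,z_n)\simeq\Delta^n$ with $D\cap U=\bigcup_{i=1}^l\{z_i=0\}$. By Remark \ref{rmk_quasi}, $h_N(\mathbf{a}+\bm{\epsilon})$ is locally quasi-isometric to $h\cdot\prod_{j=1}^l(-\log|z_j|^2)^N\cdot\prod_{j=1}^l|z_j|^{2(a_j+\epsilon_j)}$. With respect to the Poincar\'e metric the collection $\{\frac{dz_I}{z_I}\wedge dz_J\}$, indexed by $I\subset\{1,\dots,l\}$ and $J\subset\{l+1,\dots,n\}$ with $|I|+|J|=m$, is an orthogonal frame of $\Omega^m_{U^\ast}$ satisfying $|\frac{dz_i}{z_i}|^2_{\omega_P}\sim(\log|z_i|)^2$ for $i\le l$, $|dz_k|^2_{\omega_P}\sim 1$ for $k>l$, and $\mathrm{vol}_{\omega_P}\sim\prod_{j=1}^l|z_j|^{-2}(\log|z_j|)^{-2}\,d\mu$ with $d\mu$ Lebesgue. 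Writing any holomorphic $E$-valued $m$-form on $U^\ast$ as $\alpha=\sum_{I,J}\alpha_{I,J}\frac{dz_I}{z_I}\wedge dz_J$ with $\alpha_{I,J}$ holomorphic on $U^\ast$, the local $L^2$ condition for $\alpha\in(\Omega^m\otimes E)_{(2),N,\mathbf{a}+\bm{\epsilon}}$ decouples into
\begin{align*}
\int_{U^\ast}|\alpha_{I,J}|_h^2\prod_{j=1}^l|z_j|^{2(a_j+\epsilon_j)-2}\cdot P_{I,J}(\log|z_\bullet|)\,d\mu<\infty\qquad\text{for every }I,J,
\end{align*}
where $P_{I,J}$ is a fixed polynomial in the $\log|z_j|$.

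\textbf{Forward inclusion.} If $\alpha_{I,J}\in{_{\mathbf{a}}}E$, Definition \ref{defn_prolongation} yields $|\alpha_{I,J}|_h\lesssim\prod_{j=1}^l|z_j|^{-a_j-\delta}$ for every $\delta>0$. Choosing $\delta<\min_j\epsilon_j$, in polar coordinates $|z_j|=r_j$ the integrand is dominated by $\prod_j r_j^{2(\epsilon_j-\delta)-1}$ times a polynomial in $\log r_j$, which is integrable near the origin by Lemma \ref{lem_finite}. This gives ${\rm Dol}({_{\mathbf{a}}}E,\theta)\subset Dol(E)^{\bullet}_{(2),N,\mathbf{a}+\bm{\epsilon}}$.

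\textbf{Reverse inclusion.} For the nontrivial direction I would apply Mochizuki's norm estimate (Theorem \ref{thm_tame_estimate}) to each holomorphic coefficient $\alpha_{I,J}$, decomposing it along the joint residue weight filtrations into graded pieces whose pointwise norms behave like $\prod_j|z_j|^{-b_j}\cdot(\mathrm{polylog})$ with $b_j\in\sJ_{D_j}(E)$. Since distinct graded pieces have leading $|z_j|$-exponents drawn from the discrete set $\sJ_{D_j}(E)$, they cannot cancel in the $L^2$ norm, so each piece must itself be $L^2$; Lemma \ref{lem_finite} then forces $b_j<a_j+\epsilon_j$ for every $j$. The gap condition $\epsilon_j<\sigma_{E,j}(a_j)$ ensures that $(a_j,a_j+\epsilon_j]\cap\sJ_{D_j}(E)=\varnothing$, which sharpens this to $b_j\le a_j$, i.e.\ $\alpha_{I,J}\in{_{\mathbf{a}+\bm{\epsilon}}}E={_{\mathbf{a}}}E$; the last equality is the parabolic invariance of Theorem \ref{thm_parabolic} across the gap. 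The constant $N_0$ is precisely the one supplied by Corollary \ref{cor_exactness1}, since all the integrability estimates above are uniform in $N\ge 1$. \emph{The main obstacle} is this reverse step — converting an integral $L^2$ bound into a factored pointwise asymptotic — and it is precisely Mochizuki's norm-estimate table together with the discreteness of the jumping set $\sJ_{D_j}(E)$ that makes the conversion possible.
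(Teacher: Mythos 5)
Your argument is correct and follows essentially the same route as the paper: decompose a holomorphic form into graded pieces, apply Mochizuki's two-sided norm estimate (Theorem \ref{thm_tame_estimate}), and conclude via Lemma \ref{lem_finite} together with the gap condition $\bm{\epsilon}<\bm{\sigma}_E(\bm{a})$. The one imprecision is your handling of $N_0$: it is not the constant from Corollary \ref{cor_exactness1}, and the integrability criterion is not ``uniform in $N\geq 1$'' in the form you state, because after twisting by $h_N$ the effective logarithmic exponent becomes $N+2l_i+\cdots$ with the weight-filtration integers $l_i$ possibly negative; the paper therefore takes $N_0=2\max\{|l_i|+2,|l_i-l_{i+1}|+2\}$ so that the hypothesis of Lemma \ref{lem_finite} holds for every graded piece. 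Since your gap condition already rules out the borderline exponent $a_j+\epsilon_j-b_j=0$, this does not invalidate your conclusion, but the correct dependence of $N_0$ on the weight filtration of $(E,\theta,h)$ should be made explicit.
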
	
\begin{proof}
	Notations as in Theorem \ref{thm_tame_estimate}. For every $0\leq m\leq n$, let $\tilde{v}:= e_{i_1}\wedge\cdots \wedge e_{i_m}\otimes v$ be a holomorphic section of $\Omega^m\otimes E$ such that $$0\neq [v]\in {\rm Gr}_{l_n}^{W^{(n)}}\cdots{\rm Gr}_{l_1}^{W^{(1)}}E\cap {\rm Gr}_{\bf b}E$$ for some $l_1,\dots,l_n\in\bZ$ and some  ${\bf b}=(b_1,\dots,b_l)\in\bR^l$.    Here 
	\begin{align}
		e_{j}=\begin{cases}
			\frac{dz_j}{z_j}, & j=1,\dots,l \\
			dz_j, & j=l+1,\dots,m
		\end{cases}.
	\end{align}
 Then one has
	\begin{align}
		| \tilde{v}|_{h_N(\bm{a}+\bm{\epsilon}),\omega_P}\sim |s_1|^{a_1+\epsilon_1-b_1}\cdots|s_l|^{a_l+\epsilon_l-b_l} \big|\log|s_1|\big|^{\frac{N}{2}+l_1+\delta(1)}\big|\log|s_2|\big|^{\frac{N}{2}-l_1+l_2+\delta(2)}\cdots\big|\log|s_l|\big|^{\frac{N}{2}+l_n+\delta(n)}.
	\end{align}
Here \begin{align}\label{align_orthogonal_frame}
	\delta(i)=\begin{cases}
		1, & i\in\{i_1,\dots,i_m\}\cap \{1,\dots,l\} \\
		0, & {\textrm{otherwise}}
	\end{cases}.
\end{align}
	Taking $N_0=2\max\{|l_i|+2,|l_i-l_{i+1}|+2\}$. By Lemma \ref{lem_finite}, $$\int| \tilde{v}|_{h_N(\bm{a}+\bm{\epsilon}),\omega_P}^2{\rm vol}_{\omega_P}<\infty$$
	if and only if
	\begin{align}
		a_i+\epsilon_i-b_i>0,\quad i=1,\dots,l.
	\end{align}
	This is equivalent to that $v\in{_{{\bf a}}}E$ for any ${\bf 0}<{\bm{\epsilon}}<\bm{\sigma}_E({\bf a})$. This shows the proposition.
\end{proof}
\section{Application: Kawamata-Viehweg type vanishing theorem for Higgs bundles}
Let $X$ be an pre-compact open subset of an $n$-dimensional  projective K\"ahler manifold $(M,\omega)$ such that $X$ is a weakly pseudoconvex manifold and $\psi$ is a smooth exhausted plurisubharmonic function on $X$.  For every $c\in \bR$, denote $X_c:=\{x\in X|\psi(x)<c\}$. Let $D:=\cup_{i=1}^l D_i$ be a simple normal crossing divisor on $M$ and $(E,\theta,h)$ a nilpotent harmonic bundle on $M\backslash D$. Let $\sigma_i\in H^0(M,\sO_M(D_i))$ be the defining section of $D_i$. For each $1\leq i\leq l$, fix some smooth hermitian metric $h_i$ for the line bundle $\sO_M(D_i)$ so that $|\sigma_i|_{h_i}<1$. Since $\overline{X}$ is compact, the associated curvature tensor $R(h_i)$ is bounded for every $i$. We denote $\varphi_i=|\sigma_i|_{h_i}$.
Let $L$ be a holomorphic line bundle on $M$ so that some positive multiple $mL=A+F_1+F_2$ where $A$ is an ample line bundle, $F_1$ and $F_2$ are effective divisors supported in $D$ such that $A+F_1$ is a nef holomorphic line bundle. Suppose that $F_1=\sum_{i=1}^l \alpha_i D_i$ and $F_2=\sum_{i=1}^l r_i D_i$ where $\alpha_i, r_i\in \bZ_{\geq 0},\forall i$. Denote ${\bm{\alpha}}:=(\alpha_1,\dots,\alpha_l), {\bf{r}}:=(r_1,\dots,r_l)$.

The main technique that  we use to prove Theorem \ref{thm_main1} is Deng-Hao's
$L^2$ estimate for $D''$ operators on Higgs bundles following the pattern of Hormander's original $L^2$ estimate for $\dbar$ operators. 
\begin{prop}[\cite{DF2021}, Corollary 2.7]\label{prop_estimate}
	Let $(X,\omega)$ be a complete K\"ahler manifold and $(E,\theta,h)$  any harmonic bundle on $X$. Let $L$ be a line bundle on $X$ equipped with a hermitian metric $h_L$. Assume that for some $m>0$, one has
	\begin{align}
	\langle[\sqrt{-1}R(h_L),\Lambda_{\omega}]f,f\rangle_{\omega}\geq \epsilon |f|_{\omega}^2
	\end{align}
	for any $x\in X$, any $f\in \wedge^{p,q}T_{X,x}^\ast$ and any $p+q=m$. Set $(\widetilde{E},\widetilde{\theta},\widetilde{h}):=(E\otimes L,\theta\otimes {\rm{Id}}_L,h\otimes h_L)$ and $\widetilde{D''}=\dbar_{E\otimes L}+\widetilde{\theta}$. Then for any $v\in L_{(2)}^m(X,\widetilde{E};\omega,\widetilde{h})$ such that $\widetilde{D''} v=0$, there exists $u\in L_{(2)}^{m-1}(X,\widetilde{E};\omega,\widetilde{h})$ so that $\widetilde{D''}u=v$ and
	$$\|u\|^2\leq \frac{\|v\|^2}{\epsilon}.$$
\end{prop}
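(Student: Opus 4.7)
The plan is to follow the classical Andreotti--Vesentini--H\"ormander scheme, substituting Simpson's K\"ahler identities for Higgs bundles in place of the usual ones; the self-dual equation satisfied by the harmonic bundle $(E,\theta,h)$ will be used precisely to kill the $E$-curvature contribution and leave only $R(h_L)$ on the right-hand side of the Bochner--Kodaira--Nakano inequality.

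First I would recall Simpson's K\"ahler identities: on a K\"ahler manifold $(X,\omega)$ endowed with a Higgs bundle $(E,\theta,h)$, the operators $D' = \partial_h + \theta_h^*$ and $D'' = \bar\partial_E + \theta$ satisfy $(D'')^* = \sqrt{-1}[\Lambda_\omega, D']$ and $(D')^* = -\sqrt{-1}[\Lambda_\omega, D'']$, which yield the identity $\Delta_{D''} = \Delta_{D'} + [\sqrt{-1} F(h), \Lambda_\omega]$ with $F(h) := R(h) + [\theta,\theta_h^*]$. Tensoring with $(L,h_L)$ gives
\[
F(\widetilde h) \;=\; \bigl(R(h)+[\theta,\theta_h^*]\bigr)\otimes\mathrm{Id}_L + \mathrm{Id}_E\otimes R(h_L),
\]
and the harmonic (self-dual) equation $R(h)+[\theta,\theta_h^*] = 0$ collapses this to $F(\widetilde h) = \mathrm{Id}_E\otimes R(h_L)$. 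The curvature hypothesis therefore translates into $\langle[\sqrt{-1}F(\widetilde h),\Lambda_\omega]\alpha,\alpha\rangle \geq \epsilon\|\alpha\|^2$ for every smooth $\widetilde E$-valued $m$-form $\alpha$.

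Next I would carry out the H\"ormander duality argument. Let $T = \widetilde{D''}$ denote the maximal closed extension on $L^m_{(2)}(X,\widetilde E;\omega,\widetilde h)$. For a smooth compactly supported $\alpha$, the BKN identity combined with $\langle \Delta_{\widetilde{D'}}\alpha,\alpha\rangle \geq 0$ yields
\[
\epsilon\|\alpha\|^2 \;\leq\; \langle [\sqrt{-1}F(\widetilde h),\Lambda_\omega]\alpha,\alpha\rangle \;\leq\; \|T\alpha\|^2 + \|T^*\alpha\|^2.
\]
Because $(X,\omega)$ is complete K\"ahler, the Andreotti--Vesentini cutoff trick (a sequence of compactly supported $\chi_\nu \nearrow 1$ with $\|d\chi_\nu\|_\infty\to 0$) extends this estimate to all $\alpha \in \mathrm{Dom}(T)\cap\mathrm{Dom}(T^*)$. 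Given $v\in\ker T$ and decomposing $\alpha = \alpha_1 + \alpha_2$ with $\alpha_1\in\overline{\mathrm{Im}\,T}$ and $\alpha_2\in\ker T^*$, one gets $|\langle v,\alpha\rangle|^2 = |\langle v,\alpha_1\rangle|^2 \leq \|v\|^2\|\alpha_1\|^2 \leq \epsilon^{-1}\|v\|^2\|T^*\alpha_1\|^2 = \epsilon^{-1}\|v\|^2\|T^*\alpha\|^2$; Hahn--Banach and Riesz representation then produce $u\in L^{m-1}_{(2)}$ with $Tu = v$ and $\|u\|^2 \leq \|v\|^2/\epsilon$.

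The main obstacle I anticipate is the bidegree mixing: $\widetilde{D''} = \bar\partial_{\widetilde E} + \widetilde\theta$ couples a $(0,1)$-piece with a $(1,0)$-piece, so a priori $F(\widetilde h)$ contains both the ordinary Chern curvature $R(\widetilde h)$ and the Higgs commutator $[\widetilde\theta,\widetilde\theta^*_{\widetilde h}]$, and there is no reason an estimate in terms of $R(h_L)$ alone should hold. The miraculous cancellation making this work is precisely the self-dual equation on the harmonic factor, so the harmonic (rather than merely Higgs) hypothesis is essential. Completeness of $\omega$ is the other delicate input, needed both to run the cutoff approximation and to guarantee density of smooth compactly supported forms in $\mathrm{Dom}(T)\cap\mathrm{Dom}(T^*)$ so that the a priori estimate transfers to the Hilbert-space closure.
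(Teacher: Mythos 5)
This proposition is not proved in the paper at all: it is quoted verbatim from Deng--Hao \cite{DF2021}, Corollary 2.7, and your argument --- Simpson's K\"ahler identities yielding the Bochner--Kodaira--Nakano inequality for $\widetilde{D''}$, the self-dual equation $R(h)+[\theta,\theta_h^\ast]=0$ collapsing $F(\widetilde{h})$ to $\mathrm{Id}_E\otimes R(h_L)$, and the Andreotti--Vesentini cutoff plus H\"ormander duality on the complete K\"ahler manifold --- is precisely the standard route taken in that reference. So your proposal is correct and essentially the same approach as the intended (cited) proof.
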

\subsection{Construction of the relavant metrics}
Since $A$ is ample, there exists a positive metric $h_A$ with weight $\varphi_A$ such that $\sqrt{-1}\partial\dbar \varphi_A\geq \epsilon_0 \omega$ for a certain positive constant $\epsilon_0$. Since $A+F_1$ is nef, for every positive constant $\epsilon$ there exists a metric $h_{\epsilon}$ on $A+F_1$ with weight $\varphi_{\epsilon}$ such that $\sqrt{-1}\partial\dbar \varphi_\epsilon\geq -\epsilon \omega$. Let $\varphi_{F_1}= \log |\sigma_i|_{h_i}^{2\alpha_i}$ (resp. $\varphi_{F_2}= \log |\sigma_i|_{h_i}^{2r_i}$) be the weight of a singular metric on $\sO(F_1)$ (resp. $\sO(F_2)$).

We define a singular metric $h_{\epsilon,\delta}$ on $L$ by the weight 
$$\varphi_L=\frac{1}{m}\left((1-\delta)\varphi_{\epsilon}+\delta(\varphi_A+\varphi_{F_1})+\varphi_{F_2}\right)$$
with $\epsilon\ll\delta\ll1$, $\delta$ rational. Then $\varphi_L$ has singularities along $F_1$ and $F_2$, and
\begin{align}\label{align_singularmetric}
	\sqrt{-1}\partial \dbar \varphi_L=&\frac{\sqrt{-1}}{m}\left((1-\delta)\partial \dbar \varphi_{\epsilon}+\delta \partial \dbar \varphi_A+\delta\partial\dbar \varphi_{F_1}+\partial\dbar \varphi_{F_2}\right)\\\nonumber
	\geq &\frac{1}{m}\left(-(1-\delta)\epsilon\omega+\delta( \epsilon_0\omega+[F_1])+[F_2]\right)\geq \frac{\delta}{m}\epsilon\omega
\end{align}
if we choose $\epsilon\leq\delta\epsilon_0$.
This is a singular metric which is smooth and positively curved outside $D$.

 Let $\psi_c:=\psi+\frac{1}{c-\psi}$ be a smooth exhausted plurisubharmonic function on $X_c$ and $\chi$ a convex increasing function. Choose a positive constant $N$. 
For any ${\bf{a}}=(a_1,\dots,a_l)\in \bR^l$, we modify the metric $h_{\epsilon,\delta}$ to
\begin{align}
h_{\epsilon,\delta}({\bf{a}}):=h_{\epsilon,\delta}\cdot \prod_{i=1}^l |\sigma_i|_{h_i}^{2a_i} \cdot (-\prod_{i=1}^l \log |\sigma_i|_{h_i}^2)^N
\end{align}
 and
\begin{align}
{h}_{\epsilon,\delta,c}({\bf{a}}):={h}_{\epsilon,\delta}({\bf{a}})\cdot e^{-\chi(\psi_c^2)}.
\end{align}
The associated curvatures are
\begin{align}\label{align_modifiedcurvatureterm1}
\sqrt{-1}R({h}_{\epsilon,\delta}({\bf{a}}))=&\sqrt{-1}R(h_{\epsilon,\delta})+N\sqrt{-1}\sum_{i=1}^l \frac{\partial \log |\sigma_i|_{h_i}^2\wedge \dbar \log |\sigma_i|_{h_i}^2}{(\log |\sigma_i|_{h_i}^2)^2}\\\nonumber
&+N\sqrt{-1}\sum_{i=1}^l \frac{R(h_i)}{\log |\sigma_i|_{h_i}^2}+\sqrt{-1}\sum_{i=1}^l a_i R(h_i)
\end{align}
and
\begin{align}\label{align_modifiedcurvatureterm}
\sqrt{-1}R({h}_{{\epsilon,\delta},c}({\bf{a}}))=\sqrt{-1}R({h}_{\epsilon,\delta}({\bf{a}}))+\sqrt{-1}\partial\dbar \chi(\psi_c^2).
\end{align}
Let $0< \gamma_1(x)\leq \cdot\cdot\cdot \leq \gamma_n(x)$ be the eigenvalues of $\sqrt{-1}R(h_{\epsilon,\delta})$ with respect to $\omega$ on $X\backslash D$. By the assumptions on $h_L$, $\gamma_i\geq \frac{\delta}{m}\epsilon$ holds on $X\backslash D$ for each $i$.
\begin{lem}\label{lem_modifiedmetric}
	Fixing $N$, we can pick ${\bf{a}}=(a_1,\dots,a_l)\in \bR^l$ such that {$|{\bf a}|:=\sum_{i=1}^l|a_i|$ is small enough} and rescale each $h_i$ by multiplying a small constant so that
	\begin{enumerate}
		\item \begin{align}\label{align_modifiedcurvaturepositivity1}
		\sqrt{-1}R({h}_{\epsilon,\delta}({\bf{a}}))\geq\sqrt{-1}R(h_{\epsilon,\delta}) -\frac{\delta\epsilon}{2nm}\omega\geq (1-\frac{1}{2n})\frac{\delta\epsilon}{m} \omega
		\end{align} on $X^\ast:=X\backslash D$.
		\item The metric  \begin{align}\label{align_metricconstruction}
		\omega_{N}({\bf{a}}):=\frac{\delta\epsilon}{2mn} \omega+\sqrt{-1}R({h}_{\epsilon,\delta}({\bf{a}}))
		\end{align}
		is a K\"ahler metric on $X^\ast$.
		\item \begin{align}\label{align_modifiedcurvaturepositivity}
		\sqrt{-1}R({h}_{{\epsilon,\delta},c}({\bf{a}}))\geq\sqrt{-1}R(h_{\epsilon,\delta})+\sqrt{-1}\partial\dbar \chi(\psi_c^2) -\frac{\delta\epsilon}{2mn}\omega\geq (1-\frac{1}{2n})\frac{\delta\epsilon}{m}\omega
		\end{align} on $X_c^\ast:=X_c\backslash D$.
		
		\item \begin{align}\label{align_metricconstruction2}
		\omega_{N,c}({\bf{a}}):=\omega_N ({\bf{a}})+\sqrt{-1}\partial\dbar \chi(\psi_c^2)
		\end{align}
		is a K\"ahler metric on $X_c^\ast$.
	\end{enumerate}
\end{lem}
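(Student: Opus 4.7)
The plan is to read the four assertions directly off the curvature expansion \eqref{align_modifiedcurvatureterm1}. Among the four summands of $\sqrt{-1}R(h_{\epsilon,\delta}(\mathbf{a}))$, the base term $\sqrt{-1}R(h_{\epsilon,\delta})$ is bounded below by $\frac{\delta\epsilon}{m}\omega$ by \eqref{align_singularmetric}, and the ``Poincar\'e'' term $N\sqrt{-1}\sum_{i=1}^l \partial\log|\sigma_i|_{h_i}^2 \wedge \bar\partial \log|\sigma_i|_{h_i}^2/(\log|\sigma_i|_{h_i}^2)^2$ is semi-positive. The proof therefore reduces to showing that, after the allowed adjustments, the two remaining summands $N\sqrt{-1}\sum_i R(h_i)/\log|\sigma_i|_{h_i}^2$ and $\sqrt{-1}\sum_i a_i R(h_i)$ contribute less than $\frac{\delta\epsilon}{2nm}\omega$ in absolute value.

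Since $\overline{X}$ is compact, there is a uniform constant $C>0$ with $|R(h_i)|_{\omega}\leq C$ for every $i$. The key observation is that replacing $h_i$ by $c_i h_i$ with $c_i \in (0,1]$ leaves $R(h_i)$ unchanged (a positive constant disappears under $-\partial\bar\partial\log$) but shifts $\log|\sigma_i|_{h_i}^2$ by the constant $\log c_i$; by taking each $c_i$ sufficiently small we can force $|\log|\sigma_i|_{h_i}^2|$ to be as large as desired on the pre-compact $X$, so that $\bigl|N\sum_i R(h_i)/\log|\sigma_i|_{h_i}^2\bigr|_{\omega}\leq \frac{\delta\epsilon}{4nm}$. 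Independently, restricting $|\mathbf{a}|\leq \frac{\delta\epsilon}{4nmC}$ yields $\bigl|\sqrt{-1}\sum_i a_i R(h_i)\bigr|_{\omega}\leq \frac{\delta\epsilon}{4nm}$. Dropping the non-negative Poincar\'e term and combining these bounds with the lower bound on $\sqrt{-1}R(h_{\epsilon,\delta})$ yields the two inequalities of~(1). For~(2), adding $\frac{\delta\epsilon}{2mn}\omega$ to~(1) gives $\omega_N(\mathbf{a}) \geq \frac{\delta\epsilon}{m}\omega$, and since $h_{\epsilon,\delta}(\mathbf{a})$ is smooth on $X^{\ast}:=X\backslash D$, the form $\omega_N(\mathbf{a})$ is a smooth closed positive real $(1,1)$-form there, hence a K\"ahler metric.

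For~(3) and~(4) the only additional contribution to the curvature is $\sqrt{-1}\partial\bar\partial \chi(\psi_c^2)$. After adding a constant to $\psi$ we may assume $\psi\geq 0$ on $X$, so that $\psi_c=\psi+(c-\psi)^{-1}\geq 0$ on $X_c$; then $\psi_c^2$ is plurisubharmonic as the composition of the convex increasing function $t\mapsto t^2$ on $[0,\infty)$ with the plurisubharmonic $\psi_c$, and hence $\chi(\psi_c^2)$ is plurisubharmonic because $\chi$ is convex increasing. In particular $\sqrt{-1}\partial\bar\partial \chi(\psi_c^2)\geq 0$, so~(3) follows by adding this non-negative form to the inequality in~(1), and~(4) follows analogously from~(2). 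The only subtlety we anticipate is the order of quantifiers: $N$ is fixed first, and the rescaling constants $c_i$ together with the small vector $\mathbf{a}$ are then chosen depending on $N$, $\delta$, $\epsilon$, and $C$. This is purely bookkeeping, and we expect no genuine obstacle.
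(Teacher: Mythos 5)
Your proof is correct and follows essentially the same route as the paper: you isolate the two potentially negative terms $N\sqrt{-1}\sum_i R(h_i)/\log|\sigma_i|_{h_i}^2$ and $\sqrt{-1}\sum_i a_i R(h_i)$ in the curvature expansion \eqref{align_modifiedcurvatureterm1}, kill them by rescaling the $h_i$ by small constants and shrinking $|\mathbf{a}|$, and then derive (2)--(4) from (1) together with the plurisubharmonicity of $\chi(\psi_c^2)$, exactly as in the paper's argument. The extra details you supply (uniform bound on $|R(h_i)|_\omega$, the effect of constant rescaling on $\log|\sigma_i|_{h_i}^2$, and the verification that $\chi(\psi_c^2)$ is plurisubharmonic) are consistent with, and merely flesh out, the paper's proof.
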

\begin{proof}
	Once (1) is proved, (2) follows immediately. Taking the plurisubarmonicity of $\chi(\psi_c^2)$ into account, (3) and (4) can also be obtained. Therefore it suffices to prove (1).
	
	By  (\ref{align_modifiedcurvatureterm1}), the possible negative terms should appear in $N\sqrt{-1}\sum_{i=1}^l \frac{R(h_i)}{\log |\sigma_i|_{h_i}^2}+\sqrt{-1}\sum_{i=1}^l a_i R(h_i)$. Since $\overline{X}$ is compact and $N$ is fixed, we can  pick $a_i$ small enough and perturb $h_i$ to $\epsilon_0 \cdot h_i$ where $\epsilon_0$ is a small constant so that $$N\sqrt{-1}\sum_{i=1}^l \frac{R(h_i)}{\log |\sigma_i|_{h_i}^2}+\sqrt{-1}\sum_{i=1}^l a_i R(h_i)\geq -\frac{\delta\epsilon}{2mn}\omega.$$
	
	Thus the lemma is proved.
\end{proof}
\begin{rmk}
$\omega_{N}({\bf{a}})$ is a complete K\"ahler metric on $X^\ast$ and $\omega_{N,c}({\bf{a}})$ is a complete K\"ahler metric on $X_c^\ast$. They are both locally quasi-isometric the Poincar\'e type metric.
\end{rmk}
\begin{rmk}\label{rmk_quasiisometry}
	For a harmonic bundle $(E,\theta,h)$ on $X^\ast$, the modified hermitian metric  $h_{N,c}({\bf{a}}):=h_N({\bf{a}})\cdot e^{-\chi(\psi_c^2)}$ on $E$  is  locally quasi-isometric to $h({\bf{a}},N)$ in \S 4.3.
\end{rmk}
\begin{prop}\label{prop_positivecurvature}
	With the above notations, for every $c\in\bR$ one has
	\begin{align}
\langle[\sqrt{-1}R({h}_{{\epsilon,\delta},c}({\bf a})),\Lambda_{\omega_{N,c}({\bf a})}]f,f\rangle_{\omega_{N,c}({\bf a})}\geq \frac{1}{2}|f|_{\omega_{N,c}({\bf{a}})}^2
	\end{align}
	for any $x
	\in X_c^\ast$, any $p+q>n$ and any $f\in \wedge^{p,q}T_{X_c^\ast,x}^\ast$.
\end{prop}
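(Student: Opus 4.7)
The plan is to reduce the pointwise inequality to a standard eigenvalue computation that follows from two algebraic ingredients already built into the construction of the metrics. First, unpacking the definitions gives the identity
\begin{align*}
\sqrt{-1}R(h_{\epsilon,\delta,c}({\bf a})) = \omega_{N,c}({\bf a}) - \frac{\delta\epsilon}{2mn}\omega,
\end{align*}
which follows by substituting $\omega_{N}({\bf a}) = \frac{\delta\epsilon}{2mn}\omega + \sqrt{-1}R(h_{\epsilon,\delta}({\bf a}))$ into $\omega_{N,c}({\bf a}) = \omega_N({\bf a}) + \sqrt{-1}\partial\bar\partial\chi(\psi_c^2)$ and invoking $h_{\epsilon,\delta,c}({\bf a}) = h_{\epsilon,\delta}({\bf a})e^{-\chi(\psi_c^2)}$. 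Secondly, Lemma \ref{lem_modifiedmetric}(1) combined with the positivity $\sqrt{-1}R(h_{\epsilon,\delta}) \geq \frac{\delta\epsilon}{m}\omega$ from (\ref{align_singularmetric}) gives $\omega_N({\bf a}) \geq \frac{\delta\epsilon}{m}\omega$, and since $\chi(\psi_c^2)$ is plurisubharmonic, also $\omega_{N,c}({\bf a}) \geq \frac{\delta\epsilon}{m}\omega$ on $X_c^\ast$.

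At a fixed point $x \in X_c^\ast$ I would simultaneously diagonalize $\omega$ and $\omega_{N,c}({\bf a})$: in suitable local coordinates $\omega_{N,c}({\bf a}) = \sqrt{-1}\sum_j dz_j \wedge d\bar{z}_j$ and $\omega = \sqrt{-1}\sum_j \nu_j\, dz_j \wedge d\bar{z}_j$ with $\nu_j \geq 0$. The algebraic identity above then shows that the eigenvalues of $\sqrt{-1}R(h_{\epsilon,\delta,c}({\bf a}))$ with respect to $\omega_{N,c}({\bf a})$ are $\mu_j = 1 - \frac{\delta\epsilon}{2mn}\nu_j$, and the second ingredient $\omega \leq \frac{m}{\delta\epsilon}\omega_{N,c}({\bf a})$ forces $\nu_j \leq m/(\delta\epsilon)$, whence $\mu_j \in \bigl[1-\tfrac{1}{2n},\, 1\bigr]$.

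The last step applies the standard pointwise formula for the Nakano-type commutator (Demailly's Bochner--Kodaira computation): in the basis $\{dz_J \wedge d\bar{z}_K\}_{|J|=p,|K|=q}$, the operator $[\sqrt{-1}R(h_{\epsilon,\delta,c}({\bf a})),\Lambda_{\omega_{N,c}({\bf a})}]$ is diagonal with eigenvalues
\begin{align*}
\sum_{j\in J}\mu_j + \sum_{k\in K}\mu_k - \sum_{i=1}^n\mu_i \;=\; \sum_{i \in J\cap K}\mu_i \;-\; \sum_{i \notin J\cup K}\mu_i.
\end{align*}
When $p+q > n$ one has $|J \cap K| \geq p+q-n \geq 1$ and $|(J\cup K)^c| = n - p - q + |J\cap K|$, so bounding the first sum below by $(1-\tfrac{1}{2n})|J\cap K|$ and the second sum above by $|(J\cup K)^c|$ yields a lower bound of $(p+q-n) - \frac{|J\cap K|}{2n} \geq \frac{1}{2}$ (using $|J\cap K| \leq n$). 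This is precisely the claimed inequality. The only real obstacle is bookkeeping: the coefficient $\frac{\delta\epsilon}{2mn}$ in Lemma \ref{lem_modifiedmetric} has been tuned precisely so that this factor-of-two margin survives the commutator computation, and no further analytic input is needed.
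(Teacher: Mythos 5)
Your proof is correct and takes essentially the same route as the paper: both arguments rest on the identity $\omega_{N,c}({\bf a})=\frac{\delta\epsilon}{2mn}\omega+\sqrt{-1}R(h_{\epsilon,\delta,c}({\bf a}))$ together with $\omega_{N,c}({\bf a})\geq\frac{\delta\epsilon}{m}\omega$ to place the eigenvalues of the curvature relative to $\omega_{N,c}({\bf a})$ in $\left[1-\frac{1}{2n},1\right]$, and then invoke the standard diagonal formula for $[\sqrt{-1}\Theta,\Lambda]$ acting on $(p,q)$-forms with $p+q>n$. The only cosmetic difference is the final bookkeeping (you bound $\sum_{i\in J\cap K}\mu_i-\sum_{i\notin J\cup K}\mu_i$ directly, while the paper assumes $p\geq q$ and uses $p\frac{2n-1}{2n}-(n-q)\geq\frac{1}{2}$), which gives the same constant.
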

\begin{proof}
	For any point $x\in X_c^\ast$, one can choose local coordinates $(z_1,\dots,z_n)$ around $x$ so that at $x$, $\omega=\sqrt{-1}\sum_{i=1}^n dz_i\wedge d\bar{z}_i$ and  $\sqrt{-1}R(h_{\epsilon,\delta,c}({\bf{a}}))=\sqrt{-1}\sum_{i=1}^n\tilde{\gamma}_i(x) dz_i\wedge d\bar{z}_i$  where $(1-\frac{1}{2n})\frac{\delta\epsilon}{m}\leq\tilde{\gamma}_1(x)\leq \tilde{\gamma}_2(x)\leq\cdots\leq \tilde{\gamma}_n(x)$ are the eigenvalues of $\sqrt{-1}R(h_{\epsilon,\delta,c}({\bf{a}}))$ with respect to $\omega$. Also  $\sqrt{-1}\partial\dbar \chi(\psi_c^2)=\sqrt{-1}\sum_{i=1}^n{\nu}_i(x) dz_i\wedge d\bar{z}_i$  with $0\leq {\nu}_1(x)\leq {\nu}_2(x)\leq \cdots\leq {\nu}_n(x)$ the associated eigenvalues with respect to $\omega$.
	
By (\ref{align_modifiedcurvaturepositivity}), $\tilde{\gamma}_i(x)\geq \gamma_i(x)+{\nu}_i(x)-\frac{\delta\epsilon}{2mn}$ for each $i$. Let $\lambda_1\leq \cdots \leq \lambda_n$ be the eigenvalues of $\sqrt{-1}R(h_{\epsilon,\delta,c}({\bf{a}}))$ with respect to $\omega_{N,c}({\bf{a}})$. Then $\lambda_i=\frac{\tilde{\gamma}_i}{\frac{\delta\epsilon}{2mn}+\tilde{\gamma}_i}$ by (3) and (4) in Lemma \ref{lem_modifiedmetric}. Thus $\frac{2n-1}{2n}
	\leq \lambda_i< 1$ for each $i=1,...,n$.
	
	We assume that $p\geq q$ without loss of generality. Then
	\begin{align}
	\langle[\sqrt{-1}R(h_{\epsilon,\delta,c}({\bf{a}})),\Lambda_{\omega_{N,c}({\bf{a}})}]f,f\rangle_{\omega_{N,c}({\bf{a}})}\geq& (\sum_{i=1}^p \lambda_i+\sum_{j=1}^q \lambda_j-\lambda_1-\cdot\cdot\cdot-\lambda_n)|f|_{\omega_{N,c({\bf{a}})}}^2\\\nonumber
	\geq&\left(p(\frac{2n-1}{2n})-(n-q)\right)|f|^2_{\omega_{N,c}({\bf{a}})}\\\nonumber
	\geq&\frac{1}{2}|f|^2_{\omega_{N,c}({\bf{a}})}.
	\end{align}
\end{proof}
\subsection{Fine resolution}
Denote $\widetilde{D''}:=\dbar_{E\otimes L}+\theta\otimes{\rm Id}_L$ to be the operator on $E\otimes L$.
Denote $\sD_{X,\omega_N({\bf{a}})}^\bullet (E\otimes L,\widetilde{D''},h_N({\bf{a}})\otimes h_{\epsilon,\delta})$ to be the complex 
	$$\sD^0_{X,\omega_{N}({\bf{a}})}(E\otimes L,\widetilde{D''},h_{N}({\bf{a}})\otimes h_{\epsilon,\delta})\xrightarrow{\widetilde{D''}}\sD^{1}_{X,\omega_{N}({\bf{a}})}(E\otimes L,\widetilde{D''},h_{N}({\bf{a}})\otimes h_{\epsilon,\delta})\xrightarrow{\widetilde{D''}}\cdots.$$
\begin{prop}\label{prop_quasiiso}
	Assume that $a_i-\frac{r_i}{m}\notin\sJ_{D_i}(E)$ for every $i=1,\dots,l$. Then the canonical morphism
	\begin{align}
		{\rm Dol}(_{-\frac{\bf r}{m}+{\bf a}}E,\theta)\otimes L\to \sD^\bullet_{X,\omega_{N}({\bf{a}})}(E\otimes L,\widetilde{D''},h_{N}({\bf{a}})\otimes h_{\epsilon,\delta})
	\end{align}
	is a quasi-isomorphism whenever $\delta>0$ is small enough.
\end{prop}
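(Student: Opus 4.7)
The plan is to reduce the statement to Theorem~\ref{thm_main2} applied to $E$ alone, absorbing the singular part of the metric on $L$ into a shift of the prolongation weight.

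First I would work in a holomorphic trivialization of $L$ near a point of $D$. In such a trivialization $\widetilde{D''}$ reduces to $D''$ on $E$. Because $A$ is ample and $A+F_1$ is nef, the weights $\varphi_A$ and $\varphi_\epsilon$ are smooth on the compact $M$, hence bounded on the pre-compact $X$; the only singular contribution to $\varphi_L$ comes from $\frac{1}{m}(\delta\varphi_{F_1}+\varphi_{F_2})=\sum_i \frac{\delta\alpha_i+r_i}{m}\log|\sigma_i|_{h_i}^2$. Thus $h_{\epsilon,\delta}\sim \prod_i|\sigma_i|_{h_i}^{-2(\delta\alpha_i+r_i)/m}$ up to smooth bounded factors, and $h_N({\bf a})\otimes h_{\epsilon,\delta}$, viewed via the trivialization as a metric on $E$, is locally quasi-isometric to $h_N({\bf b})$ with ${\bf b}:={\bf a}-{\bf r}/m-\delta\bm{\alpha}/m$. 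Together with the quasi-isometry $\omega_N({\bf a})\sim\omega_P$ from the remark following Lemma~\ref{lem_modifiedmetric}, this identifies, locally and up to the trivial tensoring with $L$, the $L^2$-Dolbeault complex appearing on the right-hand side of the proposition with $\sD^\bullet_{X,\omega_P}(E,D'',h_N({\bf b}))\otimes L$.

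Next I would tune the auxiliary parameters. Since $\sJ_{D_i}(E)$ is discrete and $a_i-r_i/m\notin\sJ_{D_i}(E)$ for every $i$, there is $\eta_0>0$ such that no jumping index lies in $[a_i-r_i/m-\eta_0,\,a_i-r_i/m]$. Shrink $\delta>0$ so that $\delta\alpha_i/m<\eta_0/2$, pick $\bm{\eta}$ with $0<\eta_i<\eta_0/2$, and set ${\bf c}:={\bf b}-\bm{\eta}$. Then the interval $(c_i,\,a_i-r_i/m]$ contains no jumping index, so ${_{\bf c}}E={_{{\bf a}-{\bf r}/m}}E$; also $\bm{0}<\bm{\eta}<\bm{\sigma}_E({\bf c})$. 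Theorem~\ref{thm_main2} (with $N>N_0$) now supplies a quasi-isomorphism ${\rm Dol}({_{\bf c}}E,\theta)\to\sD^\bullet_{X,\omega_P}(E,D'',h_N({\bf c}+\bm{\eta}))$; tensoring with $L$ and invoking the local identification above gives the claim.

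The main obstacle is the $F_1$-contribution: the singular term $\delta\varphi_{F_1}/m$ shifts the effective weight strictly \emph{below} ${\bf a}-{\bf r}/m$, so a naive application of Theorem~\ref{thm_main2} would return a smaller prolongation. The hypothesis $a_i-r_i/m\notin\sJ_{D_i}(E)$ supplies exactly a buffer of non-jumping indices immediately below $a_i-r_i/m$, and taking $\delta$ sufficiently small keeps the shift inside this buffer, so that ${_{\bf c}}E$ coincides with the desired ${_{{\bf a}-{\bf r}/m}}E$.
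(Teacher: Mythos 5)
Your argument is correct and follows essentially the same route as the paper: identify $h_N({\bf a})\otimes h_{\epsilon,\delta}$ with $h_N(-\frac{{\bf r}+\delta\bm{\alpha}}{m}+{\bf a})$ tensored with a smooth metric on $L$, then invoke Theorem \ref{thm_main2} after lowering the base weight into the jump-free buffer below ${\bf a}-\frac{\bf r}{m}$ guaranteed by $a_i-\frac{r_i}{m}\notin\sJ_{D_i}(E)$. One small refinement: take $\eta_0\leq\frac{1}{2}\sigma_{E,i}(a_i-\frac{r_i}{m})$ for each $i$ (not merely some $\eta_0$ with no jumping index in $[a_i-\frac{r_i}{m}-\eta_0,\,a_i-\frac{r_i}{m}]$), since otherwise a jumping index sitting just below $a_i-\frac{r_i}{m}-\eta_0$ could make $\sigma_{E,i}(c_i)$ smaller than $\eta_i$ and the hypothesis $\bm{0}<\bm{\eta}<\bm{\sigma}_E({\bf c})$ of Theorem \ref{thm_main2} would not literally hold.
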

\begin{proof}
	Since $a_i-\frac{r_i}{m}\notin\sJ_{D_i}(E)$, there exists $\epsilon_i\in \bR_{>0}$ small enough such that $a_i-\frac{r_i}{m}-\epsilon_i\notin\sJ_{D_i}(E)$. Denote $\bm{\epsilon}=(\epsilon_1,\dots,\epsilon_l)\in\bR_{>0}^l$. Thus ${\rm Dol}(_{-\frac{\bf r}{m}+{\bf a}}E,\theta)={\rm Dol}(_{-\frac{\bf r}{m}+{\bf a}-\bm{\epsilon}}E,\theta)$.

	By Theorem \ref{thm_main2}, it suffices to show that
	\begin{align}\label{align_L2_L}
	\sD^\bullet_{X,\omega_{N}({\bf{a}})}(E\otimes L,\widetilde{D''},h_{N}({\bf{a}})\otimes h_{\epsilon,\delta})\simeq \sD^\bullet_{X,\omega_{N}({\bf{a}})}(E,D'',h_{N}(-\frac{{\bf r}+\delta\bm{\alpha}}{m}+{\bf{a}}))\otimes L	
	\end{align}
when $\delta$ is chosen small enough.
    By constructions one has 
    $$h_{N}({\bf{a}})\otimes h_{\epsilon,\delta}\sim h_{N}(-\frac{{\bf r}+\delta\bm{\alpha}}{m}+{\bf{a}})\otimes h_L$$ for some smooth hermitian metric $h_L$ on $L$. Let $\beta=\beta'\otimes e_L$ be a differential form with value in $E\otimes L$, where $\beta'$ is an $E$-valued form and $e_L$ is a holomorphic local generator of $L$. Since $|e_L|_{h_L}\sim 1$ and $|\theta|_{\omega_N(\bm{a})}$ is bounded, one obtains that $\|\beta\|<\infty, \|\widetilde{D''}(\beta)\|<\infty$ if and only if 
    $\|\beta'\|<\infty, \|D''(\beta')\|<\infty$. This proves (\ref{align_L2_L}).
    
\end{proof}
\subsection{Vanishing theorem on weakly pseudoconvex manifolds}
Now we are ready to prove the following vanishing theorem.
\begin{thm}\label{thm_main_KV}
	Let $X$ be a pre-compact open subset of an $n$-dimensional  projective manifold $M$ such that $X$ is a weakly pseudoconvex manifold  and $\psi$ is a smooth exhausted plurisubharmonic function on $X$. Let $D$ be a simple normal crossing divisor on $M$ and  $(E,\theta,h)$ a nilpotent harmonic bundle on $M\backslash D$. Suppose that some positive multiple $mL=A+F_1+F_2$ where $A$ is an ample line bundle, $F_1$ and $F_2$ are effective divisors supported in $D$ such that $A+F_1$ is a nef holomorphic line bundle. Assume that $F_2=\sum_{i=1}^l r_i D_i$ where $r_i\in \bZ_{\geq 0},\forall i$. Denote ${\bf{r}}:=(r_1,\dots,r_l)$ and
denote $X_c:=\{x\in X|\psi(x)<c\}$ for every $c\in \bR$.
Then
$$\bH^i(X_c,{\rm Dol}(_{-\frac{\bf r}{m}+{\bf a}}E,\theta)\otimes L)=0$$
holds for every $c\in \bR$, every $i>n$ and every  ${\bf a}=(a_1,\dots,a_l)\in \bR^l$ such that $|a_j|<\sigma_{E,j}(-\frac{r_j}{m})$ for every $j=1,\dots,l$.
\end{thm}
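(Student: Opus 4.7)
The plan is to express $\bH^i(X_c,\mathrm{Dol}(_{-\bm r/m+\bm a}E,\theta)\otimes L)$ as the cohomology of a fine $L^2$-Dolbeault complex and then kill its cohomology in degree $i>n$ by Deng--Hao's H\"ormander-type estimate (Proposition \ref{prop_estimate}) on the complete K\"ahler manifold $X_c^\ast$.

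First I would realize the Dolbeault complex as an $L^2$ resolution. By Proposition \ref{prop_quasiiso},
$$\mathrm{Dol}(_{-\bm r/m+\bm a}E,\theta)\otimes L\;\simeq\;\sD^\bullet_{X,\omega_{N,c}(\bm a)}\bigl(E\otimes L,\widetilde{D''},h_N(\bm a)\otimes h_{\epsilon,\delta}\bigr)$$
on $X_c$, where I use $\omega_{N,c}(\bm a)$ in place of $\omega_N(\bm a)$; these two metrics are locally quasi-isometric (Remark after Lemma \ref{lem_modifiedmetric}), so the local $L^2$ conditions are unchanged. The hypothesis $|a_j|<\sigma_{E,j}(-r_j/m)$ permits the degenerate case $-r_j/m+a_j\in\sJ_{D_j}(E)$ (for instance $a_j=0$ with $-r_j/m$ a jumping index), in which Proposition \ref{prop_quasiiso} does not directly apply. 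I would handle this by perturbing $\bm a\mapsto\bm a+\bm{\epsilon}'$ with $\bm{\epsilon}'>\bm 0$ arbitrarily small: by the parabolic structure, ${_{\bm b+\bm{\epsilon}'}E}={_{\bm b}E}$ for all small $\bm{\epsilon}'>\bm 0$, so the Dolbeault complex is unchanged, while the perturbed index $-r_j/m+a_j+\epsilon'_j$ is no longer a jumping index. By Lemma \ref{lem_fine_sheaf} the $\sD^{p,q}$ are fine, hence $\bH^i(X_c,\mathrm{Dol}(_{-\bm r/m+\bm a}E,\theta)\otimes L)=H^i(\Gamma(X_c,\sD^\bullet))$.

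Next I would carry out the Andreotti--Vesentini weighting. For any $\widetilde{D''}$-closed $v\in\Gamma(X_c,\sD^i)$ with $i>n$, the form $v$ is $L^2$ on every relatively compact sublevel $X_{c'}\subset\subset X_c$. Since $\psi_c=\psi+(c-\psi)^{-1}$ exhausts $X_c$ and blows up at $\partial X_c$, choosing the convex increasing function $\chi$ to grow sufficiently rapidly makes $v$ \emph{globally} $L^2$ on $X_c^\ast$ with respect to $\omega_{N,c}(\bm a)$ and $h\otimes h_{\epsilon,\delta,c}(\bm a)$; the latter is locally quasi-isometric to $h_N(\bm a)\otimes h_{\epsilon,\delta}\cdot e^{-\chi(\psi_c^2)}$ by Remarks \ref{rmk_quasi} and \ref{rmk_quasiisometry}. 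Crucially, the curvature estimate of Proposition \ref{prop_positivecurvature} is uniform in $\chi$, so the choice of $\chi$ can be made after $v$ is given.

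Finally I would apply Proposition \ref{prop_estimate} to the harmonic bundle $(E,\theta,h)$ twisted by the line bundle $(L,h_{\epsilon,\delta,c}(\bm a))$. The metric $\omega_{N,c}(\bm a)$ is complete K\"ahler on $X_c^\ast$ by the Remark following Lemma \ref{lem_modifiedmetric}, and Proposition \ref{prop_positivecurvature} gives the positivity hypothesis with $\epsilon=1/2$ for every bidegree $(p,q)$ with $p+q>n$, hence for the whole total-degree-$i$ component of $v$. Proposition \ref{prop_estimate} then produces $u$ with $\widetilde{D''}u=v$ and $\|u\|\le\sqrt{2}\|v\|$; since $u$ is in particular locally $L^2$, it belongs to $\Gamma(X_c,\sD^{i-1})$, yielding $H^i(\Gamma(X_c,\sD^\bullet))=0$. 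The main obstacle is the simultaneous orchestration of the parameters $N$, $\delta$, $\bm{\epsilon}'$, $\chi$, and the rescaling of the $h_i$ from Lemma \ref{lem_modifiedmetric}: one must keep the curvature positivity, the completeness of $\omega_{N,c}(\bm a)$, the local quasi-isometries between the two natural metrics on $E\otimes L$, and the global $L^2$ approximation of a given closed form $v$ all mutually compatible. Once that book-keeping is in place the vanishing reduces to the standard twisted H\"ormander--Andreotti--Vesentini argument.
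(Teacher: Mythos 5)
Your proposal follows the paper's proof essentially verbatim: realize the twisted logarithmic Dolbeault complex as a fine $L^2$-Dolbeault complex via Proposition \ref{prop_quasiiso} and Lemma \ref{lem_fine_sheaf}, pass to the complete K\"ahler metric $\omega_{N,c}(\bm{a})$ with Demailly's weight $e^{-\chi(\psi_c^2)}$ chosen after the closed form is given, and solve $\widetilde{D''}u=v$ by Proposition \ref{prop_estimate} together with the curvature bound of Proposition \ref{prop_positivecurvature}. Your additional perturbation $\bm{a}\mapsto\bm{a}+\bm{\epsilon}'$ to cover the case $a_j-r_j/m\in\sJ_{D_j}(E)$ (allowed by the theorem, e.g.\ $a_j=0$ with $-r_j/m$ a jumping index, but excluded by the hypothesis of Proposition \ref{prop_quasiiso}) is correct and patches a step that the paper's own proof silently elides.
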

\begin{proof}
 	By Proposition \ref{prop_quasiiso}, there is a quasi-isomorphism 
 	$${\rm Dol}(_{-\frac{\bf r}{m}+{\bf a}}E,\theta)\otimes L\simeq_{\rm qis}\sD_{X,\omega_{N}({\bf{a}})}^\bullet (E\otimes L,\widetilde{D''},h_{N}({\bf{a}})\otimes h_{\epsilon,\delta})$$ under the assumption of the theorem.  Since $\sD_{X,\omega_N({\bf{a}})}^{p,q}(E\otimes L,\widetilde{D''},h_N({\bf{a}})\otimes h_{\epsilon,\delta})$ is a fine sheaf for each $p$ and $q$ (Lemma \ref{lem_fine_sheaf}), by (\ref{prop_decomp}) we obtain that $\sD_{X,\omega_N({\bf{a}})}^i(E\otimes L,\widetilde{D''},h_N({\bf{a}})\otimes h_{\epsilon,\delta})$ is a fine sheaf for each $i$. Notice that $\overline{X_c}$ is always compact since $\psi$ is exausted. For every $i>n$ and every $\alpha\in \sD_{X_c,\omega_{N}({\bf{a}})}^i (E\otimes L,\widetilde{D''},h_{N}({\bf{a}})\otimes h_{\epsilon,\delta})(X_c^\ast)$ such that $\widetilde{D''}\alpha=0$, it suffices to show that there exists $\beta\in \sD_{X_c,\omega_N({\bf{a}})}^{i-1}(E\otimes L,\widetilde{D''},h_N({\bf{a}})\otimes h_{\epsilon,\delta})(X_c^\ast)$ such that $\widetilde{D''}\beta=\alpha$.

 Note that $h_{N}({\bf{a}})\otimes h_{\epsilon,\delta}=h\otimes {h}_{\epsilon,\delta}({\bf{a}})$, $h_{N,c}({\bf{a}})\otimes h_{\epsilon,\delta}=h\otimes h_{\epsilon,\delta,c}({\bf{a}})$ and $\omega_{N}({\bf{a}})$ is incomplete on $X_c^\ast$. To apply Proposition \ref{prop_estimate}, we modify the metric $\omega_N({\bf{a}})$ to the complete K\"ahler metric $\omega_{N,c}({\bf{a}})$. Let  $\chi$  be a convex increasing function as in [\cite{Demailly2012}, Chapter VIII, Lemma (5.7)] such that
\begin{align}
	 \int_{X_c^\ast}|\alpha|_{\omega_{N,c}({\bf{a}}),h_{N,c}({\bf{a}})\otimes h_{\epsilon,\delta}}^2 {\rm vol}_{\omega_{N,c}({\bf a})}&\lesssim \int_{X_c^\ast}|\alpha|_{\omega_{N}({\bf{a}}),h_{N}({\bf{a}})\otimes h_{\epsilon,\delta}}^2e^{-\chi(\psi_c^2)} (1+\chi'\circ \psi_c^2+\chi''\circ \psi_c^2)^n{\rm vol}_{\omega_{N}({\bf a})}\\\nonumber &\lesssim\int_{X_c^\ast}|\alpha|_{\omega_{N}({\bf{a}}),h_N({\bf{a}})\otimes h_{\epsilon,\delta}}^2{\rm vol}_{\omega_N ({\bf{a}})}<+\infty.
\end{align}
Since $\omega_{N,c}({\bf{a}})$ is a complete K\"ahler metric on $X_c^\ast$, it follows from  Proposition \ref{prop_estimate}   and Proposition \ref{prop_positivecurvature} that there exists $\beta\in L_{(2)}^{i-1}(X_c^\ast,E\otimes L;\omega_{N,c}({\bf{a}}),h_{N,c}({\bf{a}})\otimes h_{\epsilon,\delta})(X_c^\ast)$ such that $\widetilde{D''}\beta=\alpha$. Then $\beta\in \sD_{X_c,\omega_N({\bf{a}})}^{i-1}(E\otimes L,\widetilde{D''},h_N({\bf{a}})\otimes h_{\epsilon,\delta})(X_c^\ast)$ since $\omega_N({\bf{a}})\sim\omega_{N,c}({\bf{a}})$ and $h_N({\bf{a}})\sim h_{N,c}({\bf{a})}$ locally on $X_c^\ast$. Thus the theorem is proved.	
\end{proof}
A relative version of the vanishing theorem follows from the above theorem.
\begin{thm}\label{thm_main_relative}
	Let $X$ be a  projective manifold of dimension $n$, $Y$ be an analytic space and $f:X\rightarrow Y$ be a proper surjective smooth map. Let $D=\sum_{i=1}^l D_i$ be a normal crossing divisor on $X$. Let $(E,\theta,h)$ be a nilpotent harmonic bundle on $X\backslash D$. Let $L$ be a holomorphic line bundle on $X$.
	Suppose that some positive multiple $mL=A+F_1+F_2$ where $A$ is an ample line bundle, $F_1$ and $F_2$ are effective divisors supported in $D$ such that $A+F_1$ is a nef holomorphic line bundle. Suppose that $F_2=\sum_{i=1}^l r_i D_i$ with $r_i\in \bZ_{\geq 0},\forall i$. Denote ${\bf{r}}:=(r_1,..,r_l)$.
	
	Then
	
	$$R^if_\ast({\rm Dol}(_{-\frac{\bf{r}}{m}+{\bf a}}E,\theta)\otimes L)=0$$
	for any $i>n$ and any ${\bf a}=(a_1,\dots,a_l)\in \bR^l$ such that $|a_j|<\sigma_{E,j}(-\frac{r_j}{m})$ for every $j=1,\dots,l$.
\end{thm}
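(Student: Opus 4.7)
The plan is to deduce the relative vanishing from the absolute version (Theorem \ref{thm_main_KV}) by localizing on the base $Y$, then performing an $L^2$-argument directly on the preimage of a Stein open set.

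First I observe that $R^if_*\mathcal{F}$ (with $\mathcal{F} := {\rm Dol}(_{-\frac{\bf r}{m}+{\bf a}}E,\theta)\otimes L$) is the sheaf on $Y$ associated to $U \mapsto \mathbb{H}^i(f^{-1}(U), \mathcal{F})$, so it suffices to show $\mathbb{H}^i(f^{-1}(U), \mathcal{F}) = 0$ for $U$ in a basis of open subsets of $Y$. I would take $U$ to be Stein and relatively compact in $Y$. By properness of $f$, $V := f^{-1}(U)$ is relatively compact in $X$, and the pullback $\psi := f^*\psi_U$ of a smooth strictly plurisubharmonic exhaustion $\psi_U$ on the Stein space $U$ is a smooth plurisubharmonic exhaustion function on $V$. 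Thus $V$ is a pre-compact weakly pseudoconvex open subset of the projective manifold $X$, and Theorem \ref{thm_main_KV} (applied with $M = X$) yields
\[\mathbb{H}^i(V_c, \mathcal{F}) = 0, \quad \forall c \in \mathbb{R}, \; i > n,\]
where $V_c := \{x \in V \mid \psi(x) < c\}$.

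It then remains to promote these vanishing statements on the exhausting family $\{V_c\}$ to the vanishing of $\mathbb{H}^i(V, \mathcal{F})$ itself. Rather than argue via an abstract inverse limit (which would require a Mittag--Leffler input in degree $i = n + 1$), I would adapt the $L^2$-argument of Theorem \ref{thm_main_KV} so that the solution is produced globally on $V$. Using the fine $L^2$-resolution of Proposition \ref{prop_quasiiso} together with Lemma \ref{lem_fine_sheaf}, the hypercohomology $\mathbb{H}^i(V, \mathcal{F})$ equals the cohomology of the complex of global sections of $\sD^\bullet_{X,\omega_N({\bf a})}(E\otimes L, \widetilde{D''}, h_N({\bf a})\otimes h_{\epsilon,\delta})$ over $V$. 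Given a $\widetilde{D''}$-closed global section $\alpha$ of degree $i > n$, I would replace the cutoff $\psi_c = \psi + 1/(c - \psi)$ used in the proof of Theorem \ref{thm_main_KV} by $\chi(\psi)$ for a convex increasing function $\chi$ growing sufficiently fast at infinity, following Demailly's standard trick (\cite[Chapter VIII, Lemma 5.7]{Demailly2012}). The modified metric $\omega_V := \omega_N({\bf a}) + \sqrt{-1}\ddbar\chi(\psi)$ is then a complete K\"ahler metric on $V^\ast := V \setminus D$, and $\chi$ can be arranged so that $\alpha$ becomes globally $L^2$ on $V^\ast$ with respect to the correspondingly modified hermitian metric on $E \otimes L$, while plurisubharmonicity of $\chi(\psi)$ preserves the Nakano-type curvature positivity of Proposition \ref{prop_positivecurvature}. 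Proposition \ref{prop_estimate} then produces a global $L^2$ solution $\beta$ to $\widetilde{D''}\beta = \alpha$ on $V^\ast$, and local quasi-isometry between the modified and unmodified metrics ensures that $\beta$ defines a section of $\sD^{i-1}$ over $V$.

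The main technical obstacle is the simultaneous calibration of the convex function $\chi$: fast enough to render $\omega_V$ complete and $\alpha$ globally $L^2$ on $V^\ast$, yet compatible with the Nakano-type positivity required by Proposition \ref{prop_estimate}. This is precisely the classical Demailly paradigm on weakly pseudoconvex manifolds, and its verification in the present parabolic Higgs bundle setting runs in parallel to the construction already executed in the proof of Theorem \ref{thm_main_KV}, so no essentially new estimate is required.
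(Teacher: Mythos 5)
Your reduction to Theorem \ref{thm_main_KV} via the weak pseudoconvexity of preimages of Stein sets is exactly the paper's, but you part ways at the final globalization step, and your route is heavier than it needs to be. The paper never proves $\bH^i(f^{-1}(U),\mathcal{F})=0$ for a whole Stein open $U$ (where $\mathcal{F}={\rm Dol}(_{-\frac{\bf r}{m}+{\bf a}}E,\theta)\otimes L$); it only computes stalks. Choosing $\psi\geq 0$ strictly plurisubharmonic on a Stein local model of $Y$ with $\psi^{-1}(0)=\{y\}$, the sublevel sets $Y_d=\{\psi<d\}$ form a cofinal neighborhood basis of $y$, and Theorem \ref{thm_main_KV}, applied to the weakly pseudoconvex precompact manifold $f^{-1}Y_c$ with exhaustion $f^*\psi$, gives $\bH^i(f^{-1}Y_d,\mathcal{F})=0$ for all $0<d<c$ and $i>n$. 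Since the stalk $R^if_*(\mathcal{F})_y=\varinjlim_{d\to 0}\bH^i(f^{-1}Y_d,\mathcal{F})$ is a filtered \emph{colimit} over shrinking neighborhoods --- not an inverse limit over an exhaustion --- no Mittag--Leffler input is needed and the vanishing of the sheaf is immediate; the difficulty you flag simply does not arise in this formulation. Your alternative, re-running the $L^2$ machinery on all of $V=f^{-1}(U)$ with a weight $e^{-\chi(\psi)}$ calibrated so that a given $\widetilde{D''}$-closed section becomes globally square-integrable while $\sqrt{-1}\partial\bar\partial\chi(\psi)$ preserves completeness and the curvature estimate of Proposition \ref{prop_positivecurvature}, is correct --- it is essentially the proof of Theorem \ref{thm_main_KV} repeated with $V$ and $\chi(\psi)$ in place of $X_c$ and $\chi(\psi_c^2)$ --- and it buys the slightly stronger conclusion $\bH^i(f^{-1}(U),\mathcal{F})=0$ for Stein $U$; but for the stated vanishing of $R^if_*\mathcal{F}$ the soft colimit argument already suffices and avoids any recalibration of the convex weight.
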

\begin{proof}
	Since the problem is local, we may assume that $Y$ admits a non-negative smooth exhausted strictly plurisubharmonic function $\psi$ so that $\psi^{-1}\{0\}=\{y\}\subset Y$ is a point. To achieve this one may embed $Y$ into $\bC^N$ as a closed Stein analytic subspace and take $\psi=\sum_{i=1}^N\|z_i\|^2$. Since $f$ is proper, $f^\ast\psi$ is a smooth exhausted plurisubharmonic function on $X$.
	
    Fixing $c\in \bR_{>0}$, $f^{-1}Y_c$ is a weakly pseudoconvex manifold with $f^\ast \psi$ a smooth exhausted plurisubharmonic function on $f^{-1}Y_c$ and $\overline{f^{-1}Y_c}$ is compact. It follows from Theorem \ref{thm_main_KV}  that
	\begin{align}
	H^i(f^{-1}Y_d,\textrm{Dol}(_{-\frac{\bf{r}}{m}+{\bf a}}E,\theta)\otimes L)=0,\quad \forall i>n\quad \textrm{and}\quad\forall 0<d<c.
	\end{align}
 Taking $d\to 0$ we acquire that
	\begin{align}
	R^if_\ast(\textrm{Dol}(_{-\frac{\bf{r}}{m}+{\bf a}}E,\theta)\otimes L)_y=0,\quad \forall i>n.
	\end{align}
	This proves the theorem.
\end{proof}
\begin{rmk}
	By \cite{Mochizuki2009}, a parabolic stable higgs bundle admits a tame harmonic metric such that the parabolic structure coincide with the Simpson-Mochizuki's parabolic structure. Therefore Theorem \ref{thm_main_relative} implies Theorem \ref{thm_main1}.
\end{rmk}
\subsection{Generalization of Suh's vanishing theorem}
In this subsection we generalize Suh's Kawamata-Viehweg type vanishing theorem with coefficients for the Deligne extension of a variation of Hodge structure (\cite[Theorem 1]{Suh2018}). This generalization has two sides: one is that we treat polarized complex variations of Hodge structure rather than $\bQ$-polarized  or $\bR$-polarized variations of Hodge structure; the other is that we treat other prolongations so that we obtain the vanishing theorems for line bundles which may not be nef and big.
\begin{defn}
	Let $M$ be a complex manifold. A polarized complex variation of Hodge structure on $M$ of weight $k$ is a harmonic bundle $(\cV,\nabla,h)$ on $M$ which consists of a holomorphic flat bundle $(\cV,\nabla)$ and a harmonic metric, together with a decomposition $\cV\otimes_{\sO_M}\sA^0_{M}=\bigoplus_{p+q=k}\cV^{p,q}$ of $C^\infty$ bundles such that
	\begin{enumerate}
		\item The Griffiths transversality condition 
		\begin{align}\label{align_Griffiths transversality}
			\nabla(\cV^{p,q})\subset \sA^{0,1}_M(\cV^{p+1,q-1})\oplus \sA^{0,1}_M(\cV^{p,q})\oplus \sA^{1,0}_M(\cV^{p,q})\oplus \sA^{1,0}_M(V^{p-1,q+1})
		\end{align}
		holds for every $p$ and $q$. Here $\sA^{i,j}_M(\cV^{p,q})$ (resp. $\sA^{k}_M(\cV^{p,q})$) denotes the sheaf of $C^\infty$ $(i,j)$-forms (resp. $k$-forms) with values in $\cV^{p,q}$.
		\item The hermitian form $Q$ which equals $(-1)^{p}h$ on $\cV^{p,q}$ is parallel with respect to $\nabla$.
	\end{enumerate}
\end{defn}
We have a decomposition 
\begin{align}
	\nabla=\overline{\theta}+\dbar+\partial+\theta
\end{align}
according to  (\ref{align_Griffiths transversality}).

Let us fix some notations.
For every $p$, $F^p:=\oplus_{i\geq p}\cV^{i,k-i}\cap{\rm Ker}(\dbar+\overline{\theta})$ is a holomorphic subbundle which satisfies the Griffiths transversality condition that 
$\nabla(F^p)\subset F^{p-1}\otimes\Omega_M$ for every $p$. In literatures one usually regards $(\cV,\nabla,\{F^p\},h)$ as a polarized $\bC$-variation of Hodge structure. 

Due to (\ref{align_Griffiths transversality}), the graded quotient $({\rm Gr}_{F}\cV,{\rm Gr}_F \nabla)$ is a Higgs bundle in the sense of Simpson \cite[\S 8]{Simpson1988}. Denote $H:={\rm Ker}(\dbar:\cV\to\sA^1_M(\cV))$ and $H^{p,q}:=H\cap\cV^{p,q}$. Then $(H\simeq\oplus_{p+q=k}H^{p,q},\theta)$ is a Higgs bundle which is canonically isomorphic to $({\rm Gr}_{F}\cV,{\rm Gr}_F \nabla)$. Hence $({\rm Gr}_{F}\cV,{\rm Gr}_F \nabla)$ is related to $(\cV,\nabla)$ under Simpson's correspondence between flat connections and semistable Higgs bundles with vanishing Chern classes (\cite[\S 4]{Simpson1992}).
\begin{rmk}
	The Higgs bundle associated to a polarized complex variation of Hodge structure is always nilpotent.
\end{rmk}
Let $X$ be a complex manifold and $D=\cup_{i=1}^l D_i $ a simple normal crossing divisor on $X$. Let $(\cV,\nabla,\{F^p\}_{p\in\bZ})$ be a polarizable variation of Hodge structure of weight $k$.
For every ${\bm{a}}=(a_1,\dots,a_l)\in\bR^l$, let $\cV_{\geq\bm{a}}$ be the unique locally free $\sO_X$-module extending $\cV$ such that $\nabla$ induces a connection with logarithmic singularities 
$\nabla:\cV_{\geq{\bm{a}}}\to\cV_{\geq{\bm{a}}}\otimes\Omega_X(\log D)$ whose real parts of the eigenvalues of the residue of $\nabla$ along $D_i$ belong to $[{{a}}_i,{{a}}_i+1)$. Let $j:X\backslash D\to X$ be the open immersion.
Denote $F^p_{\geq{\bm{a}}}:=j_\ast F^p\cap\cV_{\geq{\bm{a}}}$.
The recent work of Deng \cite{Deng2022} shows the following theorem, which generalizes the nilpotent orbit theorem of Schmid \cite{Schmid1973} and Cattani-Kaplan-Schmid \cite{Cattani_Kaplan_Schmid1986}.
\begin{thm}\label{thm_degeneration_CVHS}
	For every ${\bm{a}}\in\bR^l$ and every $p\in\bZ$,
	$F_{\geq{\bm{a}}}^p$ is a subbundle of $\cV_{\geq{\bm{a}}}$ and $$F_{\geq{\bm{a}}}^p/F_{\geq{\bm{a}}}^{p+1}\simeq {_{{\bm{a}}}}H^{p,k-p},$$ 
	where ${_{{\bm{a}}}}H^{p,k-p}$ is Mochizuki's prolongation (a locally free $\sO_X$-modules) of the Hodge bundle $H^{p,k-p}:= F^p/F^{p+1}$.
	The connection $\nabla$ admits logarithmic singularities on $F_{\geq{\bm{a}}}^p$, i.e., $\nabla:F^p\to F^{p-1}\otimes\Omega_{X\backslash D}$ extends to 
	$$\nabla:F_{\geq{\bm{a}}}^p\to F_{\geq{\bm{a}}}^{p-1}\otimes\Omega_{X}(\log D).$$
\end{thm}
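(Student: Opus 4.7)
The plan is to reduce all three assertions to the parabolic structure of the Higgs bundle $(\{_{\bm{a}}E\},\theta)$ (Theorem \ref{thm_parabolic}) via a careful comparison of the Deligne extension $\cV_{\geq \bm{a}}$ with the analytic Simpson--Mochizuki prolongation. First I would identify $\cV_{\geq \bm{a}}$ with the prolongation at weight $\bm{a}$ of the $C^\infty$-bundle $\cV$ equipped with the holomorphic structure $\dbar+\overline{\theta}$ and the harmonic metric $h$. The input is Theorem \ref{thm_tame_estimate}: a multi-valued flat section whose monodromy eigenvalue has real part in $[a_i,a_i+1)$ along $D_i$ has $h$-norm of order $\prod|z_i|^{-a_i}$ up to polylogarithmic factors, which matches Definition \ref{defn_prolongation}. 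Once this identification is in place, $j_{*}F^{p}\cap \cV_{\geq \bm{a}}$ becomes the $\sO_X$-module of holomorphic sections of $F^{p}$ with $h$-norm $\lesssim \prod|z_i|^{-a_i-\epsilon}$ for every $\epsilon>0$, i.e., the analytic prolongation of $F^{p}$ computed with the restriction of $h$.

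The central step is to show that the orthogonal projection $F^{p}\to H^{p,k-p}$ onto the top Hodge summand induces an isomorphism on graded prolongations, $F^{p}_{\geq \bm{a}}/F^{p+1}_{\geq \bm{a}}\simeq {_{\bm{a}}}H^{p,k-p}$. A holomorphic section $s\in F^{p}$ decomposes smoothly as $s=\sum_{q\geq p}s_q$ with $s_q\in \cV^{q,k-q}$; since $h$ is orthogonal on the Hodge decomposition, $|s|_h^2=\sum_{q\geq p}|s_q|_h^2$ pointwise. To upgrade this to the level of prolongations, one must show that, asymptotically near $D$, the leading order of $|s|_h$ is captured by its top component $s_p\in H^{p,k-p}$ modulo contributions from $F^{p+1}$. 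This follows from Theorem \ref{thm_tame_estimate} applied to the nilpotent harmonic bundle $E=\bigoplus_{q}H^{q,k-q}$, combined with the compatibility of the parabolic filtration with the relative weight filtrations $W^{(i)}$ of the residues of $\nabla$ along each $D_i$. The subbundle property of $F^{p}_{\geq \bm{a}}$ in $\cV_{\geq \bm{a}}$ then follows by descending induction on $p$ (the case $F^{N}=0$ for $N\gg 0$ being trivial), using the local freeness of $_{\bm{a}}H^{p,k-p}$ given by Theorem \ref{thm_parabolic}.

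Finally, for the logarithmic extension of $\nabla$ on $F^{p}_{\geq \bm{a}}$: Griffiths transversality (\ref{align_Griffiths transversality}) gives $\nabla(F^{p})\subset F^{p-1}\otimes\Omega^{1}_{X\setminus D}$, so only the pole order along $D$ must be controlled. Writing $\nabla=\overline{\theta}+\dbar+\partial+\theta$ and passing to the associated graded, the $\theta$-summand becomes the logarithmic Higgs field on $_{\bm{a}}E$, with pole bounds from Theorem \ref{thm_parabolic}; the Chern-connection pieces $\dbar+\partial$ extend with at most logarithmic poles thanks to the asymptotic curvature estimate underlying Proposition \ref{prop_nakanopositivity}. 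Combining this with the subbundle property produces the desired logarithmic extension $\nabla:F^{p}_{\geq \bm{a}}\to F^{p-1}_{\geq \bm{a}}\otimes\Omega_X(\log D)$. The main obstacle is the norm comparison in the second paragraph: showing that projection onto the top Hodge piece preserves $h$-norms up to bounded and $F^{p+1}$-contributions requires a several-variable SL$_2$-orbit type asymptotic analysis extending the classical nilpotent-orbit theorems of Schmid and Cattani--Kaplan--Schmid to the polarized complex variation setting, where no underlying real or rational structure is available; this is precisely the technical core of Deng's argument.
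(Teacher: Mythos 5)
The first thing to say is that the paper does not prove this statement at all: Theorem \ref{thm_degeneration_CVHS} is quoted verbatim as a result of Deng \cite{Deng2022}, presented as a generalization of the nilpotent orbit theorems of Schmid and Cattani--Kaplan--Schmid to polarized \emph{complex} variations of Hodge structure. So there is no in-paper argument to compare yours against; the only question is whether your sketch would stand on its own as a proof. It would not, and you essentially say so yourself. The entire content of the theorem is concentrated in the step you defer in your last sentence: showing that a holomorphic section of $F^{p}$ has the same $h$-norm asymptotics as its image in $H^{p,k-p}$ modulo $F^{p+1}$, uniformly in several variables and without any underlying real or rational structure to run the classical $\mathrm{SL}_2$-orbit machinery on. Declaring that this ``requires a several-variable $\mathrm{SL}_2$-orbit type asymptotic analysis\ldots{} precisely the technical core of Deng's argument'' is an accurate diagnosis, but it means the proposal is an outline with the theorem's actual content left as a black box. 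Theorem \ref{thm_tame_estimate} alone does not suffice here: it gives norm asymptotics for flat multivalued sections graded by the residue weight filtrations of $(\cV,\nabla)$, whereas what you need is a comparison between the Hodge filtration $F^{\bullet}$ and the parabolic filtration of the associated Higgs bundle near $D$, which is exactly the strictness/compatibility statement being proved.

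A second, more local problem: your justification of the logarithmic extension of $\nabla$ on $F^{p}_{\geq\bm{a}}$ via ``the asymptotic curvature estimate underlying Proposition \ref{prop_nakanopositivity}'' is off-target. That proposition is a Nakano-positivity statement for an auxiliary twisted metric, used later for the $L^{2}$ existence theorems; it says nothing about pole orders of the Chern connection on a prolonged lattice. The fact that $\nabla$ has at most logarithmic poles on $\cV_{\geq\bm{a}}$ is built into the definition of the Deligne lattice; what actually requires proof is that $\nabla$ maps $F^{p}_{\geq\bm{a}}$ into $F^{p-1}_{\geq\bm{a}}\otimes\Omega_{X}(\log D)$, i.e., that Griffiths transversality persists on the extensions. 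This is a consequence of the subbundle/strictness statement (the isomorphism $F^{p}_{\geq\bm{a}}/F^{p+1}_{\geq\bm{a}}\simeq {_{\bm{a}}}H^{p,k-p}$ together with the fact that the graded Higgs field is logarithmic on the prolongation, Theorem \ref{thm_parabolic}), not of any curvature bound. If you intend to reconstruct Deng's proof, the argument has to flow in that direction: first the norm comparison, then the subbundle property, and only then the logarithmic transversality as a corollary.
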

Thus one has a logarithmic de Rham complex
$$\cV_{\geq{\bm{a}}}\stackrel{\nabla}{\to} \cV_{\geq{\bm{a}}}\otimes\Omega_{X}(\log D)\stackrel{\nabla}{\to}\cV_{\geq{\bm{a}}}\otimes\Omega^2_{X}(\log D)\stackrel{\nabla}{\to}\cdots$$
and subcomplexes
$$F^{p}_{\geq{\bm{a}}}\stackrel{\nabla}{\to} F^{p-1}_{\geq{\bm{a}}}\otimes\Omega_{X}(\log D)\stackrel{\nabla}{\to}F^{p-2}_{\geq{\bm{a}}}\otimes\Omega^2_{X}(\log D)\stackrel{\nabla}{\to}\cdots,\quad\forall p.$$
Denote the graded quotient complex as $$_{\bf a}{\rm DR}_{(X,D)}(\cV,F):=\bigoplus_p F_{\geq{\bm{a}}}^p/F_{\geq{\bm{a}}}^{p+1}\stackrel{{\rm Gr}\nabla}{\to}\bigoplus_p F_{\geq{\bm{a}}}^{p-1}/F_{\geq{\bm{a}}}^{p}\otimes\Omega_{X}(\log D)\stackrel{{\rm Gr}\nabla}{\to}\cdots.$$
By Theorem \ref{thm_degeneration_CVHS} we have
\begin{align}
_{\bf a}{\rm DR}_{(X,D)}(\cV,F)\simeq{\rm Dol}({_{\bm{a}}}H,\theta)
\end{align}
where $(H,\theta,h)$ is the Higgs bundle associated to $(\cV,\nabla,h)$. Applying Theorem \ref{thm_main1} we obtain the following vanishing result.
\begin{thm}\label{thm_vanishing_VHS}
	Let $X$ be a  projective manifold of dimension $n$, $Y$ be an analytic space and $f:X\rightarrow Y$ be a proper surjective smooth map. Let $D=\sum_{i=1}^l D_i$ be a normal crossing divisor on $X$. Let $(\cV,\nabla,\{\cV^{p,q}\}_{p+q=k},h)$ be a polarizable variation of Hodge structure of weight $k$ on $X\backslash D$. Let $L$ be a holomorphic line bundle on $X$.
	Suppose that some positive multiple $mL=A+F_1+F_2$ where $A$ is an ample line bundle, $F_1$ and $F_2$ are effective divisors supported in $D$ such that $A+F_1$ is a nef holomorphic line bundle. Suppose that $F_2=\sum_{i=1}^l r_i D_i$ where $r_i\in \bZ_{\geq 0},\forall i$. Denote ${\bf{r}}:=(r_1,\dots,r_l)$.
	
	Then
	$$R^if_\ast\left(_{-\frac{\bf{r}}{m}+{\bf a}}{\rm DR}_{(X,D)}(\cV,F)\otimes L\right)=0$$
	for any $i>n$ and any ${\bf a}=(a_1,\dots,a_l)\in \bR^l$ such that $|a_j|<\sigma_{E,j}(-\frac{r_j}{m})$ for every $j=1,\dots,l$.
\end{thm}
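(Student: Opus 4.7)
The plan is to reduce Theorem \ref{thm_vanishing_VHS} to the Higgs-bundle vanishing statement of Theorem \ref{thm_main1} by identifying the graded de Rham complex with the logarithmic Dolbeault complex of an associated nilpotent Higgs bundle. First, I would apply Simpson's correspondence to the polarized complex variation of Hodge structure $(\cV,\nabla,\{\cV^{p,q}\},h)$: the decomposition $\nabla=\overline{\theta}+\dbar+\partial+\theta$ together with $H:=\mathrm{Ker}(\dbar:\cV\to\sA^1_M(\cV))$ and $H^{p,q}:=H\cap\cV^{p,q}$ produces the Higgs bundle $(H\simeq\bigoplus_{p+q=k}H^{p,q},\theta)$ canonically isomorphic to the graded Higgs bundle $(\mathrm{Gr}_F\cV,\mathrm{Gr}_F\nabla)$. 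Since the given structure is a polarized complex VHS, the associated Higgs bundle is tame and its residues are nilpotent, so $(H,\theta,h)$ is a nilpotent harmonic bundle on $X\backslash D$ in the sense of Definition \ref{defn_tamenilpotent}.

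Next I would invoke Deng's degeneration theorem (Theorem \ref{thm_degeneration_CVHS}) to identify the graded pieces of the Hodge filtration on the prolongation $\cV_{\geq\bm{a}}$ with Mochizuki's prolongation of the Hodge bundles, namely $F^p_{\geq\bm{a}}/F^{p+1}_{\geq\bm{a}}\simeq {_{\bm{a}}}H^{p,k-p}$, together with the compatibility $\mathrm{Gr}\,\nabla\simeq\theta$ on these graded pieces. Chasing through the definition of the graded quotient complex $_{\bm{a}}{\rm DR}_{(X,D)}(\cV,F)$ term by term, this yields the quasi-isomorphism of complexes of $\sO_X$-modules
\begin{equation*}
{_{\bm{a}}}{\rm DR}_{(X,D)}(\cV,F)\;\simeq\;{\rm Dol}({_{\bm{a}}}H,\theta),\qquad\forall\,\bm{a}\in\bR^l,
\end{equation*}
compatible with tensoring by the line bundle $L$. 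This identification is essentially already asserted in the discussion preceding the theorem; I would just verify that the boundary maps match strictly (not merely up to isomorphism of graded objects) under the comparison $(\mathrm{Gr}_F\cV,\mathrm{Gr}_F\nabla)\simeq(H,\theta)$.

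Finally, under this identification, and noting that the jumping-index invariants $\sigma_{E,j}(-r_j/m)$ appearing in Theorem \ref{thm_main1} for the Higgs bundle $E=H$ coincide with those appearing in the statement of Theorem \ref{thm_vanishing_VHS}, the vanishing
\begin{equation*}
R^if_\ast\bigl({_{-\tfrac{\bm{r}}{m}+\bm{a}}}{\rm DR}_{(X,D)}(\cV,F)\otimes L\bigr)\;=\;R^if_\ast\bigl({\rm Dol}({_{-\tfrac{\bm{r}}{m}+\bm{a}}}H,\theta)\otimes L\bigr)\;=\;0
\end{equation*}
for $i>n$ and $|a_j|<\sigma_{E,j}(-r_j/m)$ is exactly the conclusion of Theorem \ref{thm_main_relative} applied to the nilpotent harmonic bundle $(H,\theta,h)$.

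I do not anticipate a serious obstacle: all the analytic work is packaged in Theorem \ref{thm_main1}, and the only substantive input beyond Simpson's correspondence is Deng's recent degeneration theorem, which is invoked as a black box. The only point requiring mild care is checking that the identification of graded complexes is compatible with the differentials at the level of complexes (not just of graded bundles), so that taking hypercohomology and the direct image functor commutes on both sides with the comparison.
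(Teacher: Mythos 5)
Your proposal matches the paper's own argument: the paper likewise passes to the associated nilpotent Higgs bundle $(H,\theta,h)$ via Simpson's correspondence, uses Deng's Theorem \ref{thm_degeneration_CVHS} to obtain the identification $_{\bf a}{\rm DR}_{(X,D)}(\cV,F)\simeq{\rm Dol}({_{\bm{a}}}H,\theta)$, and then concludes by the relative vanishing theorem for nilpotent harmonic bundles (Theorem \ref{thm_main_relative}, equivalently Theorem \ref{thm_main1}). Your added remarks about matching the differentials and the jumping indices are sensible checks but do not change the route; the argument is correct as proposed.
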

To consider the canonical extension of $\cV$, let us take ${\bm{a}}=(0,\dots,0)$. In this case $_{\bf{0}}{\rm DR}_{(X,D)}(\cV,F)$ is the logarithmic de Rham complex associated with the canonical extension of $(\cV,F)$ considered in \cite{Suh2018}. Taking $Y={\rm pt}$, $F_2=0$ and ${\bm{a}}=(0,\dots,0)$, Theorem \ref{thm_vanishing_VHS} implies Suh's vanishing theorem (\cite[Theorem 1]{Suh2018}).

\bibliographystyle{plain}
\bibliography{suh}

\end{document}